\let\epsilon\varepsilon
\let\phi\varphi
\let\theta\vartheta
\newtheorem{mytheorem}{Theorem}[section]
\newtheorem{myprop}[mytheorem]{Proposition}
\newtheorem{mycor}[mytheorem]{Corollary} 
\theoremstyle{definition}
\newtheorem{mydef}[mytheorem]{Definition}
\newtheorem{myre}[mytheorem]{Remark}
\newtheorem{mylemma}[mytheorem]{Lemma}
\newcommand{\C}{\mathbb{C}}
\newcommand{\R}{\mathbb{R}}
\newcommand{\TS}{\textstyle}
\newcommand{\N}{\mathbb{N}}
\newcommand{\cS}{\mathcal{S}}
\newcommand{\cM}{\mathcal{M}}
\newcommand{\cK}{\mathcal{K}}
\newcommand{\cW}{\mathcal{W}}
\newcommand{\va}{\vec{a}}
\newcommand{\vp}{\vec{p}}
\newcommand{\cb}{\mathbf{b}}
\newcommand{\ce}{\mathbf{e}}
\newcommand{\cn}{\mathbf{n}}
\newcommand{\ca}{\mathbf{a}}
\newcommand{\cs}{\mathbf{s}}
\newcommand{\cc}{\mathbf{c}}
\newcommand{\cu}{\mathbf{u}}
\newcommand{\cv}{\mathbf{v}}
\newcommand{\cy}{\mathbf{y}}
\newcommand{\dx}{\,\mathrm{d}\mathbf{x}}
\newcommand{\dy}{\,\mathrm{d}\mathbf{y}}
\newcommand{\cw}{\mathbf{w}}
\newcommand{\cx}{\mathbf{x}}
\newcommand{\cz}{\mathbf{z}}
\newcommand{\Span}{\operatorname{span}}
 \newcommand{\bN}{\mathbb{N}}
\newcommand{\norm}[1]{\left\lVert #1\right\rVert}
\newcommand{\bR}{\mathbb{R}}
\newcommand{\bZ}{\mathbb{Z}}
\newcommand{\supp}{\operatorname{supp}}
\newcommand{\qp}{\mathcal{Q}}
\newcommand{\Dict}{\mathcal{D}}
\newcommand{\K}{\mathcal{K}}
\newcommand{\ptt}{\mathbb{I}}
\newcommand{\brac}[1]{\langle #1\rangle_{\vec{a}}}
\newcommand{\ve}{\varepsilon}
\renewcommand{\epsilon}{\varepsilon}
\numberwithin{equation}{section}
\begin{document}
\title[Painless Construction of Unconditional Bases]{Painless Construction of Unconditional Bases for Anisotropic Modulation and Triebel-Lizorkin Type Spaces }  
\author{Morten Nielsen }  
\address{Department of Mathematical Sciences\\ Aalborg
  University\\ Thomas Manns Vej 23\\ DK-9220 Aalborg East\\ Denmark}
\email{mnielsen@math.aau.dk}
\begin{abstract}
We construct smooth localized orthonormal bases compatible with anisotropic  Triebel-Lizorkin and Besov type spaces on $\bR^d$. The construction is based on tensor products of so-called univariate brushlet functions that are based on local trigonometric bases in the frequency domain, and the construction is painless in the sense that all parameters  for the construction are explicitly specified. It is shown that the associated decomposition system form unconditional bases for the full family of Triebel–Lizorkin and Besov type spaces, including for the so-called $\alpha$-modulation and $\alpha$-Triebel-Lizorkin  spaces. In the second part of the paper we study nonlinear $m$-term approximation with the constructed bases, where direct Jackson and Bernstein inequalities for $m$-term approximation with the tensor brushlet system in $\alpha$-modulation and $\alpha$-Triebel-Lizorkin  spaces are derived. The inverse Bernstein estimates rely heavily on the fact that the constructed system is non-redundant.

\end{abstract}
\subjclass{42B35, 42C15, 42C40}
\keywords{Unconditional basis, local trigonomatric basis, brushlet basis, smoothness space, Besov space, $\alpha$-modulation space, Triebel-Lizorkin space, $\alpha$-Triebel-Lizorkin space, nonlinear approximation, Jackson inequality, Bernstein inequality, approximation space}
\maketitle

\section{Introduction}
Well-localized unconditional bases for function spaces defined on $\bR^d$ play a cental role for many applications as the bases offer a framework for discretisation of a wide variety of continuous problems relevant for, e.g., numerical analysis and mathematical modeling.  One classical example is the discretisation of Calder\'on–Zygmund operators using a smooth wavelet orthonormal basis, see \cite{Meyer1997}. A well-localized basis often also  supports a simple characterization of smoothness in terms of certain sparseness conditions relative to expansions in the basis. For example, smoothness measured on the Besov scale can be shown to corresponds to a certain sparseness condition on a corresponding orthonormal wavelet expansion \cite{Meyer1992}. Moreover, such norm characterizations often also allow us to make a natural connection to $m$-term nonlinear approximation using the basis by identifying certain smoothness spaces as nonlinear approximation spaces \cite{Garrigos2004,Gribonval2004,Kyriazis2006,Kyriazis2002}. As a consequence we may gain specific knowledge on how to compress smooth functions using sparse representations of the functions in the unconditional basis \cite{DeVore1992,DeVore1992a}.

From the point of view of mathematical modeling, it is desirable to have as flexible tools as possible as it allows one to incorporate more  refined structure of various real-world phenomena in the model. 
Function spaces in anisotropic and mixed-norm  settings have therefore attached considerable interest recently, see for example \cite{MR3377120,MR3707993,MR2720206,MR2186983} and references therein. 

However, when it comes to construction of  unconditional bases, a serious design challenge is to make the basis compatible with a desired anisotropic structure. One illustrative such example is the challenge of designing orthonormal wavelet bases for an arbitrary dilation matrix, see, e.g., \cite{bownik_construction_2001,MR2250142}. An addition design challenge comes into play when one also desires that the constructed basis should  be able to capture smoothness relative to some predefined notion of smoothness. Often this constraint translates to a specific requirement on the  time-frequency structure of the basis. For example, Besov spaces are constructed using a dyadic decomposition of the frequency space, while the family of modulation spaces introduced by Feichtinger \cite{Feichtinger2003} correspond to a uniform decomposition of the frequency space. So for characterization of Besov spaces one needs a wavelet-type basis with a dyadic frequency structure in order to characterize smoothness, while for modulation spaces one needs a Gabor type system compatible with a uniform decomposition of the frequency space. 

 In the present paper we address some of the mentioned challenges by proposing a general procedure for   
construction of a large and flexible family of   tensor product orthonormal bases for $L_2(\bR^d)$ that can be adapted to be compatible with an arbitrary anisotropy on $\bR^d$.  We will show that the construction can easily be adapted to provide  unconditional bases for a large and diverse family of smoothness spaces constructed by the so-called decomposition method.

The theory of decomposition spaces \cite{Feichtinger1987,Feichtinger1985} provides a very general framework for 
the study of function spaces that can be adapted to almost any anisotropic setting, and the special case of  decomposition spaces in  the frequency domain \cite{Borup2008}, provide a very flexible model for smoothness that allows for a unified study of classical smoothness spaces such as modulation spaces, (aniotropic) Besov spaces, and (aniotropic)  Triebel-Lizorkin spaces.
In this paper we focus on the family of $\alpha$-modulation and $\alpha$-Triebel–Lizorkin spaces. These two families of smoothness spaces depend on a parameter $\alpha\in [0,1]$ that governs the geometric time-frequency structure of the spaces. For $\alpha=0$, we obtain the modulation spaces, while for $\alpha=1$ we obtain anisotropic Besov and Triebel-Lizorkin spaces, respectively.  For $\alpha\in(0,1)$, we obtain spaces corresponding to a certain ``polynomial'' type decomposition of the time-frequency space. The reader may consult \cite{Borup2008,AlJawahri2019,MR2295293,MR2396847,MR4358697,MR4082240} for further discussion on  $\alpha$-modulation type spaces.

The main contribution of the present paper is to offer an easy explicit
construction of a family of  universal tensor product orthonormal bases for $L_2(\bR^d)$, based on so-called univariate brushlet systems. The univariate brushlets are incredibly flexible orthonormal function systems based on local Fourier bases as introduced by Coifman and Meyer  \cite{MR1089710}, and by Malvar  \cite{56057} for applications in signal processing. These systems were further developed by Wickerhauser \cite{MR1209260}.  Laeng pointed out in \cite{MR1081623} that it is possible to map a local Fourier basis by the Fourier transform to a new basis with compact support in the frequency domain. In \cite{Meyer1997a}, Coifman and F. G. Meyer studied similar systems, coining the term bruslets, using the bases introduced by Wickerhauser.

Let us give an overview of the contributions from our study.

\begin{enumerate}
\item The construction of the tensor brushlet orthonormal bases is ``painless'' in the sense that all parameters for the construction are completely specified for any given  anisotropy, $\alpha$-paramter, and dimension $d\geq 2$.  The construction is presented in Section \ref{sec:bases}. 
\item The construction provides the first example of an orthonormal system extending to universal unconditional bases for the full scale of anisotropic $\alpha$-modulation and $\alpha$-Triebel-Lizorkin type spaces on $\bR^d$, regardless of the dimension $d\geq 2$.

    \item The basis supports a full characterization of the norm in anisotropic $\alpha$-modulation and $\alpha$-Triebel-Lizorkin  spaces. The characterizations are proved in Section \ref{s:onb}.
    \item The non-redundancy of the basis implies that inverse Bernstein estimates for nonlinear $m$-term approximation with the brushlet system can be obtained with approximation error measured in  anisotropic $\alpha$-modulation and $\alpha$-Triebel-Lizorkin   spaces. The approximation results are presented in Section \ref{s:approx}.
\end{enumerate}

\subsubsection*{Notation} Throughout the article, positive constants, denoted by $c$, $c'$ or similar, may vary at every occurrence. For constants where  a dependence on parameters $q_1, q_2, \ldots,q_K,$ is essential for the context, it will be stated as  $c:=c(q_1,q_2,\ldots,q_K)$. We will use the notation $A\asymp B$ to indicate that the quantities $A$ and $B$ are equivalent in the sense that there exists constants $c,c'\in (0,\infty)$ such that $cA\leq B\leq c'A$.

\section{The anisotropic setup and  preliminaries}

Let us introduce the anisotropic structure on $\bR^d$ that will be used in the construction of tensor product brushlet bases and for defining Besov and Triebel-Lizorkin type spaces associated with various polynomial type decompositions of the frequency space $\bR^d$. 

Let $\cb,\cx\in\mathbb{R}^d$ and $t>0.$ We denote by $t^{\cb}\cx:=(t^{b_1}x_1,\dots,t^{b_d}x_d)^\top.$ We fix an anisotropy $\vec{a}\in [1,\infty)^d$, and associate the anisotropic quasi-norm $|\cdot|_{\vec{a}}$ as follows: We put $|0|_{\vec{a}}:=0$ and for $\cx\neq0$ we set $|\cx|_{\vec{a}}:=t_0,$ there $t_0$ is the unique positive number such that $|t_0^{-\vec{a}}\cx|=1.$ One observes immediately that
\begin{equation}\label{ad1}
|t^{\vec{a}}\cx|_{\vec{a}}=t|\cx|_{\vec{a}},\;\;\text{for every}\;\;\cx\in\mathbb{R}^d,\;t>0.
\end{equation}
From this we notice that $|\cdot|_{\vec{a}}$ is not a norm unless  $\vec{a}=(1,\dots,1)^\top$, where it is equivalent to the Euclidean norm $|\cdot|$.
Suppose  $\va=(a_1,\ldots,a_d)^\top$. Then one may verify that, uniformly for $\cx\in\bR^d$,
\begin{equation}\label{eq:equiv_norm}
|\cx|_{\vec{a}}\asymp \sum_{j=1}^d |x_j|^{1/a_j}\asymp \max_{j}|x_j|^{1/a_j}.
\end{equation}
The latter quantity will play an important role in the various constructions below, so for notational convenience, we also define 
\begin{equation}\label{eq:def_norminf}
|\cx|_{\vec{a},\infty}:=\max_{j}|x_j|^{1/a_j},\qquad \cx\in\bR^d.
\end{equation}

The anisotropic distance can be directly compared to the Euclidean norm, see e.g.\ \cite{Bagby:1975uc,Bownik2006}, in the sense that there are  constants $c,c'>0$ such that for every $\cx\in\mathbb{R}^d$, 
\begin{equation}\label{ad8}
c(1+|\cx|_{\vec{a}})^{\gamma_m}\le1+|\cx|\le c'(1+|\cx|_{\vec{a}})^{\gamma_M},
\end{equation}
where we denoted $\gamma_m:=\min_{1\le j\le d}a_j,\;\gamma_M:=\max_{1\le j\le d}a_j.$
 Furthermore, we define the homogeneous dimension by 
\begin{equation}\label{nu}
  \nu:=|\vec{a}|:=a_1+\cdots+a_d,
  \end{equation}
where we notice that we always have $\nu\geq d$ since $\vec{a}\in [1,\infty)^d$.
We will need the following anisotropic bracket. We consider $(1,\vec{a})\in\bR^{d+1}$ and define
$$\brac{\cx}:=|(1,\cx)|_{(1,\vec{a})},\qquad \cx\in\bR^d.$$ 
This quantity has been studied in detail in \cite{Borup2008,Stein1978}.
One may verify that the following equivalence holds  
\begin{equation}\label{bracket}
  \brac{\cx}\asymp  1+|\cx|_{\vec{a}},\qquad \cx\in\bR^d.
  \end{equation}
One can  also show that there is a constant $c_1>0$ such that
\begin{equation}\label{brac_esti}
  \brac{\cx+\cy}\leq c_1 \brac{\cx} \brac{\cy},\qquad \cx,\cy\in\bR^d.
  \end{equation}
For $\cx\in\bR^d$ and $r>0$, we denote by $$B_{\vec{a}}(\cx,r):=\{\cy\in\bR^d:\;|\cx-\cy|_{\vec{a}}<r\},$$ the anisotropic ball of radius $r$, centered at $\cx$. Notice that $B_{\vec{a}}(\cx,r)$ is convex and $|B_{\vec{a}}(\cx,r)|=|B_{\vec{a}}(0,1)|r^\nu$. Similarly, we define the rectangle centered at $\cx$,
$$R_{\vec{a}}(\cx,r):=\{\cy\in\bR^d:\;|\cx-\cy|_{\vec{a},\infty}<r\}.$$
We notice from \eqref{eq:equiv_norm} that there exist $0<c_2\leq c_3$ such that for any $\cx\in\bR^d$, $r>0$,
$$B_{\vec{a}}(\cx,c_2r)\subseteq R_{\vec{a}}(\cx,r)\subseteq B_{\vec{a}}(\cx,c_3r).$$
Finally, we  mention that by changing to polar coordinates, one can easily show that for $\tau>\nu$,
\begin{equation}\label{brac_int}
	\int_{\bR^d} \brac{\cx}^{-\tau}\,\dx\le c_{\tau}<\infty.
\end{equation}
\section{Anisotropic tensor product brushlet orthonormal bases}\label{sec:bases}
In this section we present the main construction of tensor brushlet bases adapted to the given anisotropy $\va$. With a view towards the known  time-frequency structure of $\alpha$-modulation spaces, we will impose  a ``polynomial'' type decomposition structure in the frequency domain. 
We will call on a number of known properties of univariate brushlet systems for the construction, and for the benefit of the reader, we have included a review of these properties in Appendix~\ref{app:brush}.

In the special case $d=2$, two constructions of orthonormal bases compatible with $\alpha$-modu{-}lation spaces have been considered previously. In the purely isotropic case, there is a construction by the  author \cite{Nielsen2010}, and Rasmussen considered the anisotropic case  
\cite{Rasmussen2012}. The construction by Rasmussen is based on a rather involved iterative argument and therefore cannot easily be extended to higher dimensions. The construction presented below places no restriction on $d\geq 2$.

\subsection{Decomposition of the frequency space $\bR^d$} We first consider the following subsets of the real axis, with endpoints that are compatible with standard univariate $\alpha$-coverings, see, e.g., \cite{Borup2006b}.
Fix a parameter $\beta\geq 1$ to be determined later. We first focus on creating a suitably calibrated subdivision of the the ``corridors'' defined for $j\geq 1$ by
$$K_j:=\{\cx\in\bR^d: j^{\beta}\leq |\cx|_{\vec{a},\infty}<(j+1)^{\beta}\}.$$
Suppose that $\va=(a_1,\ldots,a_d)^\top$ and recall that $\nu:=a_1+\cdots+a_d$. For $j\geq 1$, and $i\in\{1,\ldots, d\}$, we let
\begin{equation}\label{eq:Aj}
A_j^i:=[-(j+1)^{a_i\beta},(j+1)^{a_i\beta}),\qquad B_j^i:= \big[-j^{a_i\beta} ,j^{a_i\beta} \big).
\end{equation}
Let us first consider 
\begin{equation}\label{eq:Bj}
A_j^i\backslash B_j^i=\big[-(j+1)^{a_i\beta} ,-j^{a_i\beta} \big)\cup 
\big[j^{a_i\beta} ,(j+1)^{a_i\beta} \big).
\end{equation}
 For $t\in \bR$, we let $\lceil t\rceil$ denote the least integer greater than or equal to $t$.
 We  subdivide $(A_j^i\backslash B_j^i)\cap [0,\infty)$ into $\lceil j^{ a_i-1}\rceil$
 half-open intervals $I_{j,k}^{i,O}=[\alpha_{j,k}^{i,L},\alpha_{j,k}^{i,R})$ of equal length, 
 and similarly, 
 we subdivide $(A_j^i\backslash B_j^i)\cap (-\infty,0]$ 
by the intervals $I_{j,-k}^{i,O}:=\big[-\alpha_{j,k}^{i,R},-\alpha_{j,k}^{i,L}\big)$, $k=1,\ldots,\lceil j^{ a_i-1}\rceil$. 
We obtain the partition
$$
A_j^i\backslash B_j^i=I_{j,-\lceil j^{ a_i-1}\rceil}^{i,O}\cup \cdots\cup I_{j,-1}^{i,O}
\cup 
I_{j,1}^{i,O}\cup \cdots\cup I_{j,\lceil j^{ a_i-1}\rceil}^{i,O}.$$
We then subdivide $B_j^i$ into $\lceil j^{ a_i}\rceil$
 half-open intervals $I_{j,k}^{i,I}$, $k=1,\ldots, \lceil j^{ a_i}\rceil$,
 where we notice 
that $$|I_{j,k}^{i,I}|\asymp j^{\beta a_i}/j^{a_i}=j^{a_i(\beta-1)},$$ uniformly in $j$ and $k\in \{1,\ldots,\lceil j^{ a_i}\rceil\}$. This provides a partition of $A_j^i$,
\begin{equation}\label{eq:partt}
A_j^i = 
I_{j,-\lceil j^{ a_i-1}\rceil}^{i,O}\cup \cdots\cup I_{j,-1}^{i,O}\cup
I_{j,1}^{i,I}\cup
\cdots \cup I_{j,\lceil j^{ a_i}\rceil}^{i,I}\cup I_{j,1}^{i,O}\cup \cdots\cup I_{j,\lceil j^{ a_i-1}\rceil}^{i,O},
\end{equation}
where each of the $O(j^{ a_i})$ half-open intervals in the partition has length $\asymp j^{a_i(\beta-1)}$.
Let us define 
$$\mathcal{A}_j^i=\{I_{j,\pm k}^{i,O}: k=1,\ldots,\lceil j^{ a_i}\rceil\}
\cup \{I_{j,k}^{i,I}: k=1,\ldots, \lceil j^{ a_i}\rceil\},$$
and the corresponding product set
$$\mathcal{A}_j:=\mathcal{A}_j^1\times \mathcal{A}_j^2\times\cdots\times \mathcal{A}_j^d.$$
Similarly, we define $\mathcal{B}_j^i\subset \mathcal{A}_j^i$ by
$$\mathcal{B}_j^i= \{I_{j,k}^{i,I}: k=1,\ldots, \lceil j^{ a_i}\rceil\},$$
and the corresponding product set $\mathcal{B}_j\subset \mathcal{A}_j$,
$$\mathcal{B}_j:=\mathcal{B}_j^1\times \mathcal{B}_j^2\times\cdots\times \mathcal{B}_j^d.$$
We put $\mathcal{K}_j:=\mathcal{A}_j\backslash \mathcal{B}_j$ and 
 fix some ordering of the $|\mathcal{K}_j|$ rectangles in $\mathcal{K}_j$, say, \begin{equation}\label{eq:Kj}
    \mathcal{K}_j=\big\{R^j_1,R_2^j,\ldots, R^j_{|\mathcal{K}_j|}\big\}.
\end{equation}
For  notational convenience, we let $\mathcal{K}_0:=\{R^0_1\}$, with $R_1^0:=[-1,1)^d$. We also notice that the chosen orderings from \eqref{eq:Kj} naturally induce an ordering on the full collection
\begin{equation}\label{eq:K}
\mathcal{K}:=\mathcal{K}_0\cup \mathcal{K}_1\cup \mathcal{K}_2\cup\cdots.
\end{equation}

Next, we notice from Eqs.\ \eqref{eq:Aj}, \eqref{eq:Bj}, and \eqref{eq:partt} that the rectangles in $\mathcal{K}_j$ in fact form a partition of $K_j$,
$$K_j=\bigcup_{R^j_k\in \mathcal{K}_j} R^j_k.$$
 
For $j\geq 1$ and $k\in \{1,\ldots, |\mathcal{K}_j|\}$, we let $\cc_k^j\in\bR^d$ denote the center of the rectangle $R^j_k\in\mathcal{K}_j$, i.e., if $R^j_k=I_1\times\cdots \times I_d$ with $I_j=[a_i,b_i)$ then $\cc_k^j:=\big(\frac{b_1-a_1}{2},\ldots,\frac{b_d-a_d}{2}\big)$. We define an associated affine transform by 
\begin{equation}\label{eq:affine}
 T_k^i(\cdot):=j^{(\beta-1)\vec{a}} \cdot+\cc_k^j,
   \end{equation}
Again for notational convenience, we  let $T_1^0:=\text{Id}_{\bR^d}$, and consequently, $\cc_1^0=0$.
An illustration of the construction is given in Figure \ref{fig:1}, while some of the needed geometric properties of the construction  are derived in Lemma \ref{le:cut} below.

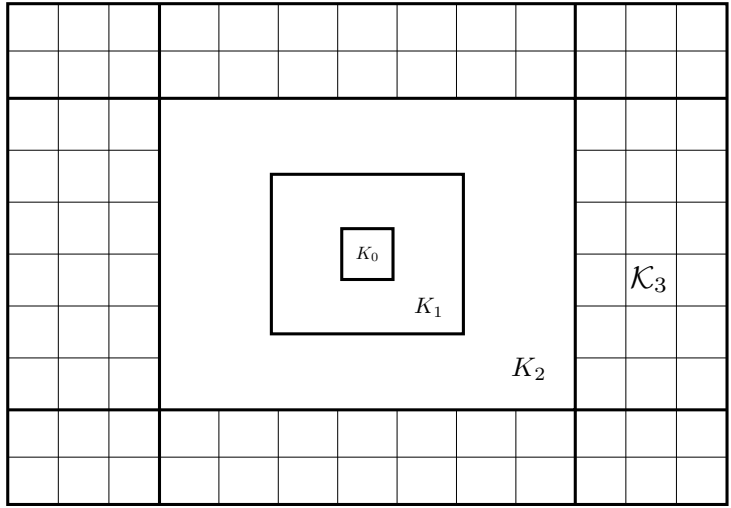
\begin{figure}[ht]
\resizebox{9.5cm}{!}{%
\begin{tikzpicture}
  \draw[line width=1.2mm] (-1,1) -- (1,1) -- (1,-1) -- (-1,-1) -- cycle ;
  \draw[line width=1.2mm] 
  (3.745753310, 3.138336392) -- (-3.745753310, 3.138336392) -- (-3.745753310, -3.138336392) -- (3.745753310, -3.138336392) --cycle;
  \draw[line width=1.2mm] 
  (8.110321686, 6.127030896)--(-8.110321686, 6.127030896)--(-8.110321686, -6.127030896)--(8.110321686, -6.127030896)-- cycle; 
\draw[line width=1.2mm] (14.03066786, 9.849155307) -- (-14.03066786, 9.849155307) --(-14.03066786, -9.849155307) --(14.03066786, -9.849155307) --cycle; 

\draw (8.110321686, -6.127030896+2.042343632) -- (14.03066786, -6.127030896+2.042343632);
\draw (8.110321686, -6.127030896+2*2.042343632) -- (14.03066786, -6.127030896+2*2.042343632);
\draw (8.110321686, -6.127030896+3*2.042343632) -- (14.03066786, -6.127030896+3*2.042343632);
\draw (8.110321686, -6.127030896+4*2.042343632) -- (14.03066786, -6.127030896+4*2.042343632);
\draw (8.110321686, -6.127030896+5*2.042343632) -- (14.03066786, -6.127030896+5*2.042343632);
\draw (-8.110321686, -6.127030896+2.042343632) -- (-14.03066786, -6.127030896+2.042343632);
\draw (-8.110321686, -6.127030896+2*2.042343632) -- (-14.03066786, -6.127030896+2*2.042343632);
\draw (-8.110321686, -6.127030896+3*2.042343632) -- (-14.03066786, -6.127030896+3*2.042343632);
\draw (-8.110321686, -6.127030896+4*2.042343632) -- (-14.03066786, -6.127030896+4*2.042343632);
\draw (-8.110321686, -6.127030896+5*2.042343632) -- (-14.03066786, -6.127030896+5*2.042343632);

\draw (-8.110321686+2.317234767, 6.127030896) -- (-8.110321686+2.317234767, 9.849155307);
\draw (-8.110321686+2.317234767, -6.127030896) -- (-8.110321686+2.317234767, -9.849155307);

\draw (-8.110321686+2*2.317234767, 6.127030896) -- (-8.110321686+2*2.317234767, 9.849155307);
\draw (-8.110321686+2*2.317234767, -6.127030896) -- (-8.110321686+2*2.317234767, -9.849155307);

\draw (-8.110321686+3*2.317234767, 6.127030896) -- (-8.110321686+3*2.317234767, 9.849155307);
\draw (-8.110321686+3*2.317234767, -6.127030896) -- (-8.110321686+3*2.317234767, -9.849155307);

\draw (-8.110321686+4*2.317234767, 6.127030896) -- (-8.110321686+4*2.317234767, 9.849155307);
\draw (-8.110321686+4*2.317234767, -6.127030896) -- (-8.110321686+4*2.317234767, -9.849155307);
\draw (-8.110321686+5*2.317234767, 6.127030896) -- (-8.110321686+5*2.317234767, 9.849155307);
\draw (-8.110321686+5*2.317234767, -6.127030896) -- (-8.110321686+5*2.317234767, -9.849155307);

\draw (-8.110321686+6*2.317234767, 6.127030896) -- (-8.110321686+6*2.317234767, 9.849155307);
\draw (-8.110321686+6*2.317234767, -6.127030896) -- (-8.110321686+6*2.317234767, -9.849155307);
  
  \draw[line width=1.2mm] (8.110321686, 6.127030896) -- (14.03066786,6.127030896);
    \draw[line width=1.2mm] (8.110321686, -6.127030896) -- (14.03066786,-6.127030896);
      \draw[line width=1.2mm] (-8.110321686, 6.127030896) -- (-14.03066786,6.127030896);
        \draw[line width=1.2mm] (-8.110321686, -6.127030896) -- (-14.03066786,-6.127030896);
         \draw[line width=1.2mm] (8.110321686, 6.127030896) -- (8.110321686, 9.849155307);
          \draw[line width=1.2mm] (-8.110321686, 6.127030896) -- (-8.110321686, 9.849155307);
           \draw[line width=1.2mm] (-8.110321686, -6.127030896) -- (-8.110321686, -9.849155307);
            \draw[line width=1.2mm] (8.110321686, -6.127030896) -- (8.110321686, -9.849155307);
        \draw (8.110321686+1.973448725,-9.849155307) -- (8.110321686+1.973448725,9.849155307);    
          \draw (8.110321686+2*1.973448725,-9.849155307) -- (8.110321686+2*1.973448725,9.849155307); 
           \draw (-8.110321686-1.973448725,-9.849155307) -- (-8.110321686-1.973448725,9.849155307);    
          \draw (-8.110321686-2*1.973448725,-9.849155307) -- 
          (-8.110321686-2*1.973448725,9.849155307);

\draw  (-14.03066786,6.127030896+1.861062206) --(14.03066786,6.127030896+1.861062206);
\draw  (-14.03066786,-6.127030896-1.861062206) --(14.03066786,-6.127030896-1.861062206);
              \node[scale=2] at (0,0) {${K}_0$};
                                       \node[scale=2.5] at (2.4,-2.1) {${K}_1$};
                          \node[scale=3] at (6.3,-4.5) {${K}_2$};
                              \node[scale=3.5] at (11,-1) {$\mathcal{K}_3$};
                           
\end{tikzpicture}}
\caption{The corridors ${K}_0, {K}_1$, ${K}_2$, and ${K}_3$ in $\bR^2$ for $\beta=1.1$ and $\vec{a}=(\sqrt{3},\frac{3}{2})$. The fine grid represents the sets in the partition $\mathcal{K}_3$ of $K_3$. 
Notice that $\lceil 3^{\sqrt{3}-1}\rceil=3$, while $\lceil 3^{3/2-1}\rceil=2$, leading to the illustrated $3\times 2$ sub-grid for the corner set $[3^{1.1\cdot \sqrt{3}}, 4^{1.1\cdot \sqrt{3}})\times [3^{1.1\cdot 1.5},4^{1.1\cdot 1.5})$, which is compatible with the anisotropy $\vec{a}$. 
}\label{fig:1}
\end{figure}

We have the following result summarizing some of important properties of $\mathcal{K}$.
\begin{mylemma}\label{le:cut}
The following holds true about the partitions $\mathcal{A}_j^i$ defined in Eq.\ \eqref{eq:partt}.
\begin{enumerate}
    \item[(a)] 
For $i\in\{1,2,\ldots,d\}$,  there exist  positive constants $c_{i,1}:=c_{i,1}(a_i,\beta)$ and $c_{i,2}:=c_{i,2}(a_i,\beta)$ such that for $j\geq 1$,
$$  c_{i,1} j^{a_i(\beta-1)}\leq  |I| \leq c_{i,2} j^{a_i(\beta-1)},\qquad I\in \mathcal{A}_j^i.$$
\item[(b)]  There exist positive constants $c_3:=c_3(a_i,\beta)$ and $c_4:=c_4(a_i,\beta)$ such that for $j\geq 1$,
 $$c_3 j^{\nu(\beta-1)}\leq  |R| \leq c_4 j^{\nu(\beta-1)},\qquad R\in \mathcal{A}_j.$$
 
 \item[(c)] There exist positive constants $c_5:=c_5(a_i,\beta)\leq c_6:=c_6(a_i,\beta)$
 such that for the affine transformations $T_k^j$ defined in Eq.\ \eqref{eq:affine}, we have
  \begin{equation*}
  T_k^j(c_5 [-1,1]^d)\subseteq R_k^j \subseteq T_k^j(c_6 [-1,1]^d),\qquad j\geq 1, k\in \{1,\ldots,|\mathcal{K}_j|\}.
  \end{equation*}

\item[(d)] Let $\alpha\in [0,1)$ and put $\beta=\frac{1}{1-\alpha}\geq 1$. Then the following geometric relation holds, uniformly in $j\geq 1$,
 \begin{equation*}
     \cx\in R\in \mathcal{K}_j\Longrightarrow |R|\asymp |\cx|_{\vec{a}}^{\nu \alpha}\asymp\brac{\cx}^{\nu\alpha}. 
 \end{equation*}
\end{enumerate}
\end{mylemma}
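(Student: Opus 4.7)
The plan is to work through the four parts in order, with (a) providing the fundamental length estimate that feeds into (b)–(d). For part (a), I will split the two cases. For the ``inner'' intervals $I_{j,k}^{i,I}$ obtained by dividing $B_j^i=[-j^{a_i\beta},j^{a_i\beta})$ into $\lceil j^{a_i}\rceil$ equal pieces, the length is exactly $2j^{a_i\beta}/\lceil j^{a_i}\rceil\asymp j^{a_i\beta-a_i}=j^{a_i(\beta-1)}$, uniformly in $j\geq 1$. For the ``outer'' intervals $I_{j,\pm k}^{i,O}$, the mass $(j+1)^{a_i\beta}-j^{a_i\beta}$ is split into $\lceil j^{a_i-1}\rceil$ equal pieces; the mean value theorem applied to $t\mapsto t^{a_i\beta}$ gives $(j+1)^{a_i\beta}-j^{a_i\beta}\asymp j^{a_i\beta-1}$ (with constants depending on $a_i\beta$ for $j\geq 1$), so each piece has length $\asymp j^{a_i\beta-1}/j^{a_i-1}=j^{a_i(\beta-1)}$ as required.

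Part (b) is then immediate: every $R\in\mathcal{A}_j$ is a product $I_1\times\cdots\times I_d$ with $I_i\in\mathcal{A}_j^i$, so by (a)
\begin{equation*}
|R|=\prod_{i=1}^d |I_i|\asymp \prod_{i=1}^d j^{a_i(\beta-1)}=j^{(\beta-1)(a_1+\cdots+a_d)}=j^{\nu(\beta-1)}.
\end{equation*}
For part (c), I observe that by construction $\cc_k^j$ is the center of the product rectangle $R_k^j=\prod_i [\alpha_i,\gamma_i)$, so $R_k^j=\cc_k^j+\prod_i[-\ell_i/2,\ell_i/2)$ with $\ell_i=\gamma_i-\alpha_i\asymp j^{a_i(\beta-1)}$ by (a). On the other hand, the anisotropic dilation $j^{(\beta-1)\vec{a}}$ applied to $[-1,1]^d$ produces exactly $\prod_i[-j^{a_i(\beta-1)},j^{a_i(\beta-1)}]$, so $T_k^j([-1,1]^d)$ is a rectangle centered at $\cc_k^j$ whose side in direction $i$ is $2j^{a_i(\beta-1)}$. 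Choosing $c_5$ and $c_6$ to absorb the comparison constants from (a) uniformly in the $d$ coordinates gives the two-sided inclusion, uniformly in $j$ and $k$.

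Part (d) is where the choice $\beta=1/(1-\alpha)$ enters decisively. If $\cx\in R\in\mathcal{K}_j$ with $j\geq 1$, then by definition of $K_j$ we have $j^\beta\leq |\cx|_{\vec{a},\infty}<(j+1)^\beta$, so $|\cx|_{\vec{a},\infty}\asymp j^\beta$, and then \eqref{eq:equiv_norm} yields $|\cx|_{\vec{a}}\asymp j^\beta$. From the relation $\alpha=(\beta-1)/\beta$ (equivalent to $\beta=1/(1-\alpha)$) we obtain the key exponent identity $\alpha\beta=\beta-1$, whence
\begin{equation*}
|\cx|_{\vec{a}}^{\nu\alpha}\asymp j^{\nu\alpha\beta}=j^{\nu(\beta-1)}\asymp |R|,
\end{equation*}
where the last equivalence is (b) (noting $R\in\mathcal{K}_j\subset\mathcal{A}_j$). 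The final equivalence with $\brac{\cx}^{\nu\alpha}$ follows from \eqref{bracket} and the fact that $|\cx|_{\vec{a}}\gtrsim 1$ when $\cx\in K_j$ with $j\geq 1$, so $1+|\cx|_{\vec{a}}\asymp|\cx|_{\vec{a}}$.

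I do not anticipate any serious obstacle; the only care needed is the uniformity of the mean-value estimate $(j+1)^{a_i\beta}-j^{a_i\beta}\asymp j^{a_i\beta-1}$ for all $j\geq 1$ (including the transition $j=1$), which is routine once one notes the constants may depend on $a_i$ and $\beta$ as permitted by the statement. The structural point worth emphasizing in the write-up is that the choice to subdivide the ``corner'' sets into $\lceil j^{a_i-1}\rceil$ pieces in coordinate $i$ is precisely calibrated so that all intervals in $\mathcal{A}_j^i$ share the common length scale $j^{a_i(\beta-1)}$, which is what makes the clean identity in (d) possible.
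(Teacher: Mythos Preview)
Your proposal is correct and follows essentially the same approach as the paper's own proof: mean-value estimates for the outer intervals and direct computation for the inner intervals in (a), product of coordinate bounds for (b), comparison of side-lengths for (c), and the exponent identity $\alpha\beta=\beta-1$ combined with (b) and \eqref{bracket} for (d). The only cosmetic difference is that the paper tracks explicit constants (e.g., $2^{a_i\beta-1}$ from bounding $((j+1)/j)^{a_i\beta-1}$) where you write $\asymp$, but the underlying argument is identical.
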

\begin{proof}
For (a), we first notice that for any $j\geq 1$, using the mean-value theorem,
    $$ a_i\beta \cdot j^{a_i\beta-1} \leq (j+1)^{a_i\beta}-j^{a_i\beta}\leq a_i\beta(j+1)^{a_i\beta-1},$$
  so, using the notation from \eqref{eq:partt}, for  $k\in \{1,\ldots, \lceil j^{ a_i-1}\rceil\}$,
$$
 |I_{j,\pm k}^{i,O}|\leq \frac{a_i\beta(j+1)^{a_i\beta-1}}{\lceil j^{ a_i-1}\rceil}\leq  \frac{a_i\beta(j+1)^{a_i\beta-1}}{ j^{ a_i-1}}.$$
We notice that
\begin{equation}\label{eq:ratio}
    \frac{(j+1)^{a_i\beta-1}}{j^{a_i\beta-1}}=\bigg(1+\frac{1}{j}\bigg)^{a_i\beta-1}\leq 2^{a_i\beta-1},
\end{equation}  
which implies that 
  \begin{equation}\label{eq:up1}
  |I_{j,\pm k}^{i,O}|\leq \frac{a_i\beta(j+1)^{a_i\beta-1}}{ j^{ a_i-1}}\leq 
 2^{a_i\beta-1} {a_i\beta\cdot j^{a_i(\beta-1)}}.
  \end{equation}
Also, since $j\geq 1$ and $a_i\geq 1$,
\begin{equation}
    \label{eq:up2}
  |I_{j,\pm k}^{i,O}|\geq \frac{a_i\beta \cdot j^{a_i\beta-1}}{ \lceil j^{ a_i-1}\rceil}
\geq \frac{a_i\beta \cdot j^{a_i\beta-1}}{ j^{ a_i-1}+1}\geq \frac{1}{2}\frac{a_i\beta \cdot j^{a_i\beta-1}}{ j^{ a_i-1}}= \frac{a_i\beta}{2}j^{a_i(\beta-1)}.
\end{equation}
For the remaining intervals from $\mathcal{A}_j^i$, we have for $j\geq 1$, $k=1,\ldots, \lceil j^{ a_i}\rceil$,
 \begin{equation}
    \label{eq:up3}
  |I_{j,k}^{i,I}|=\frac{2j^{a_i\beta}}{\lceil j^{ a_i}\rceil}
\leq \frac{2j^{a_i\beta}}{j^{ a_i}}=2j^{a_i(\beta-1)},
\end{equation}
 and, similarly, using that $j\geq 1$ and $a_i\geq 1$,
  \begin{equation}
    \label{eq:up4}
 |I_{j,k}^{i,I}|=\frac{2j^{a_i\beta}}{\lceil j^{ a_i}\rceil}\geq 
 \frac{2j^{a_i\beta}}{j^{ a_i}+1}\geq \frac{1}{2}\frac{2j^{a_i\beta}}{j^{ a_i}}=
\frac{j^{a_i(\beta-1)}}{2}. 
\end{equation}
Now, by \eqref{eq:up1} and \eqref{eq:up3}, we may put $c_{i,2}=\max\{2,2^{a_i\beta-1}\}$, and, by  \eqref{eq:up2} and \eqref{eq:up4}, we let $c_{i,1}=\min\{\frac{1}{2},\frac{a_i\beta}{2}\}$. This proves (a).

We turn to the proof of (b), which is a direct consequence of  (a). We consider a rectangle $R\in \mathcal{A}_j$. Recall that $R=I_1\times\cdot\times I_d$, $I_i\in  \mathcal{A}_j^i$, so by using the estimate from (a) for each $I_i$, 
$$\bigg(\prod_{j=1}^d c_{i,1}\bigg)j^{\nu(\beta-1)}=\prod_{j=1}^d c_{i,1} j^{a_i(\beta-1)}\leq |R|\leq  \prod_{j=1}^d c_{i,2} j^{a_i(\beta-1)}=\bigg(\prod_{j=1}^d c_{i,2}\bigg)j^{\nu(\beta-1)},$$
and we simply put $c_3:=\prod_{j=1}^d c_{i,1}$ and $c_4:=\prod_{j=1}^d c_{i,2}$.

Next, we consider (c), which is also a consequence of (a). We first notice that for $C>0$,
$$T_k^j(C[-1,1]^d)=[-Cj^{a_1(\beta-1)},Cj^{a_2(\beta-1)}]\times \cdots\times [-Cj^{a_d(\beta-1)},Cj^{a_d(\beta-1)}]+c_k^j.$$
The interval diameter in coordinate $i$ is thus $2Cj^{a_i(\beta-1)}$ and comparing this to the estimates from (a), we see that we may take $c_5:=\frac{1}{2}\min\{c_{1,1},c_{2,1},\ldots,c_{d,1}\}$ and $c_6:=\frac{1}{2}\max\{c_{1,2},c_{2,2},\ldots,c_{d,2}\}$, to obtain
\begin{equation*}
  T_k^j(c_5 [-1,1]^d)\subseteq R_k^j \subseteq T_k^j(c_6 [-1,1]^d),\qquad j\geq 1, k\in \{1,\ldots,|\mathcal{K}_j|\}.
  \end{equation*}

Finally, for (d), we let $j\geq 1$ and take any rectangle $R\in \mathcal{K}_j$, and let $\cx\in R$. Notice that $R\subset K_j$, so by the construction of $K_j$, 
$$j^{\beta}\leq |\cx|_{\vec{a},\infty}<(j+1)^{\beta}.$$
Hence, for $\beta=\frac{1}{1-\alpha}$, using (b),
$$\frac{1}{c_4}\frac{j^{\nu\frac{\alpha}{1-\alpha}}}{ j^{\nu(\beta-1)}}\leq \frac{|\cx|_{{\vec{a},\infty}}^{\nu \alpha}}{|R|}\leq \frac{1}{c_3}\frac{(j+1)^{\nu\frac{\alpha}{1-\alpha}}}{ j^{\nu(\beta-1)}}.$$
We notice that $\beta-1=\frac{\alpha}{1-\alpha}$, and by an estimate similar to the estimate from Eq.\ \eqref{eq:ratio}, we obtain  $(j+1)^{\nu\frac{\alpha}{1-\alpha}}\leq 2^{\nu\frac{\alpha}{1-\alpha}}j^{\nu\frac{\alpha}{1-\alpha}}$,  so we may conclude that 
$$0<\frac{1}{c_4}\leq \frac{|\cx|_{{\vec{a},\infty}}^{\nu \alpha}}{|R|}\leq \frac{2^{\nu\frac{\alpha}{1-\alpha}}}{c_3}<+\infty.$$
Recall that $|\cdot|_{\vec{a}}\asymp |\cdot|_{\vec{a},\infty}$, and by \eqref{bracket} it follows that $ |\cx|_{\vec{a}}\asymp \brac{\cx}$ uniformly for $|\cx|_{\vec{a},\infty}\geq 1$. This proves (d) and also concludes the proof of the lemma.
\end{proof}

\subsection{Tensor-product brushlet bases} Let us now define a corresponding tensor-product brushlet system. We shall use the notation $\bN_0:= \{0\}\cup \bN$.
By Lemma \ref{le:cut}, there exists $c_i:=c_i(a_i,\beta)\in (0,1/2)$, for $i=1,2,\ldots,d$, such that
 by selecting the common cutoff radius $c_i j^{a_1(\beta-1)}$ at all inner knots 
 in the  partition specified in Eq.\ \eqref{eq:partt}, and at 
 the endpoints $\pm (j+1)^{a_i\beta}$, we  select cutoff radii $c_i (j+1)^{a_1(\beta-1)}$,  
 the cutoff radii conditions given in  Eq.\ \eqref{eq:epsilon} are satisfied for any $j\geq 1$.

With the $\epsilon$-values so assigned to the intervals from Eq.\ \eqref{eq:partt}, for each $j\in\bN$ and $R_{j,k}=I^1_{j,k}\times I^2_{j,k}\times\cdots\times I_{j,k}^d\in \mathcal{K}_j$,  we associate the orthonormal brushlet system
$$\mathcal{W}_{j,k}:=\{w_{R_{j,k},\cn}:\cn\in \bN_0^d\},$$
with $w_{R_{j,k},\cn}$ defined as the following tensor product  $$w_{R_{j,k},\cn}:=\bigotimes_{i=1}^d w_{n_i,I^i_{j,k}},$$
for $\cn=(n_1,\ldots,n_d)^T$. 
We define a low-pass part by $\mathcal{K}_0=\{[-1,1)^d\}$, where we for coordinate $i$ assign left and right  cutoff radii $c_i$ to the interval  $[-1,1)$. The corresponding low-pass brushlet system is denoted
$$\mathcal{W}_{0,0}:=\{w_{R_{0,0},\cn}:\cn\in \bN_0^d\}.$$

We have the following fundamental properties of the system $\{w_{R,\cn}\}_{R,\cn}$.

\begin{myprop}\label{prop:onb}
For $\beta\geq 1$, the following holds true.
\begin{enumerate}
    \item[(i).] For $j\geq 1$, the family $$\mathcal{W}_j:=\bigcup_{k=1}^{|\mathcal{K}_j|}\mathcal{W}_{j,k}$$ is orthonormal in $L_2(\bR^d)$.
\item[(ii).] For $j\geq 1$, let $P_{\mathcal{W}_j}$ denote the orthogonal projection onto the $L_2(\bR^d)$-closure of $\text{Span}(\mathcal{W}_j)$. Then
$$P_{\mathcal{W}_j}=P_{A_{j}^1}\otimes P_{A_{j}^2}\otimes\cdots\otimes P_{A_{j}^d}-P_{B_{j}^1}\otimes P_{B_{j}^2}\otimes\cdots\otimes P_{B_{j}^d},$$
 where  $A_j^i:=\big[-(j+1)^{\beta a_i},(j+1)^{\beta a_i}\big)$ with left and right  cutoff radii
 $c_i(j+1)^{a_i (\beta-1)}$, and $B_j^i:=\big[-j^{\beta a_i},j^{\beta a_i}\big)$ with left and right  cutoff radii
 $c_ij^{a_i (\beta-1)}$. 
\item[(iii).] The full brushlet system $$\mathcal{W}_\beta:=\mathcal{W}_{0,0}\cup \{\mathcal{W}_{j,k}:k=1,2,\ldots,|\mathcal{K}_j|, j\geq 1\}$$
forms an orthonormal basis for $L_2(\bR^d)$.
\end{enumerate}

\end{myprop}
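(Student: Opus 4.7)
The plan is to reduce the three claims to the univariate local-trigonometric theory collected in Appendix \ref{app:brush}, exploiting the product structure of the rectangles in $\mathcal{K}_j$ together with the carefully calibrated cutoff radii fixed in the paragraph preceding the statement. For (i), I would observe that the assignment $c_i j^{a_i(\beta-1)}$ at every inner knot of the one-dimensional partition $\mathcal{A}_j^i$ from Eq.\ \eqref{eq:partt}, together with $c_i(j+1)^{a_i(\beta-1)}$ at the outer endpoints $\pm(j+1)^{a_i\beta}$, meets the univariate compatibility condition Eq.\ \eqref{eq:epsilon}; this is guaranteed by Lemma \ref{le:cut}(a) and the choice $c_i\in(0,1/2)$. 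The univariate brushlet family on $\mathcal{A}_j^i$ is therefore an orthonormal basis for a closed subspace $H(A_j^i)\subset L_2(\bR)$, and its sub-family indexed by $\mathcal{B}_j^i$ is an ONB for a subspace $H(B_j^i)\subset H(A_j^i)$. Taking tensor products coordinate-wise, the full family $\{w_{R,\cn}:R\in\mathcal{A}_j,\,\cn\in\bN_0^d\}$ is an ONB of $\bigotimes_{i=1}^d H(A_j^i)$; since $\mathcal{W}_j$ is precisely its restriction to $\mathcal{K}_j=\mathcal{A}_j\setminus\mathcal{B}_j$, orthonormality of $\mathcal{W}_j$ is immediate.

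For (ii), the tensor ONB identification just exhibited shows that the orthogonal projection onto $\bigotimes_{i=1}^d H(A_j^i)$ equals $P_{A_j^1}\otimes\cdots\otimes P_{A_j^d}$, and analogously for $\mathcal{B}_j$. Since $\mathcal{B}_j\subset\mathcal{A}_j$, the space $\Span(\mathcal{W}_j)$ is the orthogonal complement of $\bigotimes_{i} H(B_j^i)$ inside $\bigotimes_{i} H(A_j^i)$, and subtracting the two tensor projections yields the stated formula.

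The substance of (iii) is inter-scale orthogonality combined with completeness. The crucial geometric fact is that $A_j^i=B_{j+1}^i$ as half-open intervals and, by construction, the cutoff values assigned at the shared boundary point $\pm(j+1)^{a_i\beta}$ coincide from both scales — both equal $c_i(j+1)^{a_i(\beta-1)}$. Hence $H(A_j^i)=H(B_{j+1}^i)$ as subspaces of $L_2(\bR)$, and with the convention $\Span(\mathcal{W}_{0,0})=\bigotimes_{i=1}^d H(B_1^i)$ I would establish the telescoping identity
\[
\Span(\mathcal{W}_{0,0})\oplus\bigoplus_{j=1}^N\Span(\mathcal{W}_j)=\bigotimes_{i=1}^d H(A_N^i),\qquad N\geq 1,
\]
with pairwise orthogonal summands. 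Orthogonality between $\mathcal{W}_j$ and $\mathcal{W}_{j'}$ for arbitrary $j\neq j'$ then follows by nesting, and completeness reduces to the standard univariate statement that $\bigcup_j A_j^i=\bR$ implies $\bigcup_j H(A_j^i)$ is dense in $L_2(\bR)$, which passes to $L_2(\bR^d)$ through the tensor product.

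The main obstacle I anticipate is precisely the identification $H(A_j^i)=H(B_{j+1}^i)$ underpinning (iii): this is where the painstaking calibration of cutoff radii earns its keep, ensuring that the univariate brushlet subspace on $A_j^i$ with outer cutoff $c_i(j+1)^{a_i(\beta-1)}$ is literally the same subspace as the one on $B_{j+1}^i=A_j^i$ equipped with the inner cutoff at $\pm(j+1)^{a_i\beta}$ inherited from $\mathcal{A}_{j+1}^i$. Once this consistency is nailed down, everything else reduces to coordinate-wise application of standard local-trigonometric basis theory, and no essentially multidimensional argument is required.
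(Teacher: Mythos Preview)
Your proposal is correct and follows essentially the same route as the paper: reducing everything to the univariate local-trigonometric theory via the tensor product structure, with (ii) obtained by subtracting the $\mathcal{B}_j$-tensor projection from the $\mathcal{A}_j$-tensor projection, and (iii) hinging on the identification $A_j^i=B_{j+1}^i$ with matching cutoff radii so that the sum $P_{\mathcal{W}_{0,0}}+\sum_{j\leq N}P_{\mathcal{W}_j}$ telescopes to $\bigotimes_i\mathcal{P}_{A_N^i}$. The only cosmetic difference is that you phrase the argument in terms of subspaces $H(A_j^i)$ and orthogonal direct sums, whereas the paper works directly with the projection operators and computes $P_{\mathcal{W}_j}P_{\mathcal{W}_{j+1}}=0$ explicitly using the nesting identities \eqref{eq:nested1}--\eqref{eq:nested2}; your nesting argument for orthogonality between non-adjacent levels is in fact slightly cleaner than the paper's treatment.
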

\begin{proof}
For (i), we simply notice that by construction $\mathcal{K}_j\subset \mathcal{A}_j$, making $\mathcal{W}_j$ a subset of the larger system 
$$\mathcal{S}_j:=\{w_{R,\cn}:\cn\in \bN_0^d, R\in\mathcal{A}_j\}.$$
By the product structure $\mathcal{A}_j:=\mathcal{A}_j^1\times\cdots\times \mathcal{A}_j^d$, it follows easily that $\mathcal{S}_j$ is a $d$-fold tensor product of univariate orthonormal brushlet systems in $L_2(\bR)$, making  $\mathcal{S}_j$ an orthonormal system in $L_2(\bR^d)$, and $\mathcal{W}_j\subset \mathcal{S}_j$ is thus an orthonormal system in $L_2(\bR^d)$.

 For (ii), we first use the product structure $\mathcal{A}_j:=\mathcal{A}_j^1\times\cdots\times \mathcal{A}_j^d$ to obtain,
\begin{align}\sum_{R\in\mathcal{A}_j} P_{R}
&=
\sum_{I_1\in \mathcal{A}_j^1,I_2\in \mathcal{A}_j^2,\ldots, I_d\in \mathcal{A}_j^d} {\mathcal{P}}_{I_1}\otimes {\mathcal{P}}_{I_2}\otimes\cdots\otimes {\mathcal{P}}_{I_{d}}\nonumber\\
&=
\sum_{I_1\in \mathcal{A}_j^1}\sum_{I_2\in\mathcal{A}_j^2}\cdots
\sum_{I_d\in \mathcal{A}_j^d}
 {\mathcal{P}}_{I_1}\otimes {\mathcal{P}}_{I_2}\otimes\cdots\otimes {\mathcal{P}}_{I_{d}}\nonumber\\
&=\bigg(\sum_{I_1\in \mathcal{A}_j^1} {\mathcal{P}}_{I_1} \bigg)\otimes \bigg(\sum_{I_2\in \mathcal{A}_j^2} {\mathcal{P}}_{I_2} \bigg)\otimes\cdots\otimes \bigg(\sum_{I_d\in \mathcal{A}_j^d} {\mathcal{P}}_{I_d} \bigg)\nonumber\\
&={\mathcal{P}}_{A_{j}^1}\otimes {\mathcal{P}}_{A_{j}^2}\otimes\cdots\otimes {\mathcal{P}}_{A_{j}^d},\label{eq:teleA}
\end{align}
 where we have used the projection addition property stated in Eq.\  \eqref{eq:proj} repeatedly for the final equality. Also, according to Eq.\ \eqref{eq:proj}, the left and right right cutoff radii of $A_{j}^i$ associated with the projection are both 
 $c_i(j+1)^{a_i (\beta-1)}$. The same argument applied to $\mathcal{B}_j:=\mathcal{B}_j^1\times \mathcal{B}_j^2\times\cdots\times \mathcal{B}_j^d$ yields,
 \begin{align}\sum_{R\in\mathcal{A}_j} P_{R}
&={\mathcal{P}}_{B_{j}^1}\otimes {\mathcal{P}}_{B_{j}^2}\otimes\cdots\otimes {\mathcal{P}}_{B_{j}^d},\label{eq:teleB}
\end{align}
with $c_ij^{a_i (\beta-1)}$ as left and right cutoff radii of $B_{j}^i$ for the projection. We combine \eqref{eq:teleA} and \eqref{eq:teleB} to conclude that
\begin{align}
P_{\mathcal{W}_j}&=\sum_{R\in \mathcal{K}_j=\mathcal{A}_j\backslash \mathcal{B}_j} P_R\nonumber\\&=\sum_{R\in \mathcal{A}_j} P_R
-\sum_{R\in \mathcal{B}_j} P_R\nonumber\\
&={\mathcal{P}}_{A_{j}^1}\otimes {\mathcal{P}}_{A_{j}^2}\otimes\cdots\otimes {\mathcal{P}}_{A_{j}^d}
-{\mathcal{P}}_{B_{j}^1}\otimes {\mathcal{P}}_{B_{j}^2}\otimes\cdots\otimes {\mathcal{P}}_{B_{j}^d}.\label{eq:tele}
\end{align}

Finally, to prove (iii), we first notice  for $j\geq 1$,
\begin{align*}
P_{\mathcal{W}_j}P_{\mathcal{W}_{j+1}}&
=\bigg(\bigotimes_{j=1}^d\mathcal{P}_{A_{j}^i}-\bigotimes_{j=1}^d\mathcal{P}_{B_{j}^i}\bigg)
\bigg(\bigotimes_{j=1}^d\mathcal{P}_{A_{j+1}^i}-\bigotimes_{j=1}^d\mathcal{P}_{B_{j+1}^i}\bigg)\\
&=\bigotimes_{j=1}^d\mathcal{P}_{A_{j}^i}\mathcal{P}_{A_{j+1}^i}
- \bigotimes_{j=1}^d\mathcal{P}_{A_{j}^i}\mathcal{P}_{B_{j+1}^i}
- \bigotimes_{j=1}^d\mathcal{P}_{B_{j}^i}\mathcal{P}_{A_{j+1}^i}
+\bigotimes_{j=1}^d\mathcal{P}_{B_{j}^i}\mathcal{P}_{B_{j+1}^i}
\end{align*}
Now, we recall that by construction, ${B_{j+1}^i}={A_{j}^i}$.
Also, by the choice $c_ij^{a_i(\beta-1)}$ of cutoff radii for  $\mathcal{P}_{A_{j}^i}$ and $\mathcal{P}_{A_{j+1}^i}$,
we may call on Eqs.\ \eqref{eq:nested1} and \eqref{eq:nested2} to obtain 
$$\mathcal{P}_{A_{j}^i}\mathcal{P}_{A_{j+1}^i}
=\mathcal{P}_{A_{j+1}^i}\mathcal{P}_{A_{j}^i}=\mathcal{P}_{A_{j}^i}.$$
Since $\mathcal{P}_{A_{j}^i}$ is an orthogonal projection, $\mathcal{P}_{A_{j}^i}^2=\mathcal{P}_{A_{j}^i}$ and we obtain
\begin{align*}
P_{\mathcal{W}_j}P_{\mathcal{W}_{j+1}}
&=\bigotimes_{j=1}^d\mathcal{P}_{A_{j}^i}\mathcal{P}_{A_{j+1}^i}
- \bigotimes_{j=1}^d\mathcal{P}_{A_{j}^i}\mathcal{P}_{B_{j+1}^i}
- \bigotimes_{j=1}^d\mathcal{P}_{B_{j}^i}\mathcal{P}_{A_{j+1}^i}
+\bigotimes_{j=1}^d\mathcal{P}_{B_{j}^i}\mathcal{P}_{B_{j+1}^i}\\
&=\bigotimes_{j=1}^d\mathcal{P}_{A_{j}^i}
- \bigotimes_{j=1}^d\mathcal{P}_{A_{j}^i}
- \bigotimes_{j=1}^d\mathcal{P}_{B_{j}^i}
+\bigotimes_{j=1}^d\mathcal{P}_{B_{j}^i}\\
&=0.
\end{align*}
The same type of argument shows that $P_{\mathcal{W}_{j+1}}P_{\mathcal{W}_j}=0$, and also $P_{\mathcal{W}_{0,0}}P_{\mathcal{W}_1}=P_{\mathcal{W}_1}P_{\mathcal{W}_{0,0}}=0$, so we have orthogonality between all ``levels'', which also shows that the full brushlet system $\mathcal{W}_\beta$ is orthonormal. All that remains is to prove completeness. We will rely on the telescoping structure derived in Eq.\ \eqref{eq:tele}. We have
\begin{align}
P_{\mathcal{W}_{0,0}}+\sum_{j=1}^N P_{\mathcal{W}_j}&=
P_{\mathcal{W}_{0,0}}+\sum_{j=1}^N \big\{{\mathcal{P}}_{A_{j}^1}\otimes {\mathcal{P}}_{A_{j}^2}\otimes\cdots\otimes {\mathcal{P}}_{A_{j}^d}
-{\mathcal{P}}_{B_{j}^1}\otimes {\mathcal{P}}_{B_{j}^2}\otimes\cdots\otimes {\mathcal{P}}_{B_{j}^d}\big\}\nonumber\\
&={\mathcal{P}}_{A_N^1}\otimes {\mathcal{P}}_{A_{N}^2}\otimes\cdots\otimes {\mathcal{P}}_{A_{N}^d},
 \end{align}
 where we used that $B_{1}^i=[-1,1)$ with left and right $\epsilon$-values $c_i$,  $i=1,\ldots, d$, so $$P_{\mathcal{W}_{0,0}}-{\mathcal{P}}_{B_{1}^1}\otimes {\mathcal{P}}_{B_{1}^2}\otimes\cdots\otimes {\mathcal{P}}_{B_{1}^d}=0.$$
It thus follows easily, using the observation in Eq.\ \eqref{eq:summm}, that 
$$P_{R_{0,0}}+\sum_{j=1}^{N}\sum_{R\in \mathcal{K}_j} P_{R_{j,k}}\longrightarrow Id_{L_2(\bR^d)},$$
in the strong operator topology, as $N\rightarrow \infty$. This completes the proof.
\end{proof}

\section{Modulation and Triebel-Lizorkin type smoothness spaces}\label{s:onb}
In the previous section we have constructed a large flexible family of orthonormal bases, all compatible with various ``polynomial'' decompositions of the frequency space $\bR^d$. In this section, we utilize the polynomial structure to decompose anisotropic smoothness spaces with a compatible time-frequency  structure. 

Let us first introduce the notion of anisotropic $\alpha$-coverings for $\alpha\in [0,1]$. For a bounded subset $Q\subset \bR^d$ with non-empty interior, we let 
\begin{align*}
  r_Q &= \sup\{ r\in\bR_+\colon B_{\vec{a}}(\cy,r)\subset Q\; \text{for some}\; \cy\in \bR^d\},\\
  R_Q &= \inf\{ R\in\bR_+\colon Q\subset B_{\vec{a}}(\cy,R)\; \text{for some}\; \cy\in \bR^d\}
\end{align*}
denote, respectively, the radius of the inscribed and circumscribed
anisotropic ball of $Q$. 

We have the following definition.

\begin{mydef}\label{def:1} 
\begin{itemize}
\item[]
    \item[i.]
A countable set $\qp$ of bounded subsets $Q\subset \bR^d$ with non-empty interior is called an
admissible covering if $\bR^d=\cup_{Q\in \qp} Q$ and there
exists $n_0<\infty$ such that 
  $$\#\{Q'\in\qp:Q\cap Q'\not=\emptyset\}\leq n_0$$ 
  for all $Q\in\qp.$ 
\item[ii.]
 Let $0\leq
  \alpha\leq 1$. An admissible covering is called an (anisotropic) $\alpha$-covering of $\bR^d$ if
$|Q|^{1/\nu}\asymp \brac{\xi}^\alpha$ (uniformly) for all $x\in Q$
and for all $Q\in\qp$, and there exists 
a constant $K\geq 1$ such that $R_Q/r_Q\leq K$ for
all $Q\in \qp$.
\end{itemize}
\end{mydef}

We have already proven in Lemma \ref{le:cut}.(d) that for $\alpha\in [0,1)$, the covering $\{\mathcal{K}_j\}_{j=0}^\infty$ considered in Eq.\ \eqref{eq:Kj} satisfies, for $R_k^j\in \mathcal{K}_j$,
\begin{equation}\label{eq:Qrule}
|R^j_k|^{1/\nu}\asymp \brac{\cx}^\alpha,\qquad\text{(uniformly) for all }\cx\in R_k^j,
\end{equation}
provided we put $\beta=(1-\alpha)^{-1}$.
The inscribed/circumscribed ball ratio $R_{R_k^j}/r_{R_k^j}$  is also  uniformly bounded as shown in (c) of Lemma \ref{le:cut}. We  conclude that the family  $\{\mathcal{K}_j\}_{j=0}^\infty$ is indeed an (anisotropic) $\alpha$-covering. 

In order to define smoothness spaces adapted to $\alpha$-coverings, we need to consider an associated slightly expanded $\alpha$-covering defined by selecting a constant $c_7>\max\{1,c_6\}$, with $c_6$ defined in Lemma \ref{le:cut}.(c). We then consider the rectangles
$$\tilde{R}_k^j:=T_k^j(c_7[-1,1]^d),\qquad j\geq 0, k=1,\ldots, |\mathcal{K}_j|,$$
where $T_k^j$ is the affine map defined in \eqref{eq:affine}. It is straightforward to verify that $\{\tilde{R}_k^j\}_{j,k}$ also satisfies Definition \ref{def:1}.
We now take any $\Psi\in C^\infty(\bR^d)$ satisfying $\Psi(\cx)=1$ for $\cx\in c_6[-1,1]^d$ and $\text{supp}(\Psi)\subseteq c_7[-1,1]^d$. For $R_k^j\in\mathcal{K}_j$, we define
\begin{equation}\label{eq:bap}
\Psi_{R_k^j}(\cdot):=\Psi((T_k^j)^{-1}\cdot),\qquad 
\phi_{R_k^j}(\cdot):=\frac{\Phi_{R_k^j}(\cdot)}{\sum_{\ell=0}^\infty \sum_{m=1}^{|\mathcal{K}_\ell|} \Phi_{R_m^\ell}(\cdot)}.
\end{equation}
By construction,
$$\sum_{j,k}\phi_{R_k^j}(\xi)=  1, \qquad \xi\in \bR^d,$$
with the sum being (uniformly) locally finite by property (i) in Definition \ref{def:1}. By following the approach outlined in \cite[Section 4]{Borup2008}, one can verify that $\sup_{j,k}\|\phi_{R_k^j}\|_{H^s_2}<\infty$ for any $s>0$, with the Sobolev norm $\|\cdot\|_{H^s_2}$ defined in Eq.\ \eqref{eq:sob}, making $\Phi:=\{\phi_{R_k^j}\}_{j,k}$ a so-called Bounded Admissible Partition of Unity (BAPU) associated with the $\alpha$-covering $\{\tilde{R}_k^j\}_{j,k}$, we refer to  \cite{Borup2008} for further details.


We can now define the  (anisotropic) Triebel-Lizorkin type spaces and the decomposition spaces. We let $ \mathcal{S}':=\mathcal{S}'(\bR^d)$ denote the class of tempered distributions defined on $\mathbb{R}^d$.

\begin{mydef}\label{def:complete}
Let $0\leq \alpha<1$ and let $\Psi$ be the BAPU introduced in \eqref{eq:bap} associated with the $\alpha$-covering  $\{\tilde{R}_k^j\}_{j,k}$.  For each $\phi_{R_k^j} \in\Phi$,  we put $\phi_{R_k^j}(D)f := \mathcal{F}^{-1}(\phi_{R_k^j}\mathcal{F}f)$.
\begin{itemize}
	\item For $s\in\R, 0<p<\infty$ and $0<q\leq \infty$, we define the (anisotropic) $\alpha$-Triebel-Lizorkin space $\dot{F}_{p,q}^{s,\alpha}(\va)$ as the set all $f\in \mathcal{S}'$ satisfying
	\begin{equation*}
	\norm{f}_{{F}_{p,q}^{s,\alpha}(\va)} := \norm{\left( \sum_{j=0}^\infty \sum_{k=1}^{|\mathcal{K}_j|}\big||R_k^j|^{s/\nu}\phi_{R_k^j}(D)f\big|^q\right)^{1/q}}_{L_p(\bR^d)} < \infty.
	\end{equation*}
	\item For $s \in \mathbb{R}, 0<p\leq \infty$ and $0<q\leq\infty$ we define the (anisotropic) $\alpha$-modulation space ${M}_{p,q}^{s,\alpha}(\va)$ as the set of all $f\in\mathcal{S}'$ satisfying
	\begin{equation*}
	\norm{f}_{{M}_{p,q}^{s,\alpha}(\va)}  :=
	\left( \sum_{j=0}^\infty \sum_{k=1}^{|\mathcal{K}_j|} |R_k^j|^{s/\nu} 
    \norm{\phi_{R_k^j}(D)f}_{L_p(\bR^d)}^q \right)^{1/q} < \infty,
	\end{equation*}
	with the modification that the summation is replaced by $\sup_{j,k}$ when $q=\infty$.
\end{itemize}
\end{mydef}

\begin{myre}
 The expressions $\norm{\cdot}_{{F}_{p,q}^{s,\alpha}(\va)}$ and $\norm{\cdot}_{{M}_{p,q}^{s,\alpha}(\va)}$ in Definition \ref{def:complete} depend on $\alpha$  through the geometry of the rectangles $\{R_k^j\}$.  To make the $\alpha$-dependence more explicit, we may consider the center points $\{\cc_k^j\}_{j,k}$ for the $\alpha$-covering $\{{R}_k^j\}_{j,k}$ as defined in Eq.\ \eqref{eq:affine}. By \eqref{eq:Qrule}, we have, uniformly,
   $ \brac{\cc_k^j}^\alpha\asymp |{R}^j_k|^{1/\nu}$, so we may clearly replace 
  $|{R}^j_k|^{s/\nu}$ by  $\brac{\cc_k^j}^{\alpha s}$ in Definition \ref{def:complete} to obtain equivalent expressions for $\norm{\cdot}_{{F}_{p,q}^{s,\alpha}(\va)}$ and $\norm{\cdot}_{{M}_{p,q}^{s,\alpha}(\va)}$.
\end{myre}

It can be verified, see \cite[Proposition 5.2]{Borup2008}, that ${F}_{p,q}^{s,\alpha}(\bR^d)$ and  ${M}_{p,q}^{s,\alpha}(\bR^d)$ are quasi-Banach spaces if  $0<p<1$ or $0<q<1$, and they are Banach spaces when $1\leq p,q <\infty$. Let $\mathcal{S}:=\mathcal{S}(\bR^d)$ denote the Schwartz space, then we have the embeddings 
$$\mathcal{S}\hookrightarrow {M}_{p,q}^{s,\alpha}(\va)\hookrightarrow
\cS',\qquad \cS\hookrightarrow {F}_{p,q}^{s,\alpha}(\va)\hookrightarrow
\cS',$$ see \cite{AlJawahri2019, Borup2008}.  Moreover, if $p,q<\infty$, $\mathcal{S}$ is dense in both
$M^{s,\alpha}_{p,q}(\va)$ and $F^{s,\alpha}_{p,q}(\va)$.
 The particular space does not (up to norm equivalence)  depend on the choice of BAPU nor does it depend on the particular choice of sample frequencies $\cc_k^j$ as long as $\cc_k^j\in \tilde{R}_k^j$,  see \cite[Proposition 5.3]{Borup2008}. 

To obtain  further embeddings of $M^{s,\alpha}_{p,q}(\va)$ and $F^{s,\alpha}_{p,q}(\va)$ it is convenient to use the ordering on the countable set $\mathcal{K}$ given in \eqref{eq:K} and express
the norms of $M^{s,\alpha}_{p,q}(\va)$ and $F^{s,\alpha}_{p,q}(\va)$
using the $L_p(\ell_q)$ and $\ell_q(L_p)$-norms, defined for  a sequence
$f=\{f_j\}_{j\in\bN}$ of measurable functions by
\begin{equation}\label{eq:lplq}
\|f\|_{L_p(\ell_q)}:=\bigg\|\bigg(\sum_{j\in\bN} 
|f_j|^q\bigg)^{1/q}\bigg\|_{L_p(\bR^d)},\qquad \|f\|_{\ell_q(L_p)}=\bigg(\sum_{j\in\bN} 
\|f_j\|_{L_p(\bR^d)}^q\bigg)^{1/q},
\end{equation}
where $0<p,q< \infty$. 
In fact, for  $f\in \mathcal{S}'$, it follows directly from Definition \ref{def:complete} that
$$\norm{f}_{{F}_{p,q}^{s,\alpha}(\va)} = \bigg\|\Big\{|R|^{s/\nu}\phi_{R}(D)f\Big\}_{R\in\mathcal{K}}\bigg\|_{L_p(\ell_q)},$$
and
$$\norm{f}_{{M}_{p,q}^{s,\alpha}(\va)} = \bigg\|\Big\{|R|^{s/\nu}\phi_{R}(D)f\Big\}_{R\in\mathcal{K}}\bigg\|_{\ell_q(L_p)} .$$
Using this identification, we can apply general estimates, see  \cite[\S 2.3.2]{Triebel1983}, on $L_p(\ell_q)$ and $\ell_q(L_p)$-norms to obtain the general embeddings 

\begin{equation}\label{eq:emb}
{M}_{p,\min\{p,q\}}^{s,\alpha}(\va)\hookrightarrow {F}_{p,q}^{s,\alpha}(\va)
\hookrightarrow {M}_{p,\max\{p,q\}}^{s,\alpha}(\va),
\end{equation}
valid for $0<p,q<\infty$ and $s\in\bR$.

\subsection{Characterizations of ${F}_{p,q}^{s,\alpha}(\va)$ and ${M}_{p,q}^{s,\alpha}(\va)$}
We claim that the spaces ${F}_{p,q}^{s,\alpha}(\va)$ $[{M}_{p,q}^{s,\alpha}(\va)]$ can be completely characterized for $\alpha \in [0,1)$ using the full orthonormal brushlet system $\mathcal{W}_\beta$ considered in Proposition \ref{prop:onb}, provided we put $\beta=(1-\alpha)^{-1}$. 

Here we focus on proving this claim for the Triebel-Lizorkin type spaces ${F}_{p,q}^{s,\alpha}(\va)$ spaces. In several of the proofs in this Section, we will need the results on vector-valued multiplies that can be found in Appendix \ref{s:app}. The major technical challenge that will be addressed  is how to handle the specific structure of the tensor brushlet basis, where each atom has $2^d$ ``humps'' in the time domain due to the structure of the univariate brushlets, c.f., Eqs.\ \eqref{eq:gw} and \eqref{eq:G}. The $\alpha$-modulation spaces ${M}_{p,q}^{s,\alpha}(\va)$ are easier to handle due to their (simpler) structure, and we leave the the adaptation of the proofs to this case for the reader.

 Let us first study the canonical coefficient operator for the tensor brushlet basis.
 We use the notation introduced in Eq.\ \eqref{eq:Kj}. Let $j\geq 0$ and let $R_k^j\in\mathcal{K}_j$. 
Write $$R_k^j=I_1\times I_2\times\cdots \times I_d$$ and put $\delta_{R_k^j}=\text{diag}(|I_1|,|I_2|,\ldots,|I_d|)$. We define
\begin{equation}\label{eq:en}
\ce_{R_k^j,\cn}:= \pi\delta_{R_k^j}^{-1}  \bigl( \cn+{\textstyle
    \ca}\bigr),\qquad
 n\in \N_0^d,
 \end{equation}
where $\ca:=[\frac12,\ldots,\frac12]^T\in\bR^d$. We put
\begin{equation}
  \label{eq:QD}
  U(R^j_k,\cn)=\left\{\cy\in\bR^d:
\delta_{R_k^j}\cy-\pi{\left(\cn+\ca\right)}\in B_{\vec{a}}(0,1)\right\}.
\end{equation}
We notice that clearly $\ce_{R_k^j,\cn}\in U(R^j_k,\cn)$, and using  Lemma \ref{le:cut}.(b), 
 we have 
$|U(R^j_k,\cn)|= |R^j_k|^{-1}\asymp j^{\nu(1-\beta)}$ uniformly in $j$ and $k$.
One may verify directly from \eqref{eq:QD} that $\cup_{\cn}U(R^j_k,\cn)= \bR^d$, and there exists $L<\infty$ so that uniformly in $\cx$
and $R_k^j$,  \begin{equation}\label{eq:L1}
    \sum_{\cn\in\bN_0^d}\mathds{1}_{U(R^j_k,\cn)}(\cx)\leq L,
\end{equation}
with $\mathds{1}_A$ denoting the characteristic function of a measurable set $A$.
  We also deduce from \eqref{eq:QD}, the estimate from Lemma \ref{le:cut}.(a), and Eq.\ \eqref{eq:equiv_norm} that there exists a  constant  $K$ such that, for any $j$, $k$,
\begin{equation}\label{eq:L2}
U(R_k^j,\cn)\cap U(R_k^j,\cn')\not=\emptyset \Longrightarrow
|\cu-\cv|_{\vec{a}}\leq K j^{1-\beta},\qquad \cu\in U(R_k^j,\cn),\cv\in U(R_k^j,\cn').
\end{equation}

We can now prove that the canonical coefficient operator is bounded on
$F^s_{\vp,q}(\va)$ in the following sense.

\begin{myprop}\label{prop:normchar}
Suppose $s\in\bR$, $0<p< \infty$, and $0<q< \infty$. Then
$$\|\mathcal{S}_q^s(f)\|_{L_p(\bR^d)}\leq C\|f\|_{{F}^{s,\alpha}_{p,q}(\va)},\qquad f\in {F}^{s,\alpha}_{\vp,q}(\va),$$
where
\begin{equation}
    \label{eq:Sq}
\mathcal{S}_q^s(f):=\Big(\sum_{j=0}^\infty\sum_{R \in \mathcal{K}_j} \sum_{\cn\in \bN_0^d} \big(|R|^{s/\nu}|\langle
f,w_{R,\cn}\rangle|\widetilde{\mathds{1}}_{U(R,\cn)}(\cdot)\big)^q \Big)^{1/q},
    \end{equation}
with $\widetilde{\mathds{1}}_{U(R,n)}:=|R|^{1/2}\mathds{1}_{U(R,n)}$. 
\end{myprop}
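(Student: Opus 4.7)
The plan is to estimate $\mathcal{S}_q^s(f)$ pointwise by an anisotropic vector-valued Hardy--Littlewood maximal expression applied to the BAPU pieces $\{\phi_{R'}(D)f\}_{R'\in\mathcal{K}}$, and then invoke the Fefferman--Stein type vector-valued maximal inequality recorded in Appendix \ref{s:app}. Throughout, I write $R'\sim R$ to mean that the Fourier support of $w_{R,\cn}$ meets the support of $\phi_{R'}$.

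First I would exploit frequency localization. Each brushlet $w_{R,\cn}$ has Fourier support inside a fixed enlargement of $R$, inherited from the $\epsilon$-cutoffs of the underlying univariate local trigonometric bases (Appendix \ref{app:brush}). Since $\Phi$ is a BAPU subordinate to the admissible covering $\{\tilde{R}\}$, only a uniformly bounded number of indices $R'$ can satisfy $R'\sim R$, and hence
$$\langle f, w_{R,\cn}\rangle = \sum_{R'\sim R}\langle \phi_{R'}(D)f,\, w_{R,\cn}\rangle.$$

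The technical core is a pointwise time-domain estimate for each coupling $\langle \phi_{R'}(D)f, w_{R,\cn}\rangle$. By the tensor-product structure of Appendix \ref{app:brush}, $w_{R,\cn}$ concentrates around the $2^d$ reflected points $\cx_{R,\cn,\sigma}:=\sigma\cdot\ce_{R,\cn}$, $\sigma\in\{-1,+1\}^d$, where $\sigma\cdot\cx$ denotes coordinate-wise sign flip, with rapid anisotropic decay
$$|w_{R,\cn}(\cx)|\le C_N\,|R|^{1/2}\sum_{\sigma}\brac{\delta_{R}(\cx-\cx_{R,\cn,\sigma})}^{-N},\qquad N>\nu.$$
Combining this with the Fourier localization of $\phi_{R'}(D)f$ on $\tilde{R}'$, a Peetre-type argument (in the spirit of \cite[Section 4]{Borup2008}) delivers, for any $0<r<\infty$ and any $\cy_\sigma$ in the $\sigma$-reflection $U(R,\cn)_\sigma$ of $U(R,\cn)$,
$$|\langle \phi_{R'}(D)f, w_{R,\cn}\rangle|\le C\,|R|^{-1/2}\sum_\sigma \bigl(M_{\vec{a}}[|\phi_{R'}(D)f|^{r}](\cy_\sigma)\bigr)^{1/r},$$
where $M_{\vec{a}}$ is the anisotropic Hardy--Littlewood maximal operator. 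Choosing $\cy_\sigma=\sigma\cdot\cx$ for $\cx\in U(R,\cn)$ and summing over $\cn$ using the bounded-overlap property \eqref{eq:L1}, I obtain
$$\sum_{\cn}|\langle \phi_{R'}(D)f, w_{R,\cn}\rangle|\widetilde{\mathds{1}}_{U(R,\cn)}(\cx)\le C\sum_\sigma\bigl(M_{\vec{a}}[|\phi_{R'}(D)f|^{r}](\sigma\cdot\cx)\bigr)^{1/r},$$
uniformly in $R$ and $R'\sim R$. Using $|R|^{s/\nu}\asymp |R'|^{s/\nu}$ for $R'\sim R$ together with a $q$-triangle inequality, this upgrades to
$$\mathcal{S}_q^s(f)(\cx)\le C\sum_\sigma\Bigl(\sum_{R'\in\mathcal{K}}|R'|^{sq/\nu}\bigl(M_{\vec{a}}[|\phi_{R'}(D)f|^{r}](\sigma\cdot\cx)\bigr)^{q/r}\Bigr)^{1/q}.$$

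The final step is to take $L_p$-norms: fix $r<\min\{p,q\}$, change variables $\cx\mapsto\sigma\cdot\cx$ in each of the $2^d$ summands (an $L_p$-isometry), and apply the anisotropic vector-valued Fefferman--Stein inequality of Appendix \ref{s:app} to conclude that $\|\mathcal{S}_q^s(f)\|_{L_p}\le C\|f\|_{F_{p,q}^{s,\alpha}(\vec{a})}$. The main obstacle I anticipate is the pointwise decay estimate for $w_{R,\cn}$: the $2^d$ humps must be tracked uniformly across all scales $j\geq 0$ and all rectangles $R\in\mathcal{K}_j$, and the constants must be shown to depend only on the anisotropy $\vec{a}$ and the calibrated cutoff radii $c_i$ from Lemma \ref{le:cut}. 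Establishing this in the tensor-product setting—and carefully absorbing every reflection $\sigma$ into the final maximal bound—is where Lemma \ref{le:cut} and the univariate estimates of Appendix \ref{app:brush} do most of the work.
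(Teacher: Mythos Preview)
Your proposal is correct and follows essentially the same architecture as the paper's proof: exploit the $2^d$-hump structure of the tensor brushlets, pass through the anisotropic Peetre maximal function, handle the reflections $\sigma\in\{-1,1\}^d$ by substitution, and finish with the vector-valued Fefferman--Stein inequality.

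There is one organizational difference worth flagging. The paper does \emph{not} begin by decomposing $f$ into BAPU pieces. Instead it uses the exact identity, obtained by expanding each cosine in \eqref{eq:brush1} into complex exponentials,
\[
|\langle f,w_{R,\cn}\rangle|\le C\,|R|^{-1/2}\sum_{m=1}^{2^d}\big|(b_{R}(D)f)(O_m\ce_{R,\cn})\big|,
\]
so that the inner product is controlled by \emph{point evaluations} of the bell-multiplier $b_R(D)f$ rather than by an integral against the decaying envelope of $w_{R,\cn}$. After Peetre and Fefferman--Stein this leaves an $L_p(\ell_q)$-norm of $\{|R|^{s/\nu}b_R(D)f\}_R$, and a separate vector-valued multiplier result (Theorem~\ref{th:vecmult}) is then invoked to pass from the bell multipliers $b_R$ to the BAPU pieces $\phi_{R'}$. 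Your route---insert the BAPU partition first and then integrate against the envelope of $w_{R,\cn}$---trades that multiplier theorem for a convolution-against-Peetre estimate. Both are standard; your version is arguably more self-contained, while the paper's point-evaluation identity is sharper at the level of the single inner product and cleanly separates the ``hump'' analysis from the multiplier step.
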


\begin{proof}
Take $f\in  \mathcal{S}(\bR^d)\subset {F}^{s,\alpha}_{p,q}(\va)$ and consider   $R_k^j\in \mathcal{K}_{j}$,  for some $j\in \bN$, i.e.,  $|R_k^j|\asymp j^{\nu(\beta -1)}$.
Let  $T_k^i(\cdot):=j^{(\beta-1)\vec{a}} \cdot+\cc_k^j$ be defined as in Eq.\ \eqref{eq:affine}, and let $\ce_{n,R_j^k}$ be defined as in Eq.\ \eqref{eq:en}.

We define the diagonal matrices
\begin{equation}\label{eq:umm}
    O_m:=\text{diag}(\cv_m),
\end{equation}
with $\cv_m\in\R^d$ chosen such that $\cup_{m=1}^{2^d}\cv_m= \{-1,1\}^d$.

We write the cosine term in Eq.\ \eqref{eq:brush1} as a sum of complex exponentials, and we take a tensor product to create $w_{\cn,R}$ . This process creates a multivariate function with $2^d$ "humps", and, as it turns out, we will consequently need $2^d$ terms to control the inner product $\langle
f,w_{R_j^k,\cn}\rangle$. By \eqref{eq:brush1} and \eqref{eq:S},
\begin{align}
|\langle
f,w_{R_j^k,\cn}\rangle|&\leq \frac{2^{d/2}}{|R_j^k|^{1/2}}\sum_{m=1}^{2^d}|(b_{R_j^k}(D)f)(O_m \ce_{R_j^k,\cn})|\nonumber\\
&\leq c_{d,q} \frac{2^{d/2}}{|R_j^k|^{1/2}}\Big(\sum_{m=1}^{2^d}|(b_{R_j^k}(D)f)(O_m \ce_{R_j^k,\cn})|^q\Big)^{1/q}.\label{eq:innerp}
\end{align}
We notice by estimate (c) in Lemma  \ref{le:cut} there exists a constant $c_7>0$, independent of $j$ and $k$, such that
\begin{equation}\label{eq:structured}
    \text{supp}(b_{R_k^j})\subseteq T_k^j(c_7[-1,1]^d).
\end{equation}
Hence, for $0<t<\min(p,q)$, and $\cx\in\bR^d$, it follows from \eqref{eq:innerp} that
\begin{align}
\sum_{\cn\in \bN_0^d}& \big(|{R_j^k}|^{s/\nu}|\langle
f,w_{{R_j^k},\cn}\rangle|\widetilde{\mathds{1}}_{U({R_j^k},\cn)}(\cx)\big)^q\nonumber\\
&\leq c_{d,q}^q\sum_{\cn\in \bN_0^d}\sum_{m=1}^{2^d}|{R_j^k}|^{sq/\nu} |(b_{R_j^k}(D)f)(O_m \ce_{\cn,R_j^k})|^q\mathds{1}_{U({R_j^k},\cn)}(\cx)
\nonumber
\\
&\le  c_{d,q}^q\sum_{m=1}^{2^d}\sum_{\cn\in \bN_0^d}
|{R_j^k}|^{sq/\nu}\bigg(\sup_{\cy\in U(R_j^k,\cn)}|(b_{R_j^k}(D)f)(O_m \cy)|\bigg)^q\mathds{1}_{U({R_j^k},\cn)}(\cx)
\nonumber
\\
&\le c_L\sum_{m=1}^{2^d}
|{R_j^k}|^{sq/\nu}\bigg(\sup_{\cz\in B_{\vec{a}}(0,Kj^{1-\beta})}|(b_{R_j^k}(D)f)(O_m (\cx-\cz)| \langle j^{(\beta-1)\vec{a}}O_m\cz\rangle_{\vec{a}}^{-\nu/t}\nonumber\\
&\hspace{7cm}\times\langle j^{(\beta-1)\vec{a}}O_m\cz\rangle_{\vec{a}}^{\nu /t}\bigg)^q\nonumber\\
&\le c_{L,K}\sum_{m=1}^{2^d}\bigg(|{R_j^k}|^{sq/\nu}\sup_{\cw\in \bR^d}\frac{|(b_{R_j^k}(D)f)(O_m\cx-\cw)| }{\brac{j^{(\beta-1)\vec{a}}\cw}^{\nu/t}}\bigg)^q\nonumber\\
&= c_{L,K}\sum_{m=1}^{2^d}\bigg(|{R_j^k}|^{s/\nu}\sup_{\cv\in \bR^d}\frac{|(b_{R_j^k}(D)f)(\cv)|}{\brac{j^{(\beta-1)\vec{a}}(O_m\cx-\cv)}^{\nu/t}}\bigg)^q,
\label{Thmainp6}
\end{align}
where we have used \eqref{eq:L1}, \eqref{eq:L2}, and the fact   that $B_{\vec{a}}(0,Kj^{1-\beta})$ is invariant under the transformations $\cy\rightarrow O_m \cy$. We now continue the estimate \eqref{Thmainp6} by calling on the Peetre maximal-function estimate \eqref{M3} to obtain
\begin{align}
\sum_{\cn\in \bN_0^d} (|{R_j^k}|^{s/\nu}|\langle
f,w_{\cn,{R_j^k}}\rangle|\widetilde{\mathds{1}}_{U({R_j^k},\cn)}(\cx))^q
&\leq c'\sum_{m=1}^{2^d}\bigg(|{R_j^k}|^{s/\nu}\mathcal{M}_t\big((b_{R_j^k}(D)f\big)(O_m(\cx))\bigg)^q.
\label{Thmainp88}
\end{align}
Hence,
\begin{align}
\|\mathcal{S}_q^s(f)\|_{L_p(\bR^d)}&\leq C_a
\bigg\|\bigg(\sum_{m=1}^{2^d} \sum_{j=0}^\infty\sum_{R_j^k \in \mathcal{K}_j}\big(|{R_j^k}|^{s/\nu}\mathcal{M}_t\big((b_{R_j^k}(D)f\big)(O_m(\cdot))\big)^q\bigg)^{1/q}\bigg\|_{L_p(\bR^d)}\nonumber\\
&\leq C_b\bigg\|\bigg(\sum_{m=1}^{2^d} \sum_{j=0}^\infty\sum_{R_k^j \in \mathcal{K}_j}\big(|{R_j^k}|^{s/\nu}\big|b_{R_j^k}(D)f(O_m(\cdot))\big|\big)^q\bigg)^{1/q}\bigg\|_{L_p(\bR^d)}\nonumber\\
&\leq C_c\sum_{m=1}^{2^d}\bigg\|\bigg( \sum_{j=0}^\infty\sum_{R_k^j \in \mathcal{K}_j}\big(|{R_j^k}|^{s/\nu}\big|b_{R_j^k}(D)f(O_m(\cdot))\big|\big)^q\bigg)^{1/q}\bigg\|_{L_p(\bR^d)}\nonumber\\
&\leq C_d\bigg\|\bigg( \sum_{j=0}^\infty\sum_{R_k^j \in \mathcal{K}_j}\big(|{R_j^k}|^{s/\nu}\big|b_{R_j^k}(D)f(\cdot)\big|\big)^q\bigg)^{1/q}\bigg\|_{L_p(\bR^d)},
\label{Thmainp99}
\end{align}
where we used the Fefferman-Stein maximal inequality, see \eqref{eq:fs}, the (quasi-)triangle inequality on $L_p(\ell_q)$, and $2^d$ substitutions of the form $\cy=O_m(\cx)$ in the integrals.

Guided by the observation in Eq.\ \eqref{eq:structured}, we now wish to apply Theorem \ref{th:vecmult} with the multipliers $\{b_{R_k^j}\}_{j,k}$ in \eqref{Thmainp99}.  We first notice that each multiplier $b_{R_j^k}(T_k^j\,\cdot\,)$ is smooth with 
$$\text{supp}\big\{b_{R_j^k}(T_k^j\,\cdot\,)\big\}\subseteq c_7[-1,1]^d,$$
and from Eq.\ \eqref{eq:GG}, we have the relation
$$b_{R_j^k}(T_k^j\,\cdot\,)=\widehat{G_{R_k^j}}\big(\delta_{R_k^j}^{-1}(j^{(\beta-1)\vec{a}} \cdot+\cc_k^j-\bm{\alpha}_{R_k^j})\big),$$
where $\widehat{G_{R_k^j}}$ is the centralized bell-function defined in Eq.\ \eqref{eq:Gf}. By Lemma \ref{le:cut}.(a), we deduce that the matrix representation of $\delta_{R_k^j}^{-1}j^{(\beta-1)\vec{a}}$ satisfies
$$\|\delta_{R_k^j}^{-1}j^{(\beta-1)\vec{a}}\|_{\ell_\infty(\bR^{d\times d})}\leq c<\infty,$$
for some $c$ independent of $j$ and $k$. We may therefore use the chain-rule, and the properties from Eqs.\ \eqref{eq:gb} and \eqref{eq:epsilon}, to conclude that there exists constants $K_\beta$, $\beta\in\bN_0^d$, independent of $j$ and $k$, such that 
$$\big\|\partial^\beta\big[b_{R_j^k}(T_k^j\,\cdot\,)\big]\big\|_{L_\infty(\bR^d)}\leq K_\beta.$$
We then obtain the estimate, for any $N\in\bN$,
\begin{align*}
|[\mathcal{F}^{-1}b_{R_j^k}(T_k^j\,\cdot\,)](\cx)|&\leq c(1+|\cx|)^{-N}\bigg| \sum_{\beta\in\bN_0^d: |\beta|\leq N} x^\beta [\mathcal{F}^{-1}b_{R_j^k}(T_k^j\,\cdot\,)](x)\bigg|\\
&\leq c' (1+|\cx|)^{-N} \sum_{\beta\in\bN_0^d: |\beta|\leq N} \|\partial^\beta\big[b_{R_j^k}(T_k^j\,\cdot\,)\big]\|_{L_1(\bR^d)}\\
&\leq c''\bigg(\sum_{\beta\in\bN_0^d: |\beta|\leq N} K_\beta\bigg) (1+|\cx|)^{-N},\qquad \cx\in\bR^d.
\end{align*}
Using the estimate \eqref{ad8}, it now follows easily that the hypotheses of Theorem \eqref{th:vecmult} are satisfied  for the multipliers $\{b_{R_k^j}\}_{j,k}$. 
Now notice that $$b_{R_k^j}(D)f=b_{R_k^j}(D)\sum_{(\ell,m)\in F_{R_k^j}} \phi_{R_m^\ell}(D) f,$$
where 
$$F_{R_k^j}:=\{(\ell,m): \tilde{R}_k^j\cap \tilde{R}_m^\ell\not=\emptyset\}.$$
Since we have an $\alpha$-covering, there is a universal constant $M>0$ such that $\# F_{R_k^j}\leq M$, and, with constants independent of $j$ and $k$,
$|{R}_k^j|\asymp |{R}_m^\ell|$ for $(\ell,m) \in F_{R_k^j}$ by (d) of Lemma \ref{le:cut}.
We now continue estimate \eqref{Thmainp99}  and obtain
\begin{align*}
\|\mathcal{S}_q^s(f)\|_{L_p(\bR^d)}&
\leq C_d\bigg\|\bigg( \sum_{j=0}^\infty\sum_{R_k^j \in \mathcal{K}_j}\Big(|{R_j^k}|^{s/\nu}\big|b_{R_j^k}(D)f(\cdot)\big|\Big)^q\bigg)^{1/q}\bigg\|_{L_p(\bR^d)}\\
&
\leq C_d\bigg\|\bigg( \sum_{j=0}^\infty\sum_{R_k^j \in \mathcal{K}_j}\Big(|{R_j^k}|^{s/\nu}\Big|b_{R_k^j}(D)\sum_{(\ell,m)\in F_{R_k^j}}\phi_{R_m^\ell}(D)f\Big|\Big)^q\bigg)^{1/q}\bigg\|_{L_p(\bR^d)}\\
&
\leq C_d'\bigg\|\bigg( \sum_{j=0}^\infty\sum_{R_k^j \in \mathcal{K}_j}\sum_{(\ell,m)\in F_{R_k^j}}\Big(|{R_j^k}|^{s/\nu}\Big|b_{R_k^j}(D)\phi_{R_m^\ell}(D)f\Big|\Big)^q\bigg)^{1/q}\bigg\|_{L_p(\bR^d)}\\
&\leq C_d''\bigg\|\bigg( \sum_{j=0}^\infty\sum_{R_k^j \in \mathcal{K}_j}\sum_{(\ell,m)\in F_{R_k^j}}\Big(|{R_m^\ell}|^{s/\nu}\Big|\phi_{R_m^\ell}(D)f\Big|\Big)^q\bigg)^{1/q}\bigg\|_{L_p(\bR^d)}\\
&= C_d''\bigg\|\bigg( \sum_{j=0}^\infty\sum_{R_k^j \in \mathcal{K}_j} \sum_{\ell=0}^\infty\sum_{R_m^\ell \in \mathcal{K}_\ell}\mathds{1}_{F_k^j}(\ell,m)\Big(|{R_m^\ell}|^{s/\nu}\Big|\phi_{R_m^\ell}(D)f\Big|\Big)^q\bigg)^{1/q}\bigg\|_{L_p(\bR^d)}\\
&= C_d''\bigg\|\bigg(  \sum_{\ell=0}^\infty\sum_{R_m^\ell \in \mathcal{K}_\ell}\bigg[\sum_{j=0}^\infty\sum_{R_k^j \in \mathcal{K}_j}\mathds{1}_{F_k^j}(\ell,m)\bigg]\Big(|{R_m^\ell}|^{s/\nu}\Big|\phi_{R_m^\ell}(D)f\Big|\Big)^q\bigg)^{1/q}\bigg\|_{L_p(\bR^d)}\\
&\leq C_d'''\bigg\|\bigg(  \sum_{\ell=0}^\infty\sum_{R_m^\ell \in \mathcal{K}_\ell}\Big(|{R_m^\ell}|^{s/\nu}\Big|\phi_{R_m^\ell}(D)f\Big|\Big)^q\bigg)^{1/q}\bigg\|_{L_p(\bR^d)}\\
&\asymp
\|f\|_{{F}^{s,\alpha}_{p,q}(\va)},
\end{align*}
where we used Theorem \ref{th:vecmult}, Tonelli's theorem to change the summation order, and the uniform bound on the cardinality of the sets $F_m^\ell$. Finally, to conclude the proof, we simply extend the estimate to all of $ {F}^{s,\alpha}_{p,q}(\va)$, using the fact that  $\mathcal{S}(\R^d)$ is  dense in $ {F}^{s,\alpha}_{p,q}(\va)$. 
\end{proof}

\begin{myre}
The proof of Proposition \ref{prop:normchar} can be simplified considerably provided it is possible to replace the cosines in \eqref{eq:brush1} by localized exponential functions in the frequency domain as building blocks for the tensor basis construction. However, the author is not aware of any such construction of orthonormal bases that are also well-localized. Moreover, the Balian-Low Theorem, see \cite[Theorem 8.4.1]{Groechenig2001}, provides strong evidence that such constructions are not possible in general.  
\end{myre}

Inspired by Proposition \ref{prop:normchar}, we define the sequence space
${f}^{s,\alpha}_{p,q}(\vec{a})$ for $s\in \bR$, $0<p<\infty$, and
$0<q\leq \infty$ , as the set of sequences 
$\{s_{R,\cn}\}_{R\in \mathcal{K},\cn\in\bN_0^d}\subset \mathbb{C}$ satisfying
\begin{equation}\label{eq:f}
\|\{s_{R,\cn}\}\|_{{f}^{s,\alpha}_{p,q}(\vec{a})}:= \bigg\| 
\Big\{ \sum_{\cn\in \bN_0^d} |R|^{s/\nu}|s_{R,\cn}|\widetilde{\mathds{1}}_{U(R,\cn)}(\cdot) \Big\}_{R\in \mathcal{K}}
\bigg\|_{L_p(\ell_q)}<\infty,
\end{equation}
where we let $\widetilde{\mathds{1}}_{U(R,n)}:=|R|^{1/2}\mathds{1}_{U(R,n)}$ and we use the ordering on the countable set $\mathcal{K}$ given in \eqref{eq:K}.   
In a similar fashion, we we define the sequence space
${m}^{s,\alpha}_{p,q}(\vec{a})$ for $s\in \bR$, $0<p<\infty$, and
$0<q\leq \infty$ , as the set of sequences 
$\{s_{R,\cn}\}_{R\in \mathcal{K},\cn\in\bN_0^d}\subset \mathbb{C}$ satisfying
\begin{equation}\label{eq:m}
\|\{s_{R,\cn}\}\|_{{m}^{s,\alpha}_{p,q}(\vec{a})}:= \bigg\| 
\Big\{ \sum_{\cn\in \bN_0^d} |R|^{s/\nu}|s_{R,\cn}|\widetilde{\mathds{1}}_{U(R,\cn)}(\cdot) \Big\}_{R\in\mathcal{K}}
\bigg\|_{\ell_q(L_p)}<\infty.
\end{equation}
The (quasi-)norm on ${m}^{s,\alpha}_{p,q}(\vec{a})$ can be expressed as a discrete mixed-norm by noticing that the sum over $\cn$ is uniformly finite by  \eqref{eq:L1}, so by the equivalence of quasi-norms on $\bR^L$,
\begin{equation}\label{eq:locallyf}
\bigg(\sum_{\cn\in \bN_0^d} |R|^{s/\nu}|s_{R,\cn}|\widetilde{\mathds{1}}_{U(R,\cn)}(\cdot)\bigg)^p\asymp
\sum_{\cn\in \bN_0^d} |R|^{\frac{sp}{\nu}+\frac{p}2}|s_{R,\cn}|^p\mathds{1}_{U(R,\cn)}(\cdot).
\end{equation}
Hence, using that $\|\mathds{1}_{U(R,\cn)}\|_{L^p}\asymp |R|^{-1/p}$,
\begin{equation}\label{eq:malt}
\|\{s_{R,\cn}\}\|_{{m}^{s,\alpha}_{p,q}(\vec{a})}\asymp \bigg(\sum_{R\in\mathcal{K}}|R|^{(\frac{s}{\nu}+\frac{1}2-\frac{1}p)q} \bigg(\sum_{\cn\in\bN_0^d} |s_{R,\cn}|^p\bigg)^{q/p}\bigg)^{1/q}.
\end{equation}
We have the following estimate.

\begin{myprop}\label{lem:reconstruct}
  Suppose  $s\in \bR$, $0<p< \infty$, and $0<q\leq \infty$. Then for
  any finite sequence
  $\{s_{R,\cn}\}_{(R,\cn)\in\mathcal{F}}$, with $\mathcal{F}\subset \mathcal{K}\times \bN_0^d$, we have 
  $$\Bigl\|\sum_{(R,\cn)\in\mathcal{F}} s_{R,\cn}w_{R,\cn}\Bigr\|_{\dot{F}_{p,q}^{s,\alpha}(\va)} \leq C
  \|\{s_{R,\cn}\}\|_{{f}^{s,\alpha}_{p,q}(\va)}.$$
\end{myprop}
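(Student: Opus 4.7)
The plan is to adapt the argument of Proposition~\ref{prop:normchar} in reverse: for each $R'\in\mathcal{K}$ I would estimate $|R'|^{s/\nu}|\phi_{R'}(D)f(\cy)|$ pointwise, where $f:=\sum_{(R,\cn)\in\mathcal{F}}s_{R,\cn}w_{R,\cn}$, then take the $\ell_q$-sum over $R'$ and the $L_p$-norm in $\cy$. The heart of the argument is a bump-by-bump estimate combined with the Fefferman-Stein vector-valued maximal inequality.

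I would first exploit the (enlarged) frequency support of $\widehat{w_{R,\cn}}$ and of $\phi_{R'}$ to see that $\phi_{R'}(D)w_{R,\cn}$ vanishes unless $R\sim R'$, meaning $\tilde R\cap \tilde R'\neq\emptyset$. The bounded-overlap property of the $\alpha$-covering (Definition~\ref{def:1}) caps the number of such $R$ by a uniform constant $M$, and Lemma~\ref{le:cut}(b) and (d) yield $|R|\asymp|R'|$ whenever $R\sim R'$. Next, exploiting the tensor-product structure and expanding cosines into complex exponentials as in \eqref{eq:brush1}, each $w_{R,\cn}$ decomposes into $2^d$ time-domain bumps centered at the sign-reflected points $O_m\ce_{R,\cn}$, with $O_m$ as in \eqref{eq:umm}, producing for arbitrarily large $N$
$$
|w_{R,\cn}(\cy)|\lesssim |R|^{1/2}\sum_{m=1}^{2^d}\bigl(1+|R|^{1/\nu}|\cy-O_m\ce_{R,\cn}|_{\vec{a}}\bigr)^{-N}.
$$
Since $\phi_{R'}$ is a smooth frequency-multiplier supported in $\tilde R'$, whose localization is matched to the scale $|R'|^{-1/\nu}\asymp|R|^{-1/\nu}$, a standard convolution estimate transfers the same decay to $\phi_{R'}(D)w_{R,\cn}$.

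I would then pick $0<t<\min(p,q)$ with $t\le 1$ and $N>\nu/t$ (both possible since $N$ is free). Because the sets $U(R,\cn)$ from \eqref{eq:QD} are anisotropic balls of radius $\asymp|R|^{-1/\nu}$ around $\ce_{R,\cn}$, are invariant in shape under the sign reflections $O_m$, and satisfy the bounded-overlap property \eqref{eq:L1}, a shell-decomposition argument split according to the anisotropic distance $|R|^{1/\nu}|\cy-O_m\ce_{R,\cn}|_{\vec{a}}$ will produce the Peetre-type bound
$$
\sum_{\cn}|s_{R,\cn}|\,|R|^{1/2}\bigl(1+|R|^{1/\nu}|\cy-O_m\ce_{R,\cn}|_{\vec{a}}\bigr)^{-N}
\lesssim \mathcal{M}_t\Big(\sum_{\cn}|s_{R,\cn}|\widetilde{\mathds{1}}_{U(R,\cn)}\Big)(O_m^{-1}\cy).
$$
Combining with the pointwise bound from the previous step and setting $g_R:=\sum_{\cn}|R|^{s/\nu}|s_{R,\cn}|\widetilde{\mathds{1}}_{U(R,\cn)}$, I arrive at the key inequality
$$
|R'|^{s/\nu}|\phi_{R'}(D)f(\cy)|\lesssim \sum_{R\sim R'}\sum_{m=1}^{2^d}\mathcal{M}_t(g_R)(O_m^{-1}\cy).
$$

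I would close by taking $\ell_q$ over $R'$ and then $L_p$ in $\cy$. The uniform bound $M$ on the number of neighbors of each $R'$, together with the (quasi-)triangle inequality in $\ell_q$, converts $\sum_{R\sim R'}$ into a summation over $R\in\mathcal{K}$ up to a constant. For each of the finitely many $m$, the substitution $\cx:=O_m^{-1}\cy$ preserves Lebesgue measure and removes $O_m^{-1}$ from the argument of $\mathcal{M}_t(g_R)$. The Fefferman-Stein vector-valued maximal inequality \eqref{eq:fs} (valid for $t<\min(p,q)$) then removes $\mathcal{M}_t$, yielding
$$
\|f\|_{{F}^{s,\alpha}_{p,q}(\va)}\lesssim \Big\|\Big(\sum_{R\in\mathcal{K}}g_R^q\Big)^{1/q}\Big\|_{L_p(\bR^d)}=\|\{s_{R,\cn}\}\|_{{f}^{s,\alpha}_{p,q}(\va)},
$$
with the case $q=\infty$ handled by the usual modifications. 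The main obstacle throughout is the $2^d$-hump structure of each tensor brushlet: it blocks a single-bump Peetre reduction and forces $2^d$ parallel maximal-function estimates, later collapsed by invoking Lebesgue-invariance of the reflections $O_m$ before Fefferman-Stein—precisely the synthesis-side counterpart of the $2^d$-term decomposition used in the proof of Proposition~\ref{prop:normchar}.
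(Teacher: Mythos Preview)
Your proposal is correct and follows essentially the same strategy as the paper's proof: reduce to neighboring $R\sim R'$ via frequency support, bound the $2^d$-hump brushlets by shifted kernels with polynomial decay, convert to a maximal-function estimate via a shell decomposition (this is precisely Lemma~\ref{lem:maxbound} in the paper), then strip the reflections $O_m$ by substitution and apply Fefferman--Stein. The one genuine difference is in how you dispose of the multiplier $\phi_{R'}(D)$: the paper invokes the vector-valued multiplier result Theorem~\ref{th:vecmult} to pass from $\phi_{R'}(D)\big(\sum_{R\in N_{j,k}}\sum_{\cn}s_{R,\cn}w_{R,\cn}\big)$ directly to $\sum_{R\in N_{j,k}}\sum_{\cn}|s_{R,\cn}||w_{R,\cn}|$ in the $L_p(\ell_q)$-norm, whereas you instead argue pointwise that the convolution kernel $\mathcal{F}^{-1}\phi_{R'}$ has matched-scale decay and therefore preserves the bump estimate on $|w_{R,\cn}|$. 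Your route is slightly more elementary (it avoids the $H^s_2$ machinery of Theorem~\ref{th:vecmult}) but requires you to redo by hand the almost-orthogonality that the multiplier theorem packages; the paper's route is cleaner bookkeeping at the cost of invoking a heavier tool. Either way the argument lands on the same Fefferman--Stein step.
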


\begin{proof}
Let $\{\phi_{R_k^j}\}_{j,k}$ be the bounded partition of unity defined in 
\eqref{eq:bap}. Using the structure given by \eqref{eq:brush1}, and Proposition \ref{th:vecmult}, we get
  \begin{align*}
    \Bigl\|\sum_{(R,\cn)\in\mathcal{F}} s_{R,\cn}w_{R,\cn}\Bigr\|_{\dot{F}_{p,q}^{s,\alpha}(\va)} &= \Bigl\|
    \Big\{|R_k^j|^{s/\nu} \phi_{R_k^j}(D)\Big(\sum_{(R,\cn)\in\mathcal{F}} 
    s_{R,\cn}w_{R,\cn}\Big)\Big\}_{j,k}\Bigr\|_{L_{{p}}(\ell_q)}\\
    &\leq C\Bigl\|\Big\{
    |R_k^j|^{s/\nu} \sum_{R \in N_{j,k}} \sum_{\cn\in\bN_0^d}
    s_{R,\cn}w_{R,\cn}\Big\}_{j,k}\Bigr\|_{L_{{p}}(\ell_q)},
\end{align*}
where $N_{j,k}= \{R'\in \mathcal{K}\colon \tilde{R}_k^j\cap \supp
(b_{R'})\neq \emptyset\}$. It follows easily from the properties of $\alpha$-coverings  that $\# N_{j,k}$ is
uniformly bounded and that $|R'|\asymp |R_k^j|$ (uniformly) for $R'\in N_{j,k}$. Hence, we have 
\begin{equation}\Bigl\|
   \Big\{
    |R_k^j|^{s/\nu} \sum_{R \in N_{j,k}} \sum_{\cn\in\bN_0^d}
    s_{R,\cn}w_{R,\cn}\Big\}_{j,k}\Bigr\|_{L_{{p}}(\ell_q)} \leq
    C\biggl\|\biggl( \sum_{R\in \mathcal{K}} \Bigl( |R|^{s/\nu}  \sum_{\cn\in\bN_0^d}
    |s_{R,\cn}| |w_{R,\cn}|\Bigr)^q\biggr)^{1/q}\biggr\|_{{p}}.
    \end{equation}
We fix $0<r<\min(p,q)$. To simplify the notation, we use the ordering on $\mathcal{K}$ from \eqref{eq:K}. Then Lemma \ref{lem:maxbound} and the
Fefferman-Stein maximal inequality \eqref{eq:fs} yield
\begin{align*}
    \Bigl\|\Big\{
    |R|^{s/\nu} & \sum_{\cn\in\bN_0^d} |s_{R,\cn}| |w_{R,\cn}|\Big\}_{R\in\mathcal{K}}
\Bigr\|_{L_{{p}}(\ell_q)}\\
    &\leq C_d\Bigl\|\Big\{
    |R|^{s/\nu} |R|^{1/2} \sum_{m=1}^{2^d} \mathcal{M}_r\Bigl(\sum_{\cn\in\bN_0^d}
  |s_{R,\cn}|
  \chi_{U(R,\cn)}\Bigr)(O_{m}\cdot)\Big\}_{R\in\mathcal{K}}
\Bigr\|_{L_{{p}}(\ell_q)}\\
&\leq C_d'\Bigl\|\Big\{
    |R|^{s/\nu} |R|^{1/2} \sum_{\cn\in\bN_0^d}
  |s_{R,\cn}| \mathds{1}_{U(R,\cn)}\Big\}_{R\in\mathcal{K}}\Bigr\|_{L_{{p}}(\ell_q)}\\
  &= C_d'\Bigl\|\Big\{
    |R|^{s/\nu} \sum_{\cn\in\bN_0^d}
  |s_{R,\cn}| \widetilde{\mathds{1}}_{U(R,\cn)}\Big\}_{R\in\mathcal{K}}\Bigr\|_{L_{{p}}(\ell_q)},\\
  &=C_d'\|\{s_{R,\cn}\}\|_{{f}^{s,\alpha}_{p,q}(\va)},
  \end{align*}
  where we used the (quasi-)triangle inequality and straightforward substitutions, $O_m\cx\rightarrow \cy$, in the integrals.

\end{proof}

We now use Proposition \ref{prop:normchar} and Proposition
\ref{lem:reconstruct} to obtain the main result of this paper, that for any $\alpha\in[0,1)$, $\mathcal{W}:=\{w_{R,\cn}\}_{R\in\mathcal{K},\cn\in\bN_0^d}$ forms an orthonormal basis that universally 
captures the norm of ${B}^{s,\alpha}_{p, q}(\va)$ and ${F}^{s,\alpha}_{p, q}(\va)$. Moreover, the system forms an unconditional basis for  ${B}^{s,\alpha}_{p, q}(\va)$ and for ${F}^{s,\alpha}_{p, q}(\va)$ in the Banach space case. 
Calling on Proposition \ref{prop:normchar}, we may define 
a bounded coefficient operator
$C_{\mathcal{W}}\colon F^{s,\alpha}_{p,q}(\va)\rightarrow f^{s,\alpha}_{p,q}(\va)$ by
$$C_{\mathcal{W}}f=\{\langle f,w_{R,\cn}\rangle\}_{R\in\mathcal{K},\cn\in\bN_0^d}.$$ 
By Proposition \ref{lem:reconstruct},  
the corresponding reconstruction operator 
$R_{\mathcal{W}}:f^{s,\alpha}_{p,q}(\va)\rightarrow F^{s,\alpha}_{p,q}(\va)$, defined by
$$R_{\mathcal{W}}\big(\{s_{R,\cn}\}_{R,\cn}\big)=\sum_{j=0}^\infty\sum_{k=1}^{|\mathcal{K}_j|}\sum_{\cn\in\bN_0^d} s_{R_k^j,\cn} w_{R_k^j,n},$$
extends to a bounded operator. We summarize these findings in the following theorem.

\begin{mytheorem}\label{prop:modu}
Let $s\in \bR$, $\alpha\in [0,1)$, and  $0<p,q< \infty$. Then we have the bounded operators $C_{\mathcal{W}}\colon F^{s,\alpha}_{p,q}(\va)\rightarrow f^{s,\alpha}_{p,q}(\va)$ and
$R_{\mathcal{W}}:f^{s,\alpha}_{p,q}(\va)\rightarrow F^{s,\alpha}_{p,q}(\va)$, satisfying
$$R_{\mathcal{W}}\circ C_{\mathcal{W}}=\text{Id}_{F^{s,\alpha}_{p,q}(\va)},$$
making $F^{s,\alpha}_{p,q}(\va)$ a retract of $f^{s,\alpha}_{p,q}(\va)$.
Similarly, we have the bounded operators $C_{\mathcal{W}}\colon M^{s,\alpha}_{p,q}(\va)\rightarrow m^{s,\alpha}_{p,q}(\va)$ and
$R_{\mathcal{W}}:m^{s,\alpha}_{p,q}(\va)\rightarrow M^{s,\alpha}_{p,q}(\va)$, satisfying
$$R_{\mathcal{W}}\circ C_{\mathcal{W}}=\text{Id}_{M^{s,\alpha}_{p,q}(\va)},$$
making $M^{s,\alpha}_{p,q}(\va)$ a retract of $m^{s,\alpha}_{p,q}(\va)$.
Consequently,  we have the norm characterizations 
$$ \|f\|_{{F}^{s,\alpha}_{p,q}(\va)}\asymp  \|
C_{\mathcal{W}}f\|_{f^{s,\alpha}_{p,q}(\va)},\qquad f\in {F}^{s,\alpha}_{p,q}(\va),$$
and
$$ \|f\|_{{M}^{s,\alpha}_{p,q}(\va)}\asymp  \|
C_{\mathcal{W}}f\|_{m^{s,\alpha}_{p,q}(\va)},\qquad f\in {M}^{s,\alpha}_{p,q}(\va).$$ 
Moreover, in the Banach space case, i.e., for $1\leq p<\infty$ and $1\leq q<\infty$,  the brushlet system $\mathcal{W}$ forms an unconditional basis for both  ${M}^{s,\alpha}_{p,q}(\va)$ and ${F}^{s,\alpha}_{p,q}(\va)$.
\end{mytheorem}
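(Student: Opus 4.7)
The plan is to combine Proposition~\ref{prop:normchar} (boundedness of the coefficient operator on Schwartz functions) and Proposition~\ref{lem:reconstruct} (boundedness of the reconstruction operator on finite sequences) with a density argument to obtain the reproducing identity $R_{\mathcal{W}}\circ C_{\mathcal{W}}=\mathrm{Id}$, from which the norm equivalence and the retract structure follow immediately. Unconditionality in the Banach range will then be a consequence of the lattice (absolute-value) structure of $f^{s,\alpha}_{p,q}(\va)$ and $m^{s,\alpha}_{p,q}(\va)$.

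First, I would extend $R_{\mathcal{W}}$ from finite sequences to all of $f^{s,\alpha}_{p,q}(\va)$. Since $p,q<\infty$, finite sequences are dense in $f^{s,\alpha}_{p,q}(\va)$ (the $L_p(\ell_q)$-norm in \eqref{eq:f} is absolutely continuous in both the $L_p$ and $\ell_q$ factors), and Proposition~\ref{lem:reconstruct} applied to tails shows that partial sums of $\sum_{R,\cn} s_{R,\cn}w_{R,\cn}$ are Cauchy in $F^{s,\alpha}_{p,q}(\va)$. This furnishes a unique bounded linear extension $R_{\mathcal{W}}\colon f^{s,\alpha}_{p,q}(\va)\to F^{s,\alpha}_{p,q}(\va)$, and by Proposition~\ref{prop:normchar} the coefficient operator $C_{\mathcal{W}}$ is bounded in the opposite direction after extending from $\mathcal{S}$ by density.

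Next, I would establish $R_{\mathcal{W}}\circ C_{\mathcal{W}}=\mathrm{Id}$. For $f\in\mathcal{S}\subset L_2(\bR^d)$, Proposition~\ref{prop:onb}(iii) tells us $\mathcal{W}_\beta$ is an orthonormal basis of $L_2(\bR^d)$, so the partial sums $S_N f:=\sum_{(R,\cn)\in\Lambda_N}\langle f,w_{R,\cn}\rangle w_{R,\cn}$ converge to $f$ in $L_2(\bR^d)$ along any exhaustion $\Lambda_N\uparrow\mathcal{K}\times\bN_0^d$. By the construction of the extension, these same partial sums converge to $R_{\mathcal{W}}C_{\mathcal{W}}f$ in $F^{s,\alpha}_{p,q}(\va)$. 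Since both $L_2(\bR^d)$ and $F^{s,\alpha}_{p,q}(\va)$ embed continuously in $\mathcal{S}'$, the two limits agree as tempered distributions, so $R_{\mathcal{W}}C_{\mathcal{W}}f=f$ on $\mathcal{S}$. Using density of $\mathcal{S}$ in $F^{s,\alpha}_{p,q}(\va)$ (recorded below Definition~\ref{def:complete}) and continuity of both $C_{\mathcal{W}}$ and $R_{\mathcal{W}}$, I extend the identity to all of $F^{s,\alpha}_{p,q}(\va)$. The two-sided norm estimate and the retract property follow at once.

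For unconditionality in the Banach case $1\le p,q<\infty$, the decisive point is that the norms in \eqref{eq:f} and \eqref{eq:malt} depend only on the moduli $|s_{R,\cn}|$. Hence for every sign pattern $\epsilon=(\epsilon_{R,\cn})\in\{\pm 1\}^{\mathcal{K}\times\bN_0^d}$, the operator $f\mapsto R_{\mathcal{W}}(\epsilon\cdot C_{\mathcal{W}}f)=\sum \epsilon_{R,\cn}\langle f,w_{R,\cn}\rangle w_{R,\cn}$ is bounded on $F^{s,\alpha}_{p,q}(\va)$ with norm independent of $\epsilon$, which combined with convergence of partial sums to $f$ gives unconditional convergence of the brushlet expansion. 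The parallel statements for $M^{s,\alpha}_{p,q}(\va)$ follow by the same scheme once the $\ell_q(L_p)$-versions of Propositions~\ref{prop:normchar} and \ref{lem:reconstruct} are available. The main obstacle I anticipate is in fact hidden in that adaptation: the discrete mixed norm \eqref{eq:malt} interacts with the $2^d$-hump tensor structure of the brushlets differently than the $L_p(\ell_q)$-norm, and the Peetre maximal-function argument must be recast so that the sum over the $2^d$ orientations $O_m$ can be pulled out at the level of $\ell_q(L_p)$ rather than $L_p(\ell_q)$; this is the calculation the authors defer, but once done the composition argument above applies verbatim.
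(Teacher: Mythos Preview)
Your proposal is correct and follows essentially the same route as the paper: boundedness of $C_{\mathcal{W}}$ and $R_{\mathcal{W}}$ from Propositions~\ref{prop:normchar} and~\ref{lem:reconstruct}, the identity $R_{\mathcal{W}}\circ C_{\mathcal{W}}=\mathrm{Id}$ first on $\mathcal{S}(\bR^d)$ via the $L_2$-orthonormality of $\mathcal{W}$ (Proposition~\ref{prop:onb}), extension by density of $\mathcal{S}$ in $F^{s,\alpha}_{p,q}(\va)$, and unconditionality in the Banach range from the fact that the sequence norms depend only on $|s_{R,\cn}|$. Your version is slightly more explicit (the $\mathcal{S}'$-limit comparison and the density of finite sequences in $f^{s,\alpha}_{p,q}(\va)$), but the paper's proof is the same in substance; the $M$-space case is likewise deferred in the paper.
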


\begin{proof}
  The boundedness claims about $C_{\mathcal{W}}$ and $R_{\mathcal{W}}$ follows at once from Propositions \ref{prop:normchar} and \ref{lem:reconstruct}, respectively. Also, notice that 
$  R_{\mathcal{W}}\circ C_{\mathcal{W}}$ acts as the identity on the subset $\mathcal{S}(\bR^d)\subset L_2(\bR^d)$ due to the fact that $\mathcal{W}$ is an orthonormal basis for $L_2(\bR^d)$. This identity can then be extended to all of 
  $F^{s,\alpha}_{p,q}(\va)$ using the fact that $\mathcal{S}(\bR^d)$ is dense in $F^{s,\alpha}_{p,q}(\va)$.
  The claim that the system forms an unconditional
basis when $1\leq p, q < \infty$ follows easily from the fact that  $\dot{F}^{s,\alpha}_{p,q}(\va)$ is a Banach space, and
that finite expansions in  $\{w_{R,\cn}\}$ have uniquely determined coefficients giving us a norm characterization of such expansions by the $L_{p}$-norm of the quantity $\mathcal{S}_q^{s}(\cdot)$  defined in Eq.\ \eqref{eq:Sq}.   The proof in the case of $M^{s,\alpha}_{p,q}(\va)$ and $m^{s,\alpha}_{p,q}(\va)$ is similar and is left for the reader.  
\end{proof}

\begin{myre}
The norm characterization obtained in Theorem \ref{prop:modu} may appear similar to the characterization obtained for tight frames in, e.g., \cite[Theorem 7.5]{AlJawahri2019} and \cite[Theorem 6.4]{Borup2008}, but one should notice the important additional  fact that we now have at our disposal, namely that $\{w_{R,\cn}\}$ forms an \textit{orthonormal basis}. The orthogonality ensures that the maps  $C_{\mathcal{W}}$ and $R_{\mathcal{W}}$ considered in Theorem \ref{prop:modu} are in fact {\em isomorphisms}. This fact has significant implications for the applicability of Theorem \ref{prop:modu}. One specific example is to the study of  $m$-term nonlinear approximation with the system $\{w_{R,\cn}\}$, where the linear independence will allow one to obtain inverse estimates of Bernstein type as will be derived in Section \ref{s:approx} below. Inverse estimates are currently out of reach for the redundant frames considered in \cite{Borup2008,AlJawahri2019}, and for redundant wavelet type systems in general, see the discussion of this long-standing open problem in \cite{MR1794807}. 
 \end{myre}
\section{Nonlinear approximation with anisotropic brushlet bases}\label{s:approx}
In this section we will derive several results on  nonlinear $m$-term approximation  with the tensor brushlet system $\mathcal{W}_\beta$ introduced in Proposition \ref{prop:onb}. Let us first introduce some notation and concepts from nonlinear approximation theory.   

Let
$\Dict=\{g_k\}_{k\in\N}$ be a  system in a quasi-Banach space $X$, where we assume that $\text{span}(\Dict)$ is dense in $X$.
We consider the collection of all possible $m$-term expansions with
elements from $\Dict$:
$$\Sigma_m(\Dict):=\Big\{\sum_{i\in \Lambda} c_i g_i\: \Big|\: c_i\in\mathbb{C},
\# \Lambda\leq m\Big\}.$$
The error of the best $m$-term approximation to an element $f\in X$
is then
$$\sigma_m(f,\Dict)_X:=\inf_{f_m\in \Sigma_m(\Dict)} \|f-f_m\|_{X}.$$

We classify functions according to the decay-rate of $\sigma_m$ using the following notion of approximation spaces, see \cite{DeVore1993} for further details.

\begin{mydef}
Let $0\leq \gamma<\infty$ and $0<q\leq \infty$.  The approximation space $\mathcal{A}^\gamma_{\, q}(X,\Dict)$ is defined to be the collection of $f\in X$ for which
  \[|f|_{\mathcal{A}^\gamma_{\, q}(X,\Dict)}:=\bigg(\sum_{m=1}^\infty
  \big(m^\gamma \sigma_m(f,\Dict)_X\big)^q
  \frac{1}{m}\bigg)^{1/q}<\infty.\]
The family $\mathcal{A}^\gamma_{\, q}(X,\Dict)$ is (quasi)normed by
  $\|f\|_{\mathcal{A}^\gamma_{\, q}(X,\Dict)} = \|f\|_X +
  |f|_{\mathcal{A}^\gamma_{\, q}(X,\Dict)}$ for $0<q,\gamma<\infty$, with the
  $\ell_q$ norm replaced by the sup-norm, when $q=\infty$.
\end{mydef}

Suppose $0\leq \alpha<1$ and $0<p,q<\infty$. We let $\mathcal{W}:=\mathcal{W}_{(1-\alpha)^{-1}}=\{w_{R,\cn}\}_{R\in \mathcal{K},\cn\in \N_0^d}$ be the brushlet system considered in Proposition \ref{prop:onb}.
The goal in this section is to obtain quantitative information about $\mathcal{A}^\gamma_{\, q}(X,\mathcal{W})$ for various choices of $X$. We will rely on the fact that $\mathcal{W}$ is a stable non-redundant system supporting the corresponding norm-characterizations from Theorem~\ref{prop:modu}. We mention that the results below therefore cover dimensions $d\geq 2$. Nonlinear approximation with brushlet bases in the simpler case $d=1$ has been considered earlier by Borup and the author in \cite{Borup2006b}.

The fact that $M^{s,\alpha}_{p,q}$ is a retract of $m^{s,\alpha}_{p,q}$ and $F^{s,\alpha}_{p,q}$ is a retract of $f^{s,\alpha}_{p,q}$, as shown in Theorem~\ref{prop:modu}, turns out to be very helpful for this purpose. 
 Consider the sequence $\mathbf{1}_{(R,\cn)}$ defined to have value $1$ at position $(R,\cn)\in \mathcal{K}\times \bN_0^d$ and $0$ elsewhere on $\mathcal{K}\times \bN_0^d$. Clearly, $R_{\cW}({\mathbf{1}}_{(R,\cn)})=w_{R,\cn}$, and 
consequently, for the dictionary of sequences defined by $\Dict_d:=\{\mathbf{1}_{(R,\cn)}\}_{R,\cn}$, 
we  have $R_{\cW}\Sigma_m(\Dict_d)=\Sigma_m(\cW)$ for $m\in\bN$.
It is therefore straightforward to verify that we have the identifications (up to equivalence of norm),
\begin{equation}\label{eq:approxs}
R_{\cW}\mathcal{A}^\gamma_{\, q}(m^{s,\alpha}_{p,q}(\va),\Dict_d)
=\mathcal{A}^\gamma_{\, q}(M^{s,\alpha}_{p,q}(\va),\cW),\qquad
R_{\cW}\mathcal{A}^\gamma_{\, q}(f^{s,\alpha}_{p,q}(\va),\Dict_d)
=\mathcal{A}^\gamma_{\, q}(F^{s,\alpha}_{p,q}(\va),\cW).
\end{equation}

Let us first focus on the modulation-type space. For $0<p<\infty$, we define the corresponding normalized sequences
$$\widetilde{\mathbf{1}}_{(R,\cn)}=\frac{\mathbf{1}_{(R,\cn)}}{\|\mathbf{1}_{(R,\cn)}\|_{m^{s,\alpha}_{p,p}(\va)}},\qquad
R\in \mathcal{K},\cn\in \N_0^d,$$
where we notice directly from \eqref{eq:malt} that (uniformly) $\|\mathbf{1}_{(R,\cn)}\|_{m^{s,\alpha}_{p,p}(\va)}\asymp |R|^{s/\nu+1/2-1/p}$.
We also observe from \eqref{eq:m}  that for any finite sequence $\cs=\{s_{r,\cn}\}$ on $\mathcal{K}\times\N_0^d$,  we have the
uniform estimate
\begin{equation}\label{eq:norm}
\|\cs\|_{m^{s,\alpha}_{p,p}(\va)}=
\bigg\|\sum_{(R,\cn)\in\mathcal{K}\times\N_0^d}
s_{R,\cn}\widetilde{\mathbf{1}}_{(R,\cn)}\bigg\|_{m^{s,\alpha}_{p,p}(\va)}\asymp \bigg(\sum_{R,\cn} |s_{R,\cn}|^p\bigg)^{1/p},
\end{equation}
which shows that $m^{s,\alpha}_{p,p}(\va)$ constitutes a $p$-space in the terminology used in  \cite{MR1247523,Garrigos2004}.

The family of discrete Lorentz spaces for sequences defined on $\mathcal{K}\times\bN_0^d$ will be needed for the analysis of $\mathcal{A}^\gamma_{\, q}(X,\mathcal{W})$. For $0<p,r<\infty$, we let $\ell_{p,r}$ consists of the sequences $\cs=\{s_{R,\cn}\}_{R\in\mathcal{K},\cn\in\bN^d}$ with $\lim s_{R,\cn}=0$, where for  an enumeration $\{I_k\}_{k=1}^\infty$ of $\mathcal{K}\times\bN_0^d$ such that $|s_{I_1}|\geq |s_{I_2}|\geq \cdots$, we have
\begin{equation}\label{eq:lorentz}
    \|\cs\|_{\ell_{p,r}}:=\bigg[\sum_{k=1}^\infty (k^{1/p}|s_{I_k}|)^r\frac{1}{k}\bigg]^{1/r}<+\infty.
\end{equation}
For $r=\infty$, $\ell_{p,\infty}$ is the discrete weak $\ell_p$-space consisting of sequences satisfying 
$$\|\cs\|_{\ell_{p,\infty}}:=\sup_{k\in\bN}  k^{1/p}|s_{I_k}|<+\infty.$$ 
The discrete Lorentz spaces $\ell_{p,r}(\bN)$ for sequences on $\bN$ are defined in a similar way. 

We may use the discrete Lorentz spaces to define a notion of smoothness spaces in order to characterize various
approximation spaces. Let
$\Dict=\{g_k\}_{k\in\N}$ be a {\em quasi-normed} system in $X$ in the sense that $\|g_k\|_X\asymp 1$, $k\in\bN$. For $\tau\in
(0,\infty)$ and $s\in (0,\infty]$, we define the space
$$\K^{\tau}_{s}(X,\Dict):= \{ f\in X\colon f=\sum_{k\in \bN} c_kg_k
\text{  unconditionally in }X,\; g_k\in \Dict,\;
\{c_k\}_{k\in \N}\in \ell_{\tau,s}(\N)\},$$
with $\|f\|_{\K^{\tau}_{\ s}(X,\Dict)} :=
\|\{c_k\}\|_{\ell_{\tau,s}}$. 

For the system 
\begin{equation}\label{eq:tilde}
    \widetilde{\Dict_p}:=\{\widetilde{\mathbf{1}}_{(R,\cn)}\}_{(R,\cn)\in \cK\times\bN_0^d}
\end{equation} in $m^{s,\alpha}_{p,p}(\va)$, it follows directly from the equivalence in \eqref{eq:norm} that $\K^{p}_{p}(m^{s,\alpha}_{p,p}(\va),\Dict_p)=\ell_{p,p}$, and using the embedding 
$\ell_{\tau,q}\hookrightarrow \ell_{p,p}$ for $0<\tau<p$, $0<q\leq \infty$, we may conclude that
\begin{equation}\label{eq:Kdisk}
\K^{\tau}_{q}(m^{s,\alpha}_{p,p}(\va),\Dict_p)=\ell_{\tau,q}.
 \end{equation}
We will now prove that the approximation spaces for tensor brushlet systems $\mathcal{W}$ with approximation error measured in a suitable $\alpha$-modulation spaces can be identified with 
smoothness spaces for a suitably normalized version of $\mathcal{W}$. The  smoothness spaces can, in certain favorable cases, be identified with  
$\alpha$-modulation spaces.

\begin{myprop}\label{prop:equiv}
 Let $0\leq \alpha<1$, and let $\{w_{R,\cn}\}_{R\in \cK,\cn\in \N_0^d}$ be the orthonormal brushlet system 
associated with the $\alpha$-covering considered in Proposition \ref{prop:onb}. Let
$\Dict=\big\{{w_{R,\cn}}/{\|\mathbf{1}_{(R,\cn)}\|_{m^{s,\alpha}_{p,p}(\va)}}\big\}_{R\in\cK,
  \cn\in\N_0^d}$ for some $s> 0$ and $0<p<\infty$. Then
 $$\mathcal{A}_{\, q}^\gamma\big(M^{s,\alpha}_{p,p}(\va),\Dict\big) =
   \K^{\tau}_{q}\big(M^{s,\alpha}_{p,p}(\va),\Dict\big),\qquad
   \frac{1}{\tau}=\gamma+\frac{1}{p},\,\gamma>0,\, 0<q\leq \infty,$$
 with equivalent norms. Moreover, for $\tau<p$,
$$
\K^{\tau}_{ \tau}\big(M^{s,\alpha}_{p,p}(\va)),\Dict\big)=M^{\beta,\alpha}_{\tau,\tau}(\va),\qquad
\text{ with }\quad \beta=\frac{\nu}{\tau}-\frac{\nu}{p}+s.$$
\end{myprop}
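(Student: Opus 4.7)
The strategy is to transfer the two identifications to the sequence level through the retract $M^{s,\alpha}_{p,p}(\va)\simeq m^{s,\alpha}_{p,p}(\va)$ provided by Theorem \ref{prop:modu}. Under the reconstruction operator $R_\cW$ the sequence dictionary $\Dict_d$ passes to the function-space dictionary $\Dict$, and the relations in \eqref{eq:approxs} allow me to read off both conclusions from their sequence-side counterparts applied to the normalized dictionary $\widetilde{\Dict_p}$ from \eqref{eq:tilde}.

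For the first identity I invoke the Jackson--Bernstein framework for $p$-spaces, c.f.\ \cite{Gribonval2004}. By the equivalence \eqref{eq:norm}, $m^{s,\alpha}_{p,p}(\va)$ is a $p$-space with basis $\widetilde{\Dict_p}$, meaning the norm of a finite expansion agrees up to constants with the $\ell_p$-norm of its coefficients. The Jackson inequality then follows by Stechkin's lemma applied coordinatewise, and the Bernstein inequality is immediate from the trivial embedding of $m$-term $\ell_p$-sequences into weak-$\ell_\tau$ whenever $1/\tau=1/p+\gamma$; note that the restriction $\tau<p$ needed later is automatic here since $\gamma>0$. The standard characterisation of approximation spaces in terms of real interpolation then gives $\mathcal{A}^\gamma_q=\K^\tau_q$ for all $0<q\le\infty$ at the sequence level, and lifting via $R_\cW$ and Theorem \ref{prop:modu} yields the function-side claim.

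For the second identity I simply match exponents. By \eqref{eq:Kdisk}, a sequence $\cs=\{s_{R,\cn}\}$ lies in $\K^\tau_\tau(m^{s,\alpha}_{p,p}(\va),\widetilde{\Dict_p})$ if and only if the rescaled sequence $s_{R,\cn}/\|\mathbf{1}_{(R,\cn)}\|_{m^{s,\alpha}_{p,p}(\va)}$ belongs to $\ell_\tau$, where $\ell_{\tau,\tau}=\ell_\tau$. Using the uniform equivalence $\|\mathbf{1}_{(R,\cn)}\|_{m^{s,\alpha}_{p,p}(\va)}\asymp|R|^{s/\nu+1/2-1/p}$ together with the explicit discrete mixed-norm \eqref{eq:malt} applied to $m^{\beta,\alpha}_{\tau,\tau}(\va)$, one sees that $\|\cs\|_{m^{\beta,\alpha}_{\tau,\tau}(\va)}$ collapses to a pure $\ell_\tau$-norm, after cancelling the renormalization factor, precisely when
\[
\tau\Bigl(\tfrac{\beta}{\nu}+\tfrac{1}{2}-\tfrac{1}{\tau}\Bigr)=\tau\Bigl(\tfrac{s}{\nu}+\tfrac{1}{2}-\tfrac{1}{p}\Bigr),
\]
that is, exactly when $\beta=s+\nu/\tau-\nu/p$. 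The assumption $\tau<p$ is used to guarantee the Lorentz embedding $\ell_{\tau,\tau}\hookrightarrow\ell_{p,p}$ underlying \eqref{eq:Kdisk}, which in turn ensures unconditional convergence of the expansion in $m^{s,\alpha}_{p,p}(\va)$.

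The technical heart of the argument is the Bernstein step, and the crucial input there is the non-redundancy of the orthonormal brushlet system $\cW$, not any cancellation or localization property. Since $C_\cW$ is an isomorphism of $M^{s,\alpha}_{p,p}(\va)$ onto its image in $m^{s,\alpha}_{p,p}(\va)$, every $m$-term expansion in $\Dict$ has uniquely determined coefficients supported on at most $m$ indices, which is exactly what licenses the $\ell_p$-to-weak-$\ell_\tau$ counting estimate. Without this the Bernstein inequality would fail; with it, all the remaining work reduces to the exponent bookkeeping sketched above.
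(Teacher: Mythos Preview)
Your proposal is correct and follows essentially the same route as the paper: work at the sequence level via the retract of Theorem~\ref{prop:modu}, invoke the $p$-space characterisation of approximation spaces (the paper cites \cite[Theorem~6.1]{Garrigos2004} rather than \cite{Gribonval2004}, which treats only the Jackson direction), and lift back with $R_\cW$. One small slip: in your second paragraph the rescaled coefficients should be $s_{R,\cn}\cdot\|\mathbf{1}_{(R,\cn)}\|_{m^{s,\alpha}_{p,p}(\va)}$ rather than divided, since $\cs=\sum s_{R,\cn}\mathbf{1}_{(R,\cn)}=\sum \bigl(s_{R,\cn}\|\mathbf{1}_{(R,\cn)}\|\bigr)\widetilde{\mathbf{1}}_{(R,\cn)}$; your exponent matching is unaffected.
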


\begin{proof}
The proof of the first claim relies on the  formalism of DeVore and Popov \cite{MR942269}. In the present setup, we may  in fact call directly on \cite[Theorem 6.1]{Garrigos2004}, using that $\widetilde{\Dict_p}$ satisfies the stability condition \eqref{eq:norm}, to conclude that
$$\mathcal{A}_{\, q}^\gamma\big(m^{s,\alpha}_{p,p}(\va),\widetilde{\Dict_p}\big)=\ell_{\tau,q},\qquad
\frac{1}{\tau}=\gamma+\frac{1}{p},\,\gamma>0,\, 0<q\leq \infty.
$$
Then, according to \eqref{eq:approxs} and \eqref{eq:Kdisk},
$$\mathcal{A}_{\, q}^\gamma\big(M^{s,\alpha}_{p,p}(\va))=R_{\cW}\mathcal{A}_{\, q}^\gamma\big(m^{s,\alpha}_{p,p}(\va))=R_{\cW}\ell_{\tau,q}=\K^{\tau}_{q}\big(M^{s,\alpha}_{p,p}(\va),\Dict\big).$$
To prove the  second
claim, we use the fact that $\|\mathbf{1}_{(R,\cn)}\|_{m^{s,\alpha}_{p,p}(\va)}\asymp |R|^{s/\nu+1/2-1/p}$. For $f\in \K^{\tau}_{ \tau}(M^{s,\alpha}_{p,p}(\va),\Dict)$, we therefore have
\begin{equation}
  \label{eq:normm}
  \|f\|_{\K^{\tau}_{ \tau}(M^{s,\alpha}_{p,p}(\va)),\Dict)}\asymp
\bigg(\sum_{R,\cn}|R|^{(\frac{s}{\nu}+\frac{1}2-\frac{1}p)\tau}|\langle f,w_{R,\cn}\rangle|^\tau\bigg)^{1/\tau}.
\end{equation}
Suppose $\tau<p$ and define $\beta>0$  by the equation $\beta/\nu+1/2-1/\tau=s/\nu+ 1/2-1/p\Rightarrow \beta=\nu/\tau-\nu/p+s$. By
Theorem \ref{prop:modu} and Eq.\ \eqref{eq:normm},
\begin{align*}
\|f\|_{M^{\beta,\alpha}_{\tau,\tau}(\va)}&\asymp\|\{\langle f,w_{R,\cn}\}\|_{{m}^{s,\alpha}_{\tau,\tau}(\vec{a})}\\&\asymp \bigg(\sum_{R\in\mathcal{K}}|R|^{(\frac{\beta}{\nu}+\frac{1}2-\frac{1}\tau)\tau} \sum_{\cn\in\bN_0^d} |\langle
f,w_{R,\cn}\rangle|^\tau\bigg)^{1/\tau}\\
&=\bigg(\sum_{R\in\mathcal{K}}|R|^{(\frac{s}{\nu}+\frac{1}2-\frac{1}p)\tau} \sum_{\cn\in\bN_0^d} |\langle
f,w_{R,\cn}\rangle|^\tau\bigg)^{1/\tau}\\
&\asymp 
\|f\|_{\K^{\tau}_{ \tau}(M^{s,\alpha}_{p,p}(\va)),\Dict)}.
\end{align*}
This concludes the proof.
\end{proof}

\subsection{Direct and inverse estimates for $m$-term approximation with tensor product brushlet systems}
In this section we establish Jackson (direct) and Bernstein (inverse) type inequalities
for functions from the $\alpha$-modulation spaces, with the error
measured in suitable $\alpha$-Triebel-Lizorkin spaces. We
will rely on  the same general type of arguments as used by Kyriazis in
\cite{MR1866250}, where similar estimates were established for non-redundant wavelet systems. However, in all estimates we have to take into account the specific polynomial structure of the $\alpha$-coverings.

\subsection{Jackson inequality}
In order to prove a Jackson inequality, we  first need to prove a technical lemma that will be
of use in obtaining both the Jackson and Bernstein estimate.  Lemma \ref{lem:Imax} is perhaps the most important contribution in this section as it shows how to handle the specific geometry of the $\alpha$-coverings. We mention that the lemma takes into account the (rather significant) point-wise overlap of the sets $\{U(R,\cn): R\in\mathcal{K}_j,\cn\in\bN_0^d\}$. 

\begin{mylemma}\label{lem:Imax}
Given $0<\alpha< 1$, let $\beta:=(1-\alpha)^{-1}$ and let $\cK$ be the disjoint $\alpha$-covering considered in Lemma \ref{le:cut}.
For $\Lambda \subset \cK\times \N_0^d$, with $\# \Lambda <\infty$, we define
 $I_{\Lambda}(\cx) = \max \{|R|\,\mathbf{1}_{U(R,\cn)}(\cx):  (R,\cn)\in \Lambda\}.$ 
Then, for any $q>0$ there exists a constant $C:=C(q)$, independent of $\Lambda$, such that,
  $$\sum_{(R,\cn)\in\Lambda}|R|^{q-\nu\frac{1-\alpha}{\alpha}}\mathbf{1}_{U(R,\cn)}(\cx) \leq C
  I_{\Lambda}(\cx)^{q},$$
for any $q>0$.
\end{mylemma}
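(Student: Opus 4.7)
The plan is to partition $\Lambda$ by the corridor index $j$, exploiting the uniform size estimate $|R|\asymp j^{\nu(\beta-1)}=j^{\nu\alpha/(1-\alpha)}$ from Lemma~\ref{le:cut}(b) together with two combinatorial ingredients: a sharp cardinality estimate $|\mathcal{K}_j|\lesssim j^{\nu-1}$ and the uniform overlap bound \eqref{eq:L1}, which says that for each fixed $R$ at most $L$ indices $\cn$ satisfy $\cx\in U(R,\cn)$. With $\cx$ fixed, I let $j_0=j_0(\cx)$ denote the largest index for which some $(R,\cn)\in\Lambda$ with $R\in\mathcal{K}_j$ has $\cx\in U(R,\cn)$; the size estimate then gives $I_\Lambda(\cx)\asymp j_0^{\nu\alpha/(1-\alpha)}$ when $j_0\geq 1$ and $I_\Lambda(\cx)\asymp 1$ when $j_0=0$.

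For the cardinality claim I would argue by volume comparison: the rectangles of $\mathcal{K}_j$ tile the corridor $K_j=\{\cx:j^{\beta}\leq|\cx|_{\va,\infty}<(j+1)^{\beta}\}$, whose Lebesgue measure is $2^d[(j+1)^{\nu\beta}-j^{\nu\beta}]\asymp j^{\nu\beta-1}$ by the mean value theorem, while each rectangle has volume $\asymp j^{\nu(\beta-1)}$ by Lemma~\ref{le:cut}(b). Taking the quotient yields $|\mathcal{K}_j|\asymp j^{\nu-1}$. Combined with \eqref{eq:L1}, the number of pairs in $\Lambda$ contributing at level $j$ is at most $L|\mathcal{K}_j|\lesssim j^{\nu-1}$.

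Assembling the level-wise estimates,
\begin{align*}
\sum_{(R,\cn)\in\Lambda}|R|^{q-\nu(1-\alpha)/\alpha}\mathbf{1}_{U(R,\cn)}(\cx)
&\lesssim 1+\sum_{j=1}^{j_0}j^{\nu-1}\cdot j^{q\nu\alpha/(1-\alpha)-\nu^2}\\
&=1+\sum_{j=1}^{j_0}j^{q\nu\alpha/(1-\alpha)-\nu(\nu-1)-1}.
\end{align*}
Because $\nu\geq 1$ forces $-\nu(\nu-1)\leq 0$, the geometric-type sum is dominated by $j_0^{q\nu\alpha/(1-\alpha)-\nu(\nu-1)}\leq j_0^{q\nu\alpha/(1-\alpha)}$ when its exponent in $j$ exceeds $-1$, by $\log(1+j_0)$ when it equals $-1$, and by a $q$-dependent constant otherwise; in every case, using $q>0$ and $j_0\geq 1$, the total is $\lesssim j_0^{q\nu\alpha/(1-\alpha)}\asymp I_\Lambda(\cx)^q$, which is the claim.

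The main obstacle is securing the sharp count $|\mathcal{K}_j|\lesssim j^{\nu-1}$: the naive bound $|\mathcal{K}_j|\leq|\mathcal{A}_j|\asymp j^{\nu}$ overshoots by a factor of $j$ and would close the estimate only when $\nu=1$. What rescues the argument is precisely that $\mathcal{K}_j=\mathcal{A}_j\setminus\mathcal{B}_j$ selects only the boundary shell of the covering, which is one order of magnitude thinner in $j$ than the full $\mathcal{A}_j$; without this observation, the $\nu-\nu^2$ cancellation that makes the $j$-sum collapse to $I_\Lambda(\cx)^q$ would not occur.
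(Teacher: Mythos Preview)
Your proof is correct and follows essentially the same route as the paper: partition $\Lambda$ by corridor index, use the volume comparison $|\mathcal{K}_j|\asymp j^{\nu-1}$ and the overlap bound \eqref{eq:L1}, and sum the resulting power of $j$ up to $j_0$. The one refinement you add is the explicit case analysis on the exponent of $j$ (the paper invokes $\sum_{j=1}^N j^\eta=O(N^{\eta+1})$ only for $\eta>0$, which does not cover all $q>0$), so your treatment is in fact slightly more complete.
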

\begin{proof}
The proof relies on a counting argument. Let $\Lambda_j=\{(R,\cn)\in\Lambda: R\in \mathcal{K}_j\}$, where 
we recall from Lemma \ref{le:cut} that $R\in \cK_j\Rightarrow |R|\asymp j^{\nu(\beta-1)}$. We also notice that for any $R\in \cK_j$,  $\# \cK_j\asymp |K_j|/|R|\asymp j^{\nu\beta-1}/j^{\nu(\beta-1)}=j^{\nu-1}$. Put $$I_j(\cx)=
\begin{cases}\max\{j: \cx\in \cup_{(R,\cn)\in\Lambda_j}U(R,\cn) \},& \cx\in\bigcup_{(R,\cn)\in\Lambda} U(R,\cn),\\
0,&\text{otherwise.}
\end{cases}
$$ For convenience, we also let $a:=\nu\frac{1-\alpha}{\alpha}=\frac{\nu}{\beta-1}$.  Then, for fixed $\cx\in\bR^d$,
\begin{align*}
\sum_{(R,\cn)\in\Lambda}|R|^{q-a}\mathbf{1}_{U(R,\cn)}(\cx) &=\sum_{j=0}^\infty \sum_{(R,\cn)\in\Lambda_j}|R|^{q-a}\mathbf{1}_{U(R,\cn)}(\cx)\\&
\leq C
\sum_{j=0}^{I_j(\cx)} j^{\nu(\beta-1)\cdot(q-a)}\sum_{(R,\cn)\in\Lambda_j}\mathbf{1}_{U(R,\cn)}(\cx)\\
&\leq CL
\sum_{j=0}^{I_j(\cx)} j^{\nu(\beta-1)\cdot(q-a)}\cdot j^{\nu-1}\\
&\leq C' I_j(\cx)^{\nu(\beta-1)\cdot(q-a)+\nu}\\
&=C' I_j(\cx)^{\nu(\beta-1)\cdot[(q-a)+\frac{\nu}{\beta-1}]},
\end{align*}
where we used the estimate from Eq.\ \ref{eq:L1} and the elementary observation that $\sum_{j=1}^N j^\eta=O(N^{\eta+1})$ for $\eta>0$.
Now, notice that $I_j(\cx)^{\nu(\beta-1)}\asymp I_\Lambda(\cx)$, and that $a=\frac{\nu}{\beta-1}$, so
$$\sum_{(R,\cn)\in\Lambda}|R|^{q-a}\mathbf{1}_{U(R,\cn)}(\cx)\leq C''I_\Lambda(\cx)^q,$$
which completes the proof of the lemma.
\end{proof}

We have the following Jackson estimates for $m$-term brushlet
approximation to functions in the
$\alpha$-modulation space  $M^{\gamma,\alpha}_{\tau,\tau}(\va)$, where the
error is measured in the $\alpha$-Triebel-Lizorkin space $F^{\beta,\alpha}_{p,t}(\va)$.

\begin{myprop}\label{prop:jackson}
Let $0<\alpha < 1$, $0<\tau<p<\infty$, $0<t<\infty$,
  $-\infty<\beta<\gamma<\infty$, and let $\Dict:=\cW_{(1-\alpha)^{-1}}$ be 
the tensor-brushlet system considered in Proposition \ref{prop:onb}. Define $r$ by 
$$r = r(\nu,\alpha,p,t):=\begin{cases} 0& \text{for}\quad t\geq p,\\
  \nu\frac{1-\alpha}{\alpha} & \text{for}\quad t<p.\end{cases}$$
Suppose $1/\tau-1/p=(\gamma-\beta)/\nu -r/t$. Then there exists a constant $C$ such that for every $f\in 
  M^{\gamma,\alpha}_{\tau,\tau}(\va)$ and $m\in\bN$,
\begin{equation}\label{eq:jack}
\sigma_m(f,\Dict)_{F^{\beta,\alpha}_{p,t}(\va)}\leq
Cm^{-\frac{\gamma-\beta}{\nu}+\frac rt}
\|f\|_{M^{\gamma,\alpha}_{\tau,\tau}(\va)}.
\end{equation}
\end{myprop}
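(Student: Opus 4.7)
My plan is to push the approximation problem into sequence space through the retraction of Theorem~\ref{prop:modu}, and then perform a rearrangement-based thresholding argument on the brushlet coefficients in the spirit of Kyriazis~\cite{MR1866250}, with Lemma~\ref{lem:Imax} providing the geometric input that absorbs the overlap of the sets $\{U(R,\cn)\}$ in the $t<p$ regime.

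Set $c_{R,\cn}:=\langle f,w_{R,\cn}\rangle$ and introduce the renormalized coefficients $d_{R,\cn}:=|R|^{\gamma/\nu+1/2-1/\tau}|c_{R,\cn}|$, so that by Theorem~\ref{prop:modu} combined with~\eqref{eq:malt}, $\|f\|_{M^{\gamma,\alpha}_{\tau,\tau}(\va)}^{\tau}\asymp \sum_{R,\cn}d_{R,\cn}^{\tau}$. Let $d^*_1\ge d^*_2\ge\cdots$ be the non-increasing rearrangement, let $\Lambda_m$ index the $m$ largest, and take $f_m:=\sum_{(R,\cn)\in\Lambda_m}c_{R,\cn}w_{R,\cn}\in\Sigma_m(\Dict)$. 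The only fact I will use beyond the above identity is the weak-type bound $d^*_k\le k^{-1/\tau}\|f\|_{M^{\gamma,\alpha}_{\tau,\tau}(\va)}$, which is immediate.

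By Theorem~\ref{prop:modu} on the Triebel--Lizorkin side, together with the pointwise equivalence \eqref{eq:locallyf}, and substituting $|c_{R,\cn}|=|R|^{-(\gamma/\nu+1/2-1/\tau)}d_{R,\cn}$,
$$\|f-f_m\|_{F^{\beta,\alpha}_{p,t}(\va)}\asymp \biggl\|\biggl(\sum_{(R,\cn)\notin\Lambda_m}|R|^{((\beta-\gamma)/\nu+1/\tau)t}\,d_{R,\cn}^{t}\,\mathbf{1}_{U(R,\cn)}\biggr)^{1/t}\biggr\|_{L_p(\bR^d)}.$$
When $t\ge p$ (so $r=0$), Minkowski's inequality in $L_{p/t}$ (valid since $p/t\le 1$) and $\|\mathbf{1}_{U(R,\cn)}\|_{L_p}\asymp |R|^{-1/p}$ reduce the right-hand side to a weighted $\ell_t$-sum of $\{d^*_k\}$ whose $|R|$-weight has exponent $((\beta-\gamma)/\nu+1/\tau-1/p)t$, which vanishes thanks to the hypothesis $1/\tau-1/p=(\gamma-\beta)/\nu$. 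A standard discrete Hardy estimate applied to the tail $\{d^*_k\}_{k>m}$ then yields the rate $m^{-(\gamma-\beta)/\nu}\|f\|_{M^{\gamma,\alpha}_{\tau,\tau}(\va)}$.

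When $t<p$, the overlap of $\{U(R,\cn)\}$ is no longer harmless and must be controlled via the $\alpha$-covering geometry. The plan is to split the $|R|$-weight as $|R|^{((\beta-\gamma)/\nu+1/\tau)t}=|R|^{A}\cdot|R|^{q-\nu(1-\alpha)/\alpha}$ for a suitable choice of $q=q(\alpha,\beta,\gamma,\tau,t)$, apply Lemma~\ref{lem:Imax} to the second factor to dominate it pointwise by $I_{\Lambda}(\cx)^{q}$, and then invoke H\"older with exponents $p/t$ and $(p/t)'$ inside $L_p$. This replaces the $\ell_t$-sum over overlapping cells by a pointwise maximum, and that is precisely where the factor $m^{r/t}$ enters: the $L_p$-norm of $I_\Lambda(\cx)^{q/t}$ can be estimated in terms of $|\Lambda_m^c\cap\text{active cells}|^{\nu(1-\alpha)/(\alpha t)}$. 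The balancing identity $1/\tau-1/p=(\gamma-\beta)/\nu-r/t$ makes the remaining $|R|$-exponents cancel, after which the rearrangement bound $d^*_k\le k^{-1/\tau}\|f\|$ and another Hardy estimate close the argument. The hard part will be calibrating the split of the $|R|$-weight together with the H\"older exponents so that the $|R|$- and $m$-powers simultaneously satisfy the prescribed equation; once this bookkeeping is correct, the estimate reduces to classical rearrangement inequalities and the direct Jackson bound falls out.
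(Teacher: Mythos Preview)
Your overall strategy---pushing to sequence space via Theorem~\ref{prop:modu} and running a thresholding argument on the renormalized coefficients $d_{R,\cn}$---is exactly what the paper does, and your treatment of the case $t\ge p$ is essentially the paper's argument (modulo the harmless slip that the subadditivity of $\|\cdot\|_{L_{p/t}}^{p/t}$ produces an $\ell_p$-sum of the $d^*_k$, not an $\ell_t$-sum; the Hardy-type tail bound then reads $\sum_{k>m}(d^*_k)^p\le M^p\sum_{k>m}k^{-p/\tau}\asymp M^p m^{1-p/\tau}$).

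The gap is in the case $t<p$. Lemma~\ref{lem:Imax} is stated and proved only for \emph{finite} index sets $\Lambda$, and this hypothesis is not removable: for the full complement $\Lambda_m^c$ one has $I_{\Lambda_m^c}(\cx)=+\infty$ at \emph{every} point $\cx\in\bR^d$, since for each level $j$ all but finitely many of the (infinitely many) pairs $(R,\cn)$ with $R\in\mathcal{K}_j$ lie in $\Lambda_m^c$, and the corresponding sets $U(R,\cn)$ cover $\bR^d$. Consequently any ``second factor'' of the form $\sum_{(R,\cn)\notin\Lambda_m}|R|^{q-\nu(1-\alpha)/\alpha}\mathbf{1}_{U(R,\cn)}(\cx)$---which no longer carries the decaying weights $d_{R,\cn}$---is identically $+\infty$, and both Lemma~\ref{lem:Imax} and the subsequent H\"older step become vacuous. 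Your phrase ``$|\Lambda_m^c\cap\text{active cells}|$'' does not rescue this: the set of indices with $d_{R,\cn}>0$ is typically still infinite.

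The paper avoids this by working with dyadic \emph{level sets} of the coefficients, $\Lambda_j=\{(R,\cn):2^{-j}<d_{R,\cn}\le 2^{-j+1}\}$, each of which is finite because $\{d_{R,\cn}\}\in\ell_\tau$. Within a fixed $\Lambda_j$ the coefficients are essentially constant ($\asymp 2^{-j}$), so they factor out and Lemma~\ref{lem:Imax} applies with $q=t/p$ to give $\|\sum_{(R,\cn)\in\Lambda_j}|R|^{t/p-r}\mathbf{1}_{U(R,\cn)}\|_{L_{p/t}}\le C(\#\Lambda_j)^{t/p}$. Summing over $j>k$ with the triangle inequality in $L_{p/t}$ (now a genuine norm since $p/t>1$) then closes the estimate. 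You can graft this onto your rearrangement framework by further decomposing the tail $\{k>m\}$ into dyadic blocks $2^\ell m<k\le 2^{\ell+1}m$, but some finite stratification is unavoidable here; a single pass with H\"older over all of $\Lambda_m^c$ will not work.
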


\begin{myre}
 We mention that the correction factor $r$ in Proposition \ref{prop:jackson} must be incorporated as a consequence of the polynomial structure of the $\alpha$-coverings, where we notice that $r\rightarrow 0$ as $\alpha\rightarrow 1$. In fact, we have no correction in the limit case $\alpha=1$. The case $\alpha=1$ is not treated in the present paper, but it corresponds exactly to a dyadic wavelet setup considered by the author in \cite{MR4635619}.
\end{myre}

\begin{proof}
Let $f\in M^{\gamma,\alpha}_{\tau,\tau}(\va)$. We put $c_{R,\cn}(f):=
\langle f,w_{R,\cn}\rangle |R|^{\gamma/\nu-1/\tau+1/2}$. Then
$$\|f\|_{M^{\gamma,\alpha}_{\tau,\tau}(\va)} \asymp
\|\{c_{R,\cn}(f)\}\|_{\ell_\tau}:=M.$$ Notice also that $|\langle
f,w_{R,\cn}\rangle||R|^{\beta/\nu+1/2} = |c_{R,\cn}(f)||R|^{1/p-r/t}$.
Define for $j\in \bZ$
$$\Lambda_j=\big\{(R,\cn)\in \cK\times\N_0^d\colon 2^{-j}<|c_{R,\cn}(f)|\leq 2^{-j+1}\big\}.$$
Standard estimates show that
$$\#\Lambda_j\leq CM^\tau 2^{j\tau}\quad \Rightarrow\quad \sum_{j\leq
  k}\#\Lambda_j\leq C M^\tau2^{k\tau}.$$
Put $$T_k=\sum_{j\leq k}\sum_{(R,\cn)\in\Lambda_j} \langle
f,w_{R,\cn}\rangle w_{R,\cn},$$
then $T_k\in\Sigma_{\lceil C M^\tau2^{k\tau} \rceil}(\Dict)$.
 Since
$\sigma_m(f,\Dict)_{F^{\beta,\alpha}_{p,t}(\va)}$ is decreasing as a function of $m$, it
suffices to prove (\ref{eq:jack}) for the  subsequence $m_k=\lceil C M^\tau2^{k\tau} \rceil$ as $m_{k+1}/m_k\asymp 1$ for $k\rightarrow \infty$.
Specifically, we will prove that
$$\|f-T_k\|_{F^{\beta,\alpha}_{p,t}(\va)}\leq
C(M^\tau
2^{k\tau})^{-(\frac 1\tau -\frac 1p)}\|f\|_{M^{\gamma,\alpha}_{\tau,\tau}(\va)}
=C(M^{\tau}2^{-k(p-\tau)})^{\frac 1p}.$$
By the norm equivalence of Theorem \ref{prop:modu}, we obtain
\begin{align*}
 \|f-T_k\|_{F^{\beta,\alpha}_{p,t}(\va)}^p &= \int_{\R^d}
\bigg(\sum_{j\geq k+1} \sum_{(R,\cn)\in \Lambda_j} \big(|\langle
f,w_{R,\cn}\rangle||R|^{\beta/\nu+1/2}
\mathbf{1}_{U_{(R,\cn)}}\big)^t\bigg)^{\frac{p}{t}} \dx\\
&= \int_{\R^d} \bigg(\sum_{j\geq k+1} \sum_{(R,\cn)\in \Lambda_j}
\big(|c_{R,\cn}(f)||R|^{1/p-r/t} \mathbf{1}_{U(R,\cn)}\big)^t\bigg)^{\frac{p}{t}}
\dx\\
&\leq C\int_{\R^d} \bigg(\sum_{j\geq k+1} \sum_{(R,\cn)\in \Lambda_j}
\big(2^{-j}|R|^{1/p-r/t} \mathbf{1}_{U(R,\cn)}\big)^t\bigg)^{\frac{p}{t}}
\dx.
\end{align*}
Two distinct cases need to be considered. The first case is $p\leq
t$, where the Jackson inequality is universally true. We have,
\begin{align*}
\|f-T_k\|_{F^{\beta,\alpha}_{p,t}(\va)}^p
&\leq C\sum_{j\geq k+1} \sum_{(n,I)\in \Lambda_j} \int_{\bR^d}
\big(2^{-j}|R|^{1/p-r/t} \mathbf{1}_{U(R,\cn)}\big)^p \dx\\
&= C\sum_{j\geq k+1} 2^{-jp}\int_{\bR^d}\sum_{(R,\cn)\in
\Lambda_j} |R| \,\mathbf{1}_{U(R,\cn)} \dx\\
&\leq C'\sum_{j\geq k+1} 2^{-jp}\# \Lambda_j, \qquad \text{since}\: |U(R,\cn)|\asymp |R|^{-1},\\
&\leq C M^\tau 2^{-k(p-\tau)}.
\end{align*}
Next, we consider $p>t$. We have
\begin{align*}
  \|f-T_k\|_{F^{\beta,\alpha}_{p,t}(\va)}^t
  &\leq C\Big\|\sum_{j\geq k+1} \sum_{(R,\cn)\in
    \Lambda_j} 2^{-jt}|R|^{\frac tp-r}
  \mathbf{1}_{U(R,\cn)}\Big\|_{L_{\frac{p}{t}}}\\
  &\leq C\sum_{j\geq k+1} 2^{-jt} \Big\|\sum_{(R,\cn)\in
    \Lambda_j} |R|^{\frac tp-r}
  \mathbf{1}_{U(R,\cn)}\Big\|_{L_{\frac{p}{t}}}.
\end{align*}
Lemma \ref{lem:Imax} yields
\begin{align*}
      \Big\|\sum_{(R,\cn)\in \Lambda_j} |R|^{\frac tp-r}
  \mathbf{1}_{U(R,\cn)}\Big\|_{L_{\frac{p}{t}}}
  &\leq C\big\| I_{\Lambda_j}(\cdot)^{\frac tp}
  \big\|_{L_{\frac{p}{t}}}\\
&  = C\biggl( \int_{\R^d} I_{\Lambda_j}(\cx)\dx\biggr)^{\frac tp}\\
&\leq \biggl( \int_{\R^d} \sum_{(R,\cn)\in\Lambda_j} |R|\,\mathbf{1}_{U(R,\cn)}(\cx)\dx\biggr)^{\frac tp}\\
&  \leq C(\# \Lambda_j)^{\frac tp},
\end{align*}
and thus
\begin{align*}
  \|f-T_k\|_{F^{\beta,\alpha}_{p,t}(\va)}^t
  &\leq C\sum_{j\geq k+1} 2^{-jt}(\# \Lambda_j)^{\frac tp} \leq
  C'M^{\tau \frac tp} \sum_{j\geq k+1} 2^{-j\frac tp(p-\tau)} \leq C''
  \big( M^\tau 2^{-k(p-\tau)}\big)^{\frac tp}.
\end{align*}
\end{proof}

\begin{myre}
The fact that $\Dict$ is non-redundant is not used at all in the proof of Proposition \ref{prop:jackson}. In fact, Proposition \ref{prop:jackson} will hold for any redundant frame of the same structure as $\Dict$, provided the frame satisfies similar norm characterizations as given in Theorem \ref{prop:modu}, but based on the canonical frame coefficients.
\end{myre}
\subsection{Bernstein inequalities}
We can also establish a Bernstein-type inequality for the
$\alpha$-Trie\-bel-Li\-zor\-kin and $\alpha$-modulation spaces. The estimate relies heavily on the \textit{non-redundancy} of the constructed tensor brushlet bases.
The first result concerns $m$-term brushlet approximation to functions
in the $\alpha$-modulation space  $M^{\gamma,\alpha}_{\tau,\tau}(\va)$,
where the error is measured in $M^{\gamma,\alpha}_{p,t}(\va)$.

\begin{myprop}\label{prop:bernstein1}
Given $0\leq \alpha < 1$, and $0<\tau<\infty$, let $\Dict:=\cW_{(1-\alpha)^{-1}}$ be 
the tensor-brushlet system considered in Proposition \ref{prop:onb}.
Let $\tau<p<\infty$, $0<q<t<\infty$, and $-\infty<\beta<\gamma<\infty$. Suppose
\begin{equation}\label{eq:pqr}
\frac 1\tau -\frac 1p = \frac 1q -\frac 1t = \frac{\gamma
  -\beta}\nu,
\end{equation}
then for every $g\in \Sigma_n(\Dict)$
$$\|g\|_{M^{\gamma,\alpha}_{\tau,q}(\va)} \leq Cn^{ \frac{\gamma
    -\beta}\nu} \|g\|_{M^{\gamma,\alpha}_{\beta,t}(\va)}.$$
\end{myprop}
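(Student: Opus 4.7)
The plan is to transfer the inequality to the sequence-space side using the isomorphism of Theorem \ref{prop:modu}, and then exploit the fact that $\mathcal{W}$ is an \emph{orthonormal basis}. Non-redundancy is the crucial input: since $g\in\Sigma_n(\Dict)$ and $\{w_{R,\cn}\}$ is linearly independent, $g$ has a unique expansion $g=\sum_{(R,\cn)\in\Lambda}c_{R,\cn}w_{R,\cn}$ with $c_{R,\cn}=\langle g,w_{R,\cn}\rangle$ and $\#\Lambda\leq n$. Setting $N_R:=\#\{\cn:c_{R,\cn}\neq 0\}$, we therefore have the combinatorial constraint $\sum_R N_R\leq n$, which would \emph{fail} for a redundant system where only the existence of \emph{some} $n$-sparse representation would be known.

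Next I would invoke the equivalent discrete expression from Eq.\ \eqref{eq:malt},
$$\|g\|_{M^{\gamma,\alpha}_{\tau,q}(\va)}\asymp \bigg(\sum_{R\in\mathcal{K}}|R|^{(\gamma/\nu+1/2-1/\tau)q}\Big(\sum_{\cn\in\bN_0^d}|c_{R,\cn}|^\tau\Big)^{q/\tau}\bigg)^{1/q},$$
and the analogous identity for the target norm $\|g\|_{M^{\beta,\alpha}_{p,t}(\va)}$ (I am reading the right-hand side of the statement as $M^{\beta,\alpha}_{p,t}(\va)$). The first useful observation is that the scaling hypothesis $1/\tau-1/p=(\gamma-\beta)/\nu$ makes the two weights coincide exactly: $\gamma/\nu+1/2-1/\tau=\beta/\nu+1/2-1/p$. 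The inequality then reduces to a weighted mixed-norm comparison of the same coefficient sequence in two different $\ell_q$--$\ell_\tau$ vs.\ $\ell_t$--$\ell_p$ pairings.

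I would then apply Hölder twice on the coefficient side. First, on the inner $\cn$-sum, using the $N_R$-point support and exponent $p/\tau>1$, to get
$$\Big(\sum_{\cn}|c_{R,\cn}|^\tau\Big)^{1/\tau}\leq N_R^{1/\tau-1/p}\Big(\sum_{\cn}|c_{R,\cn}|^p\Big)^{1/p}.$$
Substituting into the outer sum and then applying Hölder in $R$ with exponent $t/q>1$ and conjugate $t/(t-q)$ separates off the combinatorial factor $\big(\sum_R N_R^{\eta}\big)^{(t-q)/t}$ with $\eta:=(1/\tau-1/p)q\cdot t/(t-q)$, leaving the second factor $\|g\|_{M^{\beta,\alpha}_{p,t}(\va)}^q$.

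The only real check is the algebraic alignment of exponents: using both hypotheses $1/\tau-1/p=1/q-1/t=(\gamma-\beta)/\nu$ one verifies that $\eta=1$ exactly. Hence $\sum_R N_R^{\eta}=\sum_R N_R\leq n$, and the combinatorial factor becomes $n^{(t-q)/t}$. Extracting $q$-th roots gives the required exponent $n^{(t-q)/(qt)}=n^{(\gamma-\beta)/\nu}$. I do not anticipate any substantial obstacle beyond this bookkeeping; the argument is essentially a Hölder--counting estimate whose success is forced once the non-redundant structure of $\mathcal{W}$ is invoked to guarantee $\sum_R N_R\leq n$.
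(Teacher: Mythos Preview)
Your proposal is correct and follows essentially the same approach as the paper's proof: pass to the discrete norm via \eqref{eq:malt}, use the scaling relation to match the weights, apply H\"older on the inner $\cn$-sum (exponent $p/\tau$) and then on the outer $R$-sum (exponent $t/q$), and finally use $\sum_R N_R\le n$ together with the exponent identity $\eta=1$ forced by \eqref{eq:pqr}. Your reading of the right-hand side as $M^{\beta,\alpha}_{p,t}(\va)$ is also the intended one.
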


\begin{proof}
Let $g= \sum_{(R,\cn)\in \Lambda} c_{R,\cn}w_{R,\cn}$, with $\#
\Lambda =n$, $w_{R,\cn}\in \Dict$, and define for every $R\in \cK$,
$\Lambda_R := \{ \cn\in
\N_0^d\colon (R,\cn)\in \Lambda\}$. Then since $p>\tau$ and $t>q$ we can use
H\"olders inequality together with the relation \eqref{eq:pqr} to obtain,

\begin{align*}
  \|g\|_{M^{\gamma,\alpha}_{\tau,q}(\va)}^q &\leq C\sum_{R\in \cK}
  \biggl(\sum_{\cn\in \Lambda_R} (|R|^{\frac{\gamma}{\nu} +\frac 12
  -\frac 1{\tau}} |c_{R,\cn}|)^\tau \biggr)^{\frac q\tau}\\
&= C\sum_{R\in \cK}
  \biggl(\sum_{\cn\in \Lambda_R} (|R|^{\frac{\beta}{\nu} +\frac 12
  -\frac 1p} |c_{R,\cn}|)^\tau \biggr)^{\frac q\tau}\\
&\leq C\sum_{R\in \cK} (\# \Lambda_R)^{q(\frac 1\tau-\frac 1p)} \biggl(
  \sum_{\cn\in \Lambda_R} (|R|^{\frac{\beta}{\nu} +\frac 12 -\frac
  1p} |c_{R,\cn}|)^p \biggr)^{\frac qp}\\
&\leq C\biggl( \sum_{R\in \cK} \big(\# \Lambda_R\big)^{q\big(\frac1\tau-\frac1{p}\big)\big(q\big(\frac1{q}-\frac1{t}\big)\big)^{-1}} \biggr)^{1-
  \frac{q}{t}}
  \biggl( \sum_{R\in \cK} \biggl( \sum_{\cn\in \Lambda_R}
  (|R|^{\frac{\beta}{\nu} +\frac 12 -\frac 1p} |c_{R,\cn}|)^p
  \biggr)^{\frac tp} \biggr)^{\frac qt}\\
&= (\# \Lambda)^{q(\frac{1}{q} -\frac{1}{t})} \|g\|_{M^{\beta,\alpha}_{p,t}(\va)}^q \\
&=
  n^{q\frac{\gamma -\beta}\nu} \|g\|_{M^{\beta,\alpha}_{p,t}(\va)}^q.
\end{align*}
\end{proof}

Recall that Eq.\ \eqref{eq:emb} provides  the general embedding $F^{\beta,\alpha}_{p,t}(\va) \hookrightarrow
M^{\beta,\alpha}_{p,\max\{p,t\}}(\va)$, which can be used directly to derive the following result.
\begin{mycor}\label{cor:bernstein}
  Let $0\leq \alpha < 1$,
  $0<p\leq t<\infty$, and $\beta<\gamma$.  Let $\Dict:=\cW_{(1-\alpha)^{-1}}$ be 
the tensor-brushlet system considered in Proposition \ref{prop:onb}. Suppose
$$\frac 1\tau -\frac 1p = \frac 1q -\frac 1t = \frac{\gamma
  -\beta}\nu.$$
Then for every $g\in \Sigma_n(\Dict)$,
$$\|g\|_{M^{\gamma,\alpha}_{\tau,q}(\va)} \leq Cn^{\TS \frac{\gamma
    -\beta}\nu} \|g\|_{F^{\beta,\alpha}_{p,t}(\va)}.$$
\end{mycor}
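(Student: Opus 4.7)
The plan is to derive the stated Bernstein-type estimate by combining Proposition \ref{prop:bernstein1} with the general embedding \eqref{eq:emb}, so essentially no new work is required.

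First, I would check that the hypotheses of Proposition \ref{prop:bernstein1} are indeed in force. The parameter relation $1/\tau-1/p = 1/q-1/t = (\gamma-\beta)/\nu$ coincides with \eqref{eq:pqr}, and since $\beta<\gamma$ we automatically have $\tau<p$ and $q<t$. Thus Proposition \ref{prop:bernstein1} is applicable and yields
\begin{equation*}
\|g\|_{M^{\gamma,\alpha}_{\tau,q}(\va)} \;\le\; C n^{(\gamma-\beta)/\nu}\, \|g\|_{M^{\beta,\alpha}_{p,t}(\va)}, \qquad g\in \Sigma_n(\Dict).
\end{equation*}

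Next, I would invoke the embedding chain in \eqref{eq:emb}, namely $F^{\beta,\alpha}_{p,t}(\va)\hookrightarrow M^{\beta,\alpha}_{p,\max\{p,t\}}(\va)$. Under the current assumption $p\le t$, we have $\max\{p,t\}=t$, so this reduces to the continuous embedding $F^{\beta,\alpha}_{p,t}(\va)\hookrightarrow M^{\beta,\alpha}_{p,t}(\va)$, giving
\begin{equation*}
\|g\|_{M^{\beta,\alpha}_{p,t}(\va)} \;\le\; C'\, \|g\|_{F^{\beta,\alpha}_{p,t}(\va)}.
\end{equation*}
Chaining the two inequalities delivers exactly the claim of the corollary.

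There is essentially no obstacle here: the only thing to verify is that the embedding from \eqref{eq:emb} lines up with the parameters of Proposition \ref{prop:bernstein1}, which uses the condition $p\le t$ in a crucial way (for $t<p$ the max would be $p$ and one would instead land in $M^{\beta,\alpha}_{p,p}(\va)$, which is not directly compatible with the Bernstein inequality as stated). Since the hypothesis $p\le t$ is explicitly built into the corollary, the two results compose cleanly and the proof is a two-line concatenation.
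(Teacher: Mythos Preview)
Your proposal is correct and follows exactly the route the paper indicates: combine Proposition~\ref{prop:bernstein1} with the embedding $F^{\beta,\alpha}_{p,t}(\va)\hookrightarrow M^{\beta,\alpha}_{p,\max\{p,t\}}(\va)$ from \eqref{eq:emb}, noting that $p\le t$ forces $\max\{p,t\}=t$. The paper does not even write out a formal proof for the corollary, merely stating in the sentence preceding it that the embedding ``can be used directly to derive the following result,'' which is precisely your argument.
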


For  general $p,t\in (0,\infty)$ we cannot hope for as good a
Bernstein inequality as in the previous corollary. However, we have
\begin{myprop}\label{prop:bernstein2}
Let $0<\alpha < 1$ and let $\Dict:=\cW_{(1-\alpha)^{-1}}$ be 
the tensor-brushlet system considered in Proposition \ref{prop:onb}.
Suppose $0<p<\infty$, $0< t<\infty$, and
$\beta<\gamma$.  Define $\tau$ by ${\TS \frac 1{\tau}} =
{\TS \frac{\gamma -\beta}\nu} +{\TS \frac 1p}$. Then,
  for every $g\in \Sigma_n(\Dict)$,
$$\|g\|_{M^{\gamma-\nu^2\frac{1-\alpha}{\tau\alpha},\alpha}_{\tau,\tau}(\va)} \leq C
n^{\frac{\gamma-\beta}{\nu}} \|g\|_{F^{\beta,\alpha}_{p,t}(\va)}.$$
\end{myprop}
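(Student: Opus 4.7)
The plan is to reduce the inequality to a weighted sequence-space estimate via Theorem~\ref{prop:modu}, and then dispatch it by H\"older's inequality combined with two geometric facts: the uniform lower bound $|R|\ge c_3>0$ from Lemma~\ref{le:cut}(b), and the counting estimate of Lemma~\ref{lem:Imax}. Writing $g=\sum_{(R,\cn)\in\Lambda} c_{R,\cn}w_{R,\cn}$ with $\#\Lambda\le n$, the retract characterization yields
\begin{equation*}
\|g\|^\tau_{M^{s,\alpha}_{\tau,\tau}(\va)}\asymp \sum_{(R,\cn)\in\Lambda} |R|^{(\beta/\nu+1/2-1/p)\tau-a}|c_{R,\cn}|^\tau,
\end{equation*}
where $s=\gamma-\nu^2(1-\alpha)/(\tau\alpha)$ and $a:=\nu(1-\alpha)/\alpha$; the identity $1/\tau-1/p=(\gamma-\beta)/\nu$ is used to combine the exponents.

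The key move I would make next is H\"older's inequality on $\Lambda$ with the conjugate pair $(p/\tau, p/(p-\tau))$, which separates the extra $|R|^{-a}$ factor from the natural $M^{\beta,\alpha}_{p,p}(\va)$-weight $D_{R,\cn}:=|R|^{\beta/\nu+1/2}|c_{R,\cn}|$. Lemma~\ref{le:cut}(b) then controls the $|R|^{-ap/(p-\tau)}$-sum by a constant times $n$, producing precisely the factor $n^{\tau(\gamma-\beta)/\nu}$, while the remaining factor equals $\|g\|^\tau_{M^{\beta,\alpha}_{p,p}(\va)}$. In the regime $t\le p$ this closes the argument immediately through the embedding \eqref{eq:emb}, which yields $\|g\|_{M^{\beta,\alpha}_{p,p}(\va)}\le C\|g\|_{F^{\beta,\alpha}_{p,t}(\va)}$.

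For $t>p$, however, $M^{\beta,\alpha}_{p,p}(\va)\subsetneq F^{\beta,\alpha}_{p,t}(\va)$, so this last embedding fails and a more delicate argument is needed. Here I would pass to an integral form via $|U(R,\cn)|\asymp|R|^{-1}$, apply pointwise H\"older with the $t$-adapted pair $(t/\tau,t/(t-\tau))$ to introduce a factor comparable to the $F$-norm integrand $(\sum_R A_R(\cx)^t)^{\tau/t}$, where $A_R:=\sum_\cn D_{R,\cn}\mathds{1}_{U(R,\cn)}$, and control the residual sum $\sum_\Lambda |R|^\eta\mathds{1}_{U(R,\cn)}$ by Lemma~\ref{lem:Imax} with $q=\eta+a>0$. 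A second H\"older on the outer integral then reduces matters to bounding $\int_{\bR^d} I_\Lambda(\cx)^\mu\,\dx$ for $\mu=1-ap\tau/((p-\tau)t)<1$, which is $\le Cn$ by Lemma~\ref{le:cut}(b) and $|U|\asymp|R|^{-1}$. The main obstacle I anticipate is this $t>p$ case: the H\"older exponents must be tuned simultaneously so that the positivity hypothesis $q>0$ of Lemma~\ref{lem:Imax} holds and so that the counting estimate delivers exactly the weight $n^{\tau(\gamma-\beta)/\nu}$, and in the borderline parameter regime where these two demands clash a supplementary corridor-level splitting of $\Lambda$ (in the style of the proof of Proposition~\ref{prop:jackson}) should close the remaining gap.
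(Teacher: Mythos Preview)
Your case split $t\le p$ versus $t>p$ is unnecessary, and the second case as you sketch it has a real gap. When you apply pointwise H\"older with exponents $(t/\tau,\,t/(t-\tau))$, the residual exponent in Lemma~\ref{lem:Imax} becomes
\[
q=a+\frac{t}{t-\tau}\Big(\frac{\tau(\gamma-\beta)}{\nu}-a\Big)=\frac{\tau}{t-\tau}\Big(\frac{(\gamma-\beta)t}{\nu}-a\Big),
\]
so $q>0$ requires $t>\nu^2(1-\alpha)/(\alpha(\gamma-\beta))$. For $p<t$ in the intermediate range this fails, and the corridor-splitting fix you allude to is not carried out; there is no indication it would recover the sharp exponent $n^{(\gamma-\beta)/\nu}$.

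The paper avoids the case split entirely by replacing your pointwise H\"older step with the trivial sup bound: for every $(R,\cn)\in\Lambda$ and $\cx\in U(R,\cn)$,
\[
\big(|R|^{\beta/\nu+1/2}|c_{R,\cn}|\big)^\tau\le \big(\mathcal{S}_t^\beta(g)(\cx)\big)^\tau,
\]
valid for \emph{any} $t>0$. This factors the integrand as $(\mathcal{S}_t^\beta(g))^\tau\cdot\sum_\Lambda|R|^{\tau(\gamma-\beta)/\nu-a}\mathds{1}_{U(R,\cn)}$, after which a single H\"older with $(p/\tau,\,p/(p-\tau))$ and Lemma~\ref{lem:Imax} with $q=\tau(\gamma-\beta)/\nu$ (automatically positive since $\gamma>\beta$) finish the proof; the identity $q\cdot p/(p-\tau)=1$ makes $\int I_\Lambda^{qp/(p-\tau)}=\int I_\Lambda\le Cn$. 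Your $t\le p$ argument, by contrast, is correct and even avoids Lemma~\ref{lem:Imax} altogether via the crude bound $|R|\ge c_3$, but it cannot be pushed to $t>p$ because the embedding \eqref{eq:emb} reverses.
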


\begin{proof}
Suppose $g= \sum_{(R,\cn)\in \Lambda} c_{R,\cn}w_{R,\cn}$, with $\#
\Lambda =n$, $w_{R,\cn}\in \Dict$, and let $\mathcal{S}_q^s(g)$ be defined as in Eq.\ \eqref{eq:Sq}.
Then, since $\tau<p$ we have
\begin{align*}
  \|g\|_{M^{\gamma-\nu^2\frac{1-\alpha}{\tau\alpha},\alpha}_{\tau,\tau}(\va)}^\tau &\leq C\sum_{(R,\cn)\in
  \Lambda} (|R|^{\frac{\gamma}{\nu}-\nu\frac{1-\alpha}{\tau\alpha}+1/2-1/\tau} |c_{R,\cn}|)^\tau \\
&= C\int_{\R^d} \sum_{(R,\cn)\in
  \Lambda} (|R|^{\frac
  \beta\nu +\frac12 } |c_{R,\cn}|)^\tau \cdot |R|^{\frac{\tau(\gamma-\beta)}{\nu}-\nu\frac{1-\alpha}\alpha} \mathbf{1}_{U(R,\cn)}(\cx)\dx \\
&\leq C\int_{\R^d} \Bigl(  \mathcal{S}_t^\beta(g)(\cx))^t \Bigr)^{\frac{\tau}t} \cdot \Bigl( \sum_{(R,\cn)\in
  \Lambda} |R|^{\frac{\tau(\gamma-\beta)}{\nu}-\nu\frac{1-\alpha}\alpha}
  \mathbf{1}_{U(R,\cn)}(\cx)\Bigr)\dx\\
&\leq 2C\biggl\| \Bigl((\mathcal{S}_t^\beta(g))^t\Bigr)^{\frac{\tau}t} \biggr\|_{L_{\frac p\tau}}\cdot \biggl\|
  \sum_{(R,\cn)\in \Lambda} |R|^{\frac{\tau(\gamma-\beta)}{\nu}-\nu\frac{1-\alpha}\alpha}
  \mathbf{1}_{U(R,\cn)}(\cdot)\biggr\|_{L_{\frac p{p-\tau}}}.
\end{align*}
By Lemma \ref{lem:Imax} we have 
\begin{align*}
  \int_{\R^d}
  \Bigl( \sum_{(R,\cn)\in \Lambda} |R|^{\frac{\tau(\gamma-\beta)}{\nu}-\nu\frac{1-\alpha}\alpha}
  \mathbf{1}_{U(R,\cn)}(\cx)\Bigr)^{\frac p{p-\tau}} \dx 
&\leq C\int_{\R^d} I_\Lambda(\cx)^{\frac{\tau(\gamma-\beta)}{\nu}\cdot
  \frac{p}{p-\tau}}\dx \\
&= C \int_{\R^d} I_\Lambda(\cx)\dx \\&\leq C\# \Lambda .
\end{align*}
Using Proposition \ref{prop:normchar}, we conclude that
$$\|g\|_{M^{\gamma -\nu^2\frac{1-\alpha}{\tau\alpha},\alpha}_{\tau,\tau}(\va)}^\tau \leq C' (\#
\Lambda)^{\frac{p-\tau}{p}} \bigl\| \mathcal{S}_t^\beta(g)^{\tau} \bigr\|_{L_{\frac p\tau}}
= C'n^{\tau\frac{\gamma -\beta}{\nu}} \|g\|_{F^{\beta,\alpha}_{p,t}(\va)}^\tau .$$
\end{proof}

\subsection{Additional Jackson and Bernstein estimates}

The approximation framework developed by DeVore and Popov \cite{MR942269} 
makes it clear that the main tool in the characterization of
$\mathcal{A}^\gamma_{\, q}(X,\Dict)$ comes from the link between
approximation theory and interpolation theory (see also
\cite[Theorem 7.9.1]{DeVore1993}). Let $Y$ be a (quasi-)Banach
space with semi-(quasi)norm $|\cdot|_{Y}$ continuously embedded in
in the (quasi-)Banach space $X$. Given $r_J>0$,
a Jackson inequality
\begin{equation}
   \label{eq:JacksonIneqGeneral}
   \sigma_m(f,\Dict)_X
    \leq
   C m^{-r_J} |f|_{Y},
   \qquad\forall f \in Y: \forall m\in \N,
\end{equation}
with constant $C$  independent of $f$, $S$ and $m$,
 implies 
the continuous embedding
\begin{equation}
   \label{eq:JacksonEmb}
\left(X,Y\right)_{\sigma/r_J,q}
 \hookrightarrow \mathcal{A}^{\sigma}_{\, q}(X,\Dict),
 \end{equation}
for all $0 <
\sigma < r_J$, $q \in (0,\infty]$, while 
a Bernstein inequality for $r_B>0$
\begin{equation}
  \label{eq:BernsteinIneqGeneral}
   |S|_{Y}
   \leq C' m^{r_B} \|S\|_X,
  \qquad\qquad\forall S \in \Sigma_m(\Dict),
\end{equation}
with constant  $C'$ independent of $f$, $S$ and $m$,
 implies the continuous embedding
\begin{equation}
  \label{eq:BernsteinEmb}
 \mathcal{A}^{\sigma}_{\, q}(X,\Dict)
 \hookrightarrow \left(X,Y\right)_{\sigma/r_B,q}
 \end{equation}
for all $0 <
\sigma < r_B$ and $q \in (0,\infty]$. As is well-known, this leads to a full characterization of 
$\mathcal{A}^{s}_{\, q}(X,\Dict)$ an an interpolation space in the case where the Jackson and Bernstein inequality have matching exponents, i.e., $r_J=r_B$. We refer to  \cite[§7]{DeVore1993} for a more detailed discussion on these general embedding results.

Now, using the Jackson and Bernstein inequalities from
Propositions \ref{prop:jackson} and \ref{prop:bernstein2}, and from
Corollary \ref{cor:bernstein}, we get the following embeddings, which also concludes the paper.

\begin{myprop}\label{prop:int}
  Let $0<\alpha <1$, $0<p<\infty$, $0\leq t<\infty$, and
  $\beta<\gamma$. Let $\Dict:=\cW_{(1-\alpha)^{-1}}$ be 
the tensor-brushlet system considered in Proposition \ref{prop:onb}.Define 
  $\tau$ by $1/\tau -1/p =1/\eta -1/t = (\gamma -\beta)/\nu$.
  For $t\geq p$, we have the Jackson embedding
  \begin{equation}
  \label{eq:aJEmb}
  \left(F^{\beta,\alpha}_{p,t}(\va),M^{\gamma,\alpha}_{\tau,\tau}(\va)\right)_{\frac{ s}{\gamma-\beta},q}
  \hookrightarrow
  \mathcal{A}^{s/\nu}_{\, q}\big(F^{\beta,\alpha}_{p,t}(\va),\Dict\big),\qquad
  s<{\gamma-\beta},
   \end{equation}
  and the Bernstein embedding
    \begin{equation}
  \label{eq:aBEmb}
  \mathcal{A}^{s/\nu}_{\, q}\big(F^{\beta,\alpha}_{p,t}(\va),\Dict\big)
  \hookrightarrow 
  \left(F^{\beta,\alpha}_{p,t}(\va),M^{\gamma,\alpha}_{\tau,\eta}(\va)\right)_{\frac{ s}{\gamma-\beta},q},\qquad
  s<{\gamma-\beta}.
  \end{equation}
  For $t<p$, we have the weaker Jackson embedding:
  \begin{equation}\label{eq:weakjackson}
    \left(F^{\beta,\alpha}_{p,t}(\va),M^{\gamma,\alpha}_{\tau',\tau'}(\va)\right)_{\frac{s}{v\nu},q}
    \hookrightarrow
    \mathcal{A}^{s/\nu}_{\, q}\big(F^{\beta,\alpha}_{p,t}(\va),\Dict\big), 
  \end{equation}
  for $s<v\nu$, where ${\TS \frac{1}{\tau'} = \frac 1\tau
    -\nu\frac{1-\alpha}{t\alpha}}$, provided $v = \frac 1{\tau'} -\frac 1p>0$.
Finally, for $t<p$, we have the Bernstein embedding
  \begin{equation}\label{eq:weakbernstein}
    \mathcal{A}^{s/\nu}_{\, q}\big(F^{\beta,\alpha}_{p,t}(\va),\Dict\big) \hookrightarrow
    \left(F^{\beta,\alpha}_{p,t}(\va),M^{\gamma',\alpha}_{\tau,\tau}(\va)\right)_{\frac{ s}{\gamma-\beta},q}
  \end{equation}
  for $s<\gamma-\beta$, with
  $\gamma' = \gamma -\nu^2(1-\alpha)/{\tau\alpha}$.
\end{myprop}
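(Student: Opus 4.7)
The plan is to apply the DeVore--Popov approximation--interpolation machinery summarized at the beginning of this subsection, namely the implications \eqref{eq:JacksonIneqGeneral}$\Rightarrow$\eqref{eq:JacksonEmb} and \eqref{eq:BernsteinIneqGeneral}$\Rightarrow$\eqref{eq:BernsteinEmb}, to the concrete Jackson and Bernstein inequalities already established in Propositions~\ref{prop:jackson} and \ref{prop:bernstein2} and in Corollary~\ref{cor:bernstein}. With those general embedding theorems taken as given, the proof reduces to identifying, in each of the four claimed embeddings, the exponent $r_J$ or $r_B$ and the smoothness target $Y$ so that the hypotheses of the abstract machinery are met, and then reading off the interpolation parameter $\sigma/r_J$ or $\sigma/r_B$ at smoothness level $\sigma=s/\nu$.

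I would handle the case $t\geq p$ first. Proposition~\ref{prop:jackson} with correction term $r=0$ and hypothesis $1/\tau-1/p=(\gamma-\beta)/\nu$ delivers a Jackson inequality at rate $r_J=(\gamma-\beta)/\nu$ with $Y=M^{\gamma,\alpha}_{\tau,\tau}(\va)$; feeding this into \eqref{eq:JacksonEmb} at $\sigma=s/\nu$ (so that $\sigma/r_J=s/(\gamma-\beta)$) yields \eqref{eq:aJEmb}, valid precisely when $s<\gamma-\beta$. For \eqref{eq:aBEmb}, I would invoke Corollary~\ref{cor:bernstein}, which under the same relation on $\tau$ together with the companion relation $1/\eta-1/t=(\gamma-\beta)/\nu$ produces a Bernstein inequality with $r_B=(\gamma-\beta)/\nu$ and $Y=M^{\gamma,\alpha}_{\tau,\eta}(\va)$; plugging this into \eqref{eq:BernsteinEmb} again gives $\sigma/r_B=s/(\gamma-\beta)$ and the stated embedding.

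For the case $t<p$ the main subtlety is that the Jackson rate is degraded by the correction $r/t=\nu(1-\alpha)/(t\alpha)$ inherent to the polynomial $\alpha$-covering. I would introduce the auxiliary parameter $\tau'$ as in the statement, so that $1/\tau'-1/p=(\gamma-\beta)/\nu-r/t=v$; this is exactly the hypothesis of Proposition~\ref{prop:jackson} with $\tau'$ playing the role of $\tau$ there, producing a Jackson inequality at rate $r_J=v$ with $Y=M^{\gamma,\alpha}_{\tau',\tau'}(\va)$. Applying \eqref{eq:JacksonEmb} at $\sigma=s/\nu<v$ then yields \eqref{eq:weakjackson} since $\sigma/r_J=s/(v\nu)$, and the assumption $v>0$ is exactly what makes the abstract machinery applicable in this regime. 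For the Bernstein side, Proposition~\ref{prop:bernstein2} applies verbatim: its parameter relation $1/\tau=(\gamma-\beta)/\nu+1/p$ coincides with the one assumed here, and the inequality has rate $r_B=(\gamma-\beta)/\nu$ with smoothness target $Y=M^{\gamma',\alpha}_{\tau,\tau}(\va)$ for $\gamma'=\gamma-\nu^2(1-\alpha)/(\tau\alpha)$; plugging this into \eqref{eq:BernsteinEmb} with $\sigma/r_B=s/(\gamma-\beta)$ produces \eqref{eq:weakbernstein}.

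There is no substantive technical obstacle beyond this parameter bookkeeping, so the argument is essentially mechanical once the Jackson and Bernstein inequalities are in hand. The point worth emphasizing is that in the regime $t<p$ the exponents $r_J=v$ and $r_B=(\gamma-\beta)/\nu$ no longer match, and the Jackson target $M^{\gamma,\alpha}_{\tau',\tau'}(\va)$ is genuinely distinct from the Bernstein target $M^{\gamma',\alpha}_{\tau,\tau}(\va)$; consequently the approximation space $\mathcal{A}^{s/\nu}_{\,q}(F^{\beta,\alpha}_{p,t}(\va),\Dict)$ is only sandwiched between two distinct interpolation spaces rather than being fully characterized as a single one, in contrast to the matched case $t\geq p$.
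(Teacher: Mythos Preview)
Your proposal is correct and follows essentially the same approach as the paper: both arguments apply the DeVore--Popov machinery \eqref{eq:JacksonEmb}, \eqref{eq:BernsteinEmb} to the concrete inequalities from Proposition~\ref{prop:jackson}, Corollary~\ref{cor:bernstein}, and Proposition~\ref{prop:bernstein2}, with the same identification of $r_J$, $r_B$, and the target spaces $Y$ in each of the four cases. Your bookkeeping of the parameters $\tau,\tau',\eta,v,\gamma'$ and the substitution $\sigma=s/\nu$ matches the paper's, and your closing remark on why the $t<p$ regime yields only a sandwich rather than a full characterization is an apt addition.
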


\begin{proof}
 We first consider the case where $t\geq p$.  Define 
  $\tau$ by $1/\tau -1/p =1/\eta -1/t = (\gamma -\beta)/\nu$, and
 let $X=F^{\beta,\alpha}_{p,t}(\va)$ and $Y=M^{\gamma,\alpha}_{\tau,\tau}(\va)$. From Proposition \ref{prop:jackson}, we have the Jackson estimate \eqref{eq:JacksonIneqGeneral} 
 with $r_J=\frac{\gamma-\beta}{\nu}$. Put $s=\sigma\nu$ and \eqref{eq:aJEmb} follows directly from \eqref{eq:JacksonEmb}. Next, we put  $Y=M^{\gamma,\alpha}_{\tau,\eta}(\va)$, and notice that Corollary \ref{cor:bernstein} provides a Bernstein inequality of the type \eqref{eq:BernsteinIneqGeneral} with $r_B=\frac{\gamma-\beta}{\nu}$. Hence, we may put $s=\sigma\nu$ and \eqref{eq:aBEmb} follows from \eqref{eq:BernsteinEmb}.

 Next, we consider the remaining case $t<p$. Put $X=F^{\beta,\alpha}_{p,t}(\va)$ and $Y=M^{\gamma,\alpha}_{\tau',\tau'}(\va)$, where we let ${\TS \frac{1}{\tau'} = \frac 1\tau
    -\nu\frac{1-\alpha}{t\alpha}}$ and $v = \frac 1{\tau'} -\frac 1p$. From Proposition \ref{prop:jackson}, we have the Jackson estimate \eqref{eq:JacksonIneqGeneral} 
 with $r_J=\frac{\gamma-\beta}{\nu}-\nu\frac{1-\alpha}{t\alpha}=v$. We now put $\sigma=s\nu$ and \eqref{eq:weakjackson} follows from \eqref{eq:BernsteinEmb} provided $v>0$. Finally, we let $\gamma' = \gamma -\nu^2\frac{1-\alpha}{\tau\alpha}$ and put 
 $Y=M^{\gamma',\alpha}_{\tau,\tau}(\va)$. From Proposition \ref{prop:bernstein2} we obtain the Bernstein estimate \eqref{eq:BernsteinIneqGeneral} with $r_B=\frac{\gamma-\beta}{\nu}$. Hence, \eqref{eq:weakbernstein} follows directly from the general embedding in \eqref{eq:BernsteinEmb} with $s=\sigma\nu$.
\end{proof}

\appendix
\section{Univariate brushlets} \label{app:brush}\label{s:app}
For the benefit of the reader, we will review various known properties of brushlet  and tensor product brushlet systems in this appendix. 
A univariate brushlet basis is associated with a partition of the
frequency axis $\bR$. The partition can  be chosen with almost
no restrictions, but in order to have good properties of the
associated basis we need to impose some growth conditions on the
partition. 
We have the following definition.

\begin{mydef}
A family $\ptt$ of intervals is called a {\it disjoint covering} of $\R$ if
it consists of a countable set of pairwise disjoint half-open intervals
$I=[\alpha_I,\alpha'_I)$, $\alpha_I<\alpha'_I$, such
that $\cup_{I\in \ptt}I=\R$. If, furthermore, each interval in $\ptt$
has a unique adjacent interval in $\ptt$ to the left and to the right,
and there exists a constant $A>1$ such that
\begin{equation}\label{eq:growthcond}
A^{-1}\leq \frac{|I|}{|I'|} \leq A,\qquad \text{for
  all adjacent}\; I,I'\in \ptt,
\end{equation}
we call $\ptt$ a {\it moderate disjoint covering} of $\R$.
\end{mydef}

Given a moderate disjoint covering $\ptt$ of $\R$, assign to each interval $I\in \ptt$ a cutoff
radius $\varepsilon_I>0$ at the left endpoint and a cutoff radius
$\varepsilon'_I>0$ at the right endpoint, satisfying
\begin{equation}\label{eq:epsilon}
\begin{cases}
\text{(i)}& \ve'_I= \ve_{I'}\; \text{whenever}\;
\alpha'_I=\alpha_{I'}\\
\text{(ii)}& \ve_I+\ve'_I\leq |I|\\
\text{(iii)}& \ve_I\geq c |I|,
\end{cases}
\end{equation}
with $c>0$ independent of $I$.

\begin{myre}
As will be clear in the definition of the brushlet system below, it is not essential that 
exactly half-open intervals are used in the definition of $\ptt$ as the only  parameters used from $I=[\alpha_I,\alpha'_I)\in\ptt$ are the knots  $\alpha_I$ and $\alpha'_I$. Hence, a mix of half-open  and closed intervals may also be used if we drop the requirement of a perfect partition of $\bR$ and allow suitable single point intersections between  pairs of sets from $\ptt$.
\end{myre}

We are now ready to define the brushlet system. For each $I\in \ptt$,
we 
will  construct a smooth bell function localized in a
neighborhood of this interval. Take a
non-negative ramp function $\rho \in C^\infty(\R)$ satisfying
\begin{equation}\label{eq:ramp}
\rho(\xi)=\left\{ \begin{array}{ll}
0&\mbox{for}\; \xi \leq -1,\\
1&\mbox{for}\; \xi \geq 1,
\end{array} \right.
\end{equation}
with the property that
\begin{equation}\label{eq:ramp2}
\rho (\xi)^2+\rho(-\xi)^2=1\qquad \text{for all}\; \xi \in\R.
\end{equation}
Define for each $I = [\alpha_I,\alpha'_I)\in \ptt$ the {\it bell function}
\begin{equation}\label{eq:bell}
b_I(\xi) := \rho \bigg( \frac{\xi-\alpha_I}{\ve_I}\bigg)\rho
\bigg( \frac{\alpha'_I-\xi}{\ve'_I}\bigg) .
\end{equation}
Notice that $\supp (b_I) \subset [\alpha_I-\ve_I,\alpha'_I+\ve'_I]$ and
$b_I(\xi)=1$ for $\xi \in [\alpha_I+\ve_I,\alpha'_I-\ve'_I]$.
Now the set of local cosine functions
\begin{equation}\label{eq:brush1}
\widehat{w_{I,n}}(\xi) =\sqrt{\frac{2}{|I|}} b_I(\xi)
\cos\biggl( \pi \bigl( n+{\textstyle \frac12}\bigr)
\frac{\xi- \alpha_I}{|I|} \biggr) ,\quad n\in \N_0,\quad I\in \ptt,
\end{equation}
 with $\bN_0:=\bN\cup \{0\}$, constitute an orthonormal basis for $L_2(\bR)$, see, e.g.,
\cite{Auscher1992}. We 
call the collection $\{ w_{I,n}\colon I\in\ptt, n\in\N_0\}$ a (univariate)
{\it brushlet system}. The brushlets also have an explicit
representation in the time domain. Define the set of {\it central
  bell functions} $\{ g_I\}_{I\in \ptt}$ by
\begin{equation}\label{eq:gb}
\widehat{g_I}(\xi) := \rho \biggl(
\frac{|I|}{\ve_I} \xi\biggr) \rho \biggl( \frac{|I|}{\ve'_I} (1-\xi)\biggr),
\end{equation}
where one verifies that 
\begin{equation}\label{eq:bi}
b_I(\xi) = \hat{g}_I\bigl(|I|^{-1}(\xi -\alpha_I)\bigr). 
\end{equation}
Let for notational convenience
$$e_{I,n}:= \frac{\pi \bigl( n+{\textstyle
    \frac12}\bigr)}{|I|},\qquad
I\in \ptt,\; n\in \N_0.$$
Then,
\begin{equation}
w_{I,n}(x)
= \sqrt{\frac{|I|}{2}} e^{i\alpha_Ix} \bigl\{ g_I\bigl(
  |I|(x +e_{n, I}) \bigr) + g_I\bigl( |I|(x -e_{n,I} ) \bigr)
  \bigr\}.\label{eq:gw}
\end{equation}

By a straightforward calculation, it can be verified (see
\cite{Borup2003}) that for $r\geq 1$ there exists a constant $C:=C(r)<\infty$, independent
of $I\in \ptt$, such that  
\begin{equation}\label{eq:gdecay}
|g_I(x)|\leq C(1+|x|)^{-r}.
\end{equation}  Thus a
brushlet $w_{n,I}$ essentially consists of two well-localized ``humps'' at
the points $\pm e_{n,I}$.

Given a bell function $b_I$, define an
operator $\mathcal{P}_I:L_2(\bR) \rightarrow L_2(\bR)$ by
\begin{equation}
  \label{eq:proje}
  \widehat{\mathcal{P}_If}(\xi) := b_I(\xi)\bigl[ b_I(\xi)\hat{f}(\xi) +
b_I(2\alpha_I-\xi)\hat{f}(2\alpha_I-\xi)- b_I(2\alpha'_I-\xi)\hat{f}(2\alpha'_I-\xi)\bigr].
\end{equation}
It can be verified that $\mathcal{P}_I$ is an orthogonal projection, mapping
$L_2(\bR)$ onto $$\overline{\Span}\{w_{n,I}\colon n\in \N_0\}.$$  In
Section \ref{s:onb}, some of the finer properties of the
operator given by \eqref{eq:proje} are needed. Let us list properties here, and
refer the reader to \cite[Chap.\ 1]{Hernandez1996} for a more detailed
discussion of the properties of local trigonometric bases.

 Suppose $I = [\alpha_I,\alpha'_I)$
and $J=[\alpha_J,\alpha_J')$ are two adjacent compatible intervals
(i.e., $\alpha'_I=\alpha_J$ and $\epsilon_I'=\epsilon_J$). Then it holds true that
\begin{equation}
  \label{eq:summm}
  \widehat{\mathcal{P}_If}(\xi)+\widehat{\mathcal{P}_Jf}(\xi)=\hat{f}(\xi),\qquad
\xi\in[\alpha_I+\epsilon_I,\alpha_J'-\epsilon_J'],\qquad f\in L_2(\bR).
\end{equation}
We can verify \eqref{eq:summm} using the fact that
$b_I\equiv 1$ on $[\alpha_I+\epsilon_I,\alpha'_I-\epsilon_I']$ and that
$b_J\equiv 1$ on $[\alpha_J+\epsilon_J,\alpha_J'-\epsilon_J']$,
together with the fact that 
$$\text{supp}\big(b_I(\cdot)b_I(2\alpha_I-\cdot)\big)\subseteq
[\alpha_I-\epsilon_I,\alpha_I+\epsilon_I]$$
and
$$\text{supp}\big(b_I(\cdot)b_I(2\alpha_I'-\cdot)\big)\subseteq
[\alpha_I'-\epsilon_I',\alpha_I'+\epsilon_I'].
$$
 For 
$\xi\in[\alpha_I'-\epsilon_I',\alpha_J+\epsilon_J]$ we notice that 
\begin{align}
\widehat{\mathcal{P}_If}(\xi)+&\widehat{\mathcal{P}_Jf}(\xi)=
[b_I^2(\xi)+b_J^2(\xi)(\xi)]\hat{f}(\xi)\notag \\&+
\label{eq:add} b_J(\xi)b_J(2\alpha'-\xi)\hat{f}(2\alpha'-\xi)-
b_I(\xi)b_I(2\alpha'-\xi)\hat{f}(2\alpha'-\xi).  
\end{align}
We can then conclude that \eqref{eq:summm} holds true  using the
following facts (see \cite[Chap.\ 1]{Hernandez1996})
$$b_I(\xi)=b_J(2\alpha_I'-\xi),\qquad b_J(\xi)=b_I(2\alpha_J'-\xi),\qquad
\text{for }\xi\in [\alpha_I'-\epsilon_I',\alpha_J+\epsilon_J],$$
and
$$b_I^2(\xi)+b_J^2(\xi)=1,\qquad \text{for } \xi\in [\alpha_I+\epsilon_I,\alpha_J'-\epsilon_J'].$$
Moreover, 
\begin{equation}\label{eq:proj}\mathcal{P}_I+\mathcal{P}_J=\mathcal{P}_{I\cup J},\qquad
\mathcal{P}_I\mathcal{P}_J=\mathcal{P}_J\mathcal{P}_I=0,\end{equation} with the $\epsilon$-values 
$\epsilon_I$ and $\epsilon_J'$ for $I\cup J$.

\begin{myre}\label{re:proj}
In the special case, $I=[-a,a)$ and $J=[-b,b)$, with $0<a<b$ and cutoff values 
$\epsilon_I=\epsilon'_I<(b-a)/2$ and $\epsilon_J=\epsilon'_J<(b-a)/2$, we may appy \eqref{eq:proj} twice to obtain
$$J=[-b,-a)\cup [-a,a)\cup [a,b)\Longrightarrow \mathcal{P}_J=
 \mathcal{P}_{[-b,-a)}+\mathcal{P}_I+ \mathcal{P}_{[a,b)}.
$$
We call on \eqref{eq:proj} again to conclude that
\begin{equation}\label{eq:nested1}
\mathcal{P}_I\mathcal{P}_J=\mathcal{P}_I(\mathcal{P}_{[-b,-a)}+\mathcal{P}_I+ \mathcal{P}_{[a,b)})=0+\mathcal{P}_I^2+0=\mathcal{P}_I,
\end{equation}
and 
\begin{equation}\label{eq:nested2}
\mathcal{P}_J\mathcal{P}_I=(\mathcal{P}_{[-b,-a)}+\mathcal{P}_I+ \mathcal{P}_{[a,b)})\mathcal{P}_I=0+\mathcal{P}_I^2+0=\mathcal{P}_I.
\end{equation}
\end{myre}
A convenient way to construct multivariate brushlet systems on $\bR^d$, $d\geq 2$, is to use a tensor product approach. For a rectangle $R=I_1\times I_2\times\cdots \times I_d\subset \bR^d$, $d\geq 2$, with 
$I_i = [\alpha_{I_i},\alpha'_{I_i})$, $i=1,\ldots,d$. We define
 $$P_R=\bigotimes_{j=1}^d \mathcal{P}_{I_i}.$$ Clearly, $P_R$
 is a projection operator $$P_R\colon L_2(\bR^d) \rightarrow
 \overline{\operatorname{span}}\bigg\{\bigotimes_{j=1}^d w_{i_j,I_j}\colon i_j\in \bN_0\bigg\}.$$
Notice that, 
\begin{equation}
  \label{eq:operatorP}
  P_R = b_R(D)\Bigl[\bigotimes_{j=1}^d
(\operatorname{Id}+R_{\alpha_{I_j}}-R_{\alpha_{I_j}'})
\Bigr]b_R(D),
\end{equation}
where 
\begin{equation}\label{eq:S}
\widehat{b_R(D)f} := b_R\hat{f},
\end{equation}
with  $b_R:=\bigotimes_{j=1}^d b_{I_j}$, and $R_af(x) := e^{i2a}f(-x)$,  
$c,a\in \bR$. The corresponding orthonormal tensor product basis of brushlets is given by
$$w_{R,\cn}:=\bigotimes_{j=1}^d w_{n_j,I_j},\qquad \cn=(n_1,\ldots, n_d)^T\in \bN_0^d.$$
Let $\delta_R=\text{diag}(|I_i|,|I_2|,\ldots,|I_d|)$. Repeated use of \eqref{eq:gw} yields
\begin{align}
|w_{R,\cn}(\cx)|&\leq  2^{-d/2}|R|^{1/2}\prod_{i=1}^d  \left(\big|g_{I_i}\bigl(
  |I_i|(x_i +e_{I_i,n_i}) \bigr)\big|+ \big|g_{I_i}\bigl( |I_i|(x_i -e_{I_{i},n_i})\bigr)\big| 
  \right)\nonumber\\
  &\leq 2^{-d/2}|R|^{1/2}\sum_{m=1}^{2^d} |G_R(\delta_R(\cx-O_m \ce_{R,\cn}))|,\label{eq:G}
  \end{align}
where 
\begin{equation}\label{eq:Gf}
G_R:=g_{I_1}\otimes\cdots\otimes g_{I_d}
\end{equation}
with $\ce_{R,\cn}:=[e_{I_1,n_1},\ldots,e_{I_d,n_d}]^T$, and $O_m:=\text{diag}(\cv_m)$, with $\cv_m\in\R^d$ chosen such that $\cup_{m=1}^{2^d}\cv_m= \{-1,1\}^d$. We also notice, using \eqref{eq:gdecay}, that for any $r>0$,
$$|G_R(\cx)|\leq C_r(1+|\cx|_{\va})^{-r},\qquad \cx\in\R^d,$$
with $C_r$ a finite constant independent of $R$. Finally, we notice by \eqref{eq:bi} that
\begin{equation}\label{eq:GG}
    b_R(\cdot)=\widehat{G_R}\big(\delta_R^{-1}(\cdot-\bm{\alpha}_R)\big),
\end{equation}
with $\bm{\alpha}_R:=(\alpha_{I_1},\ldots,\alpha_{I_d})^\top$.

\section{Some Estimates for Maximal Functions}

For $f\in L_{1,\text{loc}}(\bR^d)$, the (uncentered) maximal function $\cM$ is defined by
\begin{equation}\label{MK}
\cM f(\cx)=\sup_{B\in\mathcal{B}, \cx\in B} \dfrac{1}{|B|} \int_B |f(\cy)| \dy,\;\;\cx\in\bR^d,
\end{equation}
with $\mathcal{B}=\{B_{\vec{a}}(\cz,r):\cz\in\bR^d, r>0\}$, the one-parameter family of anisotropic balls. For $\theta>0$ and $|f|^{\theta}\in L_{1,\text{loc}}(\bR^d)$, we define $\cM_\theta f:=\big(\cM |f|^\theta\big)^{1/\theta}$.
We shall need the Fefferman-Stein vector-valued maximal inequality, see \cite{MR2401510}:
If ${p}\in(0,\infty)^d,\ 0<q\leq \infty$, and $0<\theta<\min(p,q)$ then
 for any sequence $\{f_j\}_j$ with $|f_j|^\theta\in L_{1,\text{loc}}(\bR^d)$, we have
\begin{equation}
  \label{eq:fs}
  \|\{(\cM_\theta(f_j)\}\|_{L_{{p}}(\ell_q)}\leq C_B\|\{f_j\}\|_{L_{{p}}(\ell_q)},
\end{equation}
where $C_B:=C_B(\theta,\vec{p},q)$ and the $L_p(\ell_q)$-norm as defined in Eq.\ \eqref{eq:lplq}.

 Let $u(x)$ be a continuous function on $\bR^d$. We define, for $a,R>0$,
$$u^*(a,R;\cx):=\sup_{\cy\in\bR^d}
\frac{|u(\cy)|}{\brac{R^{\vec{a}}(\cx-\cy)}^a},\qquad \cx\in\bR^d.$$
Using a slight variation on standard techniques, see e.g.\ \cite[Proposition 3.3]{Borup2008}, we obtain the following variation on Peetre’s maximal estimate: For every $\theta>0,$ and $c_s>0$,  there exists a constant $c=c(\theta,c_s)>0,$ such that for every $t>0$, $c_f\in\bR^d$, and $f$ with $\supp(\hat{f})\subset t^{\vec{a}}(c_s[-1,1]^d)+c_f,$
\begin{equation}\label{M3}
f^*(\nu/\theta,t;\cx)=\sup_{\cy\in\mathbb{R}^d} \dfrac{|f(\cy)|}{\brac{t^{\vec{a}}(\cx-\cy)}^{\nu/\theta}}\leq c\cM_\theta f(\cx),\;\cx\in\mathbb{R}^d.
\end{equation}   
Theorem \ref{th:vecmult} below provides a vector-valued multiplier result, which combines Peetre's maximal estimate with the Fefferman-Stein vector-valued inequality, and it is used in the proof of Proposition \ref{prop:normchar}.
For $\Omega=\{\Omega_n\}$ a sequence of compact
subsets of $\bR^d$, we let
$$L_{p}^\Omega(\ell_q):=\{\{f_n\}_{n\in\bN}\in
L_{p}(\ell_q)\,|\,\supp(\hat{f}_n)\subseteq\Omega_n,\,\forall n\}.$$ 

 For $s\in (0,\infty)$, let
\begin{equation}\label{eq:sob}
\|f\|_{H_2^s}:=\biggl( \int |\mathcal{F}^{-1}f(x)|^2 \brac{x}^{2s}
dx\biggr)^{1/2}
\end{equation}
denote the Sobolev  norm. The proof of Theorem \ref{th:vecmult}  can be found in \cite[Theorem 3.5]{Borup2008}. 

\begin{mytheorem}\label{th:vecmult}
  Suppose $0<p<\infty$ and
  $0<q\leq \infty$, and let $\Omega=\{T_k \mathcal{C}\}_{k\in\bN}$ be
  a sequence of compact subsets of $\bR^d$ generated by a family
  $\{T_k=t_k^{\vec{a}} \cdot+\xi_k\}_{k\in\bN}$ of invertible affine
  transformations on $\bR^d$, with $\mathcal{C}$ a fixed compact subset of
  $\bR^d$. Assume $\{\psi_j\}_{j\in \bN}$ is a
  sequence of functions satisfying $\psi_j\in H^s_2$ for some $s>\frac
  \nu2+\frac{\nu}{\min(p,q)}$. Then there exists a constant $C<\infty$
  such that 
$$\|\{\psi_k(D)f_k\}\|_{L_{\vec{p}}(\ell_q)} \leq
C\sup_j\|\psi_j(T_j\cdot)\|_{H^s_2}\cdot
\|\{f_k\}\|_{L_{\vec{p}}(\ell_q)}$$
for all $\{f_k\}_{k\in \bN} \in L_{\vec{p}}^\Omega(\ell_q)$.
\end{mytheorem}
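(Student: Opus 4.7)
The plan is to reduce the statement to a pointwise bound $|\psi_k(D)f_k(\cx)|\le C\,\|\psi_k(T_k\cdot)\|_{H^s_2}\,\mathcal{M}_\theta f_k(\cx)$ for a suitable $\theta<\min(p,q)$, after which the Fefferman--Stein inequality \eqref{eq:fs} closes the argument. First I would write $\psi_k(D)f_k(\cx)=\int K_k(\cy)f_k(\cx-\cy)\,\dy$ with $K_k:=\mathcal{F}^{-1}\psi_k$; since $\supp(\hat f_k)\subseteq T_k\mathcal{C}$, we may as well assume (by multiplying $\psi_k$ by a smooth cutoff supported in a slightly larger dilate of $T_k\mathcal{C}$ and equal to $1$ on $T_k\mathcal{C}$) that $\psi_k$ is compactly supported, so that $K_k$ is continuous and decays well enough for all manipulations.

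Next I would apply Peetre's maximal estimate \eqref{M3} to the band-limited function $f_k$: with the substitution $\cz=\cx-\cy$, the hypothesis $\supp(\hat f_k)\subseteq T_k\mathcal{C}=t_k^{\vec a}\mathcal{C}+\xi_k$ gives
\[
|f_k(\cx-\cy)|\le c\,\brac{t_k^{\vec a}\cy}^{\nu/\theta}\mathcal{M}_\theta f_k(\cx),\qquad \cx,\cy\in\bR^d,
\]
for every $\theta>0$, and the constant $c$ is independent of $k$. Consequently
\[
|\psi_k(D)f_k(\cx)|\le c\,\mathcal{M}_\theta f_k(\cx)\int_{\bR^d}|K_k(\cy)|\brac{t_k^{\vec a}\cy}^{\nu/\theta}\,\dy.
\]
The weighted $L_1$-integral of $K_k$ is the technical heart of the argument and is the step I expect to be the main obstacle. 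I would handle it by Cauchy--Schwarz, splitting $\brac{t_k^{\vec a}\cy}^{\nu/\theta}=\brac{t_k^{\vec a}\cy}^{s}\cdot\brac{t_k^{\vec a}\cy}^{\nu/\theta-s}$:
\[
\int|K_k(\cy)|\brac{t_k^{\vec a}\cy}^{\nu/\theta}\,\dy\le \bigg(\int|K_k(\cy)|^2\brac{t_k^{\vec a}\cy}^{2s}\,\dy\bigg)^{1/2}\bigg(\int\brac{t_k^{\vec a}\cy}^{2(\nu/\theta-s)}\,\dy\bigg)^{1/2}.
\]
The second factor converges by \eqref{brac_int} precisely when $2(s-\nu/\theta)>\nu$, i.e.\ $s>\nu/2+\nu/\theta$, and the anisotropic change of variable $\cu=t_k^{\vec a}\cy$ produces a factor $t_k^{-\nu/2}$. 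For the first factor I would perform the same dilation together with the identity
\[
\mathcal{F}^{-1}[\psi_k(T_k\cdot)](\cu)=t_k^{-\nu}e^{-it_k^{-\vec a}\xi_k\cdot\cu}K_k(t_k^{-\vec a}\cu),
\]
which, after inserting into the Sobolev-type integral, gives $\int|K_k(\cy)|^2\brac{t_k^{\vec a}\cy}^{2s}\,\dy=t_k^{\nu}\|\psi_k(T_k\cdot)\|_{H^s_2}^{2}$. The two powers of $t_k$ cancel exactly, yielding the key $k$-uniform bound
\[
\int|K_k(\cy)|\brac{t_k^{\vec a}\cy}^{\nu/\theta}\,\dy\le C\,\|\psi_k(T_k\cdot)\|_{H^s_2}.
\]

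Combining the previous two displays, one obtains the pointwise estimate $|\psi_k(D)f_k(\cx)|\le C\,\|\psi_k(T_k\cdot)\|_{H^s_2}\,\mathcal{M}_\theta f_k(\cx)$. The hypothesis $s>\nu/2+\nu/\min(p,q)$ allows us to pick $\theta\in(\nu/(s-\nu/2),\min(p,q))$, which simultaneously makes the integrability condition $s>\nu/2+\nu/\theta$ hold and fulfils the threshold $\theta<\min(p,q)$ required by the Fefferman--Stein vector-valued inequality \eqref{eq:fs}. Taking $\sup_k$ on the scalar factor and then the $L_{\vec p}(\ell_q)$-norm on both sides yields
\[
\|\{\psi_k(D)f_k\}\|_{L_{\vec p}(\ell_q)}\le C\sup_j\|\psi_j(T_j\cdot)\|_{H^s_2}\,\|\{\mathcal{M}_\theta f_k\}\|_{L_{\vec p}(\ell_q)}\le C'\sup_j\|\psi_j(T_j\cdot)\|_{H^s_2}\,\|\{f_k\}\|_{L_{\vec p}(\ell_q)},
\]
which is the stated inequality.
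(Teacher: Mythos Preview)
The paper does not give its own proof of this theorem; it simply cites \cite[Theorem~3.5]{Borup2008}. Your argument is correct and follows the standard route to such multiplier estimates: Peetre's pointwise maximal inequality \eqref{M3} for the band-limited $f_k$, a Cauchy--Schwarz bound on the weighted $L_1$-norm of the convolution kernel, and then the vector-valued Fefferman--Stein inequality \eqref{eq:fs}. The key scaling computation---that the change of variable $\cu=t_k^{\vec a}\cy$ produces factors $t_k^{\nu/2}$ and $t_k^{-\nu/2}$ from the two Cauchy--Schwarz halves, which cancel to leave only $\|\psi_k(T_k\cdot)\|_{H^s_2}$---is carried out correctly, and this is precisely what gives the $k$-uniform pointwise bound.

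One minor comment: the initial cutoff step (``we may as well assume $\psi_k$ is compactly supported'') is unnecessary and, if taken literally, would require you to check that multiplication by the rescaled cutoff is bounded on $H^s_2$ uniformly in $k$. In fact your Cauchy--Schwarz bound already shows directly that $\int|K_k(\cy)|\brac{t_k^{\vec a}\cy}^{\nu/\theta}\,\dy\le C\|\psi_k(T_k\cdot)\|_{H^s_2}<\infty$ for any $\psi_k\in H^s_2$, which is all that is needed to justify absolute convergence of the convolution integral and the pointwise manipulations.
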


Finally, we have the following Lemma, which is used in the proof of Proposition \ref{lem:reconstruct}.
\begin{mylemma}\label{lem:maxbound}
  Let $0<r\leq 1$. Using the same notation as in Proposition \ref{lem:reconstruct}, there exists a constant $C$ such that for any sequence $\{s_{R,\cn}\}_{R,\cn}\subset \C$ we have the following estimate
  $$\sum_{\cn\in\bN_0^d} |s_{R,\cn}||w_{R,\cn}|(\cx)\leq C2^{-d/2}|R|^{1/2} \sum_{m=1}^{2^d} M_r\Bigl(\sum_{\cn\in\bN_0^d}
  |s_{R,\cn}| \mathds{1}_{U(R,\cn)}\Bigr)(O_m \cx),\qquad \cx\in\bR^d.$$
\end{mylemma}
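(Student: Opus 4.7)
The plan is to start from the pointwise representation \eqref{eq:G}, $|w_{R,\cn}(\cx)|\leq 2^{-d/2}|R|^{1/2}\sum_{m=1}^{2^d}|G_R(\delta_R(\cx-O_m\ce_{R,\cn}))|$, and then exploit the rapid decay $|G_R(\cz)|\leq C_N\brac{\cz}^{-N}$ for arbitrarily large $N$, coming from the pointwise bound \eqref{eq:gdecay} applied in each coordinate. After the substitution $\cy=O_m\cx$ (legitimate since $O_m^2=\mathrm{Id}$, $\delta_R$ commutes with $O_m$, and $|\cdot|_{\vec{a}}$ is invariant under sign changes, so $\brac{O_m\cz}=\brac{\cz}$), everything reduces, for each fixed $m$, to establishing
\[
    \sum_{\cn\in\bN_0^d} |s_{R,\cn}|\brac{\delta_R(\cy-\ce_{R,\cn})}^{-N}\leq C\,\cM_r(F)(\cy),\qquad F:=\sum_{\cn}|s_{R,\cn}|\mathds{1}_{U(R,\cn)},
\]
and then summing the resulting estimates over the $2^d$ choices of $m$.

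The central step is to replace the point evaluation at $\ce_{R,\cn}$ by a local average on $U(R,\cn)$. By definition \eqref{eq:QD}, $\cw\in U(R,\cn)$ means $\delta_R(\cw-\ce_{R,\cn})\in B_{\vec{a}}(0,1)$, so the quasi-triangle estimate \eqref{brac_esti} yields $\brac{\delta_R(\cy-\ce_{R,\cn})}\leq 2c_1\brac{\delta_R(\cy-\cw)}$ uniformly for $\cw\in U(R,\cn)$. Raising to the $(-rN)$-th power, averaging over $U(R,\cn)$, and using $|U(R,\cn)|^{-1}\asymp|R|$ from Lemma \ref{le:cut}(b), I obtain
\[
    |s_{R,\cn}|^r\brac{\delta_R(\cy-\ce_{R,\cn})}^{-rN}\leq C|R|\int_{U(R,\cn)}|s_{R,\cn}|^r\brac{\delta_R(\cy-\cw)}^{-rN}\,d\cw.
\]
Summing over $\cn$ and invoking the uniformly bounded overlap \eqref{eq:L1} combined with the concavity of $t\mapsto t^r$ for $0<r\leq 1$ (which gives $\sum_{\cn}|s_{R,\cn}|^r\mathds{1}_{U(R,\cn)}\leq L^{1-r}F^r$) leads to
\[
    \sum_{\cn}|s_{R,\cn}|^r\brac{\delta_R(\cy-\ce_{R,\cn})}^{-rN}\leq C|R|\int_{\R^d}F(\cw)^r\brac{\delta_R(\cy-\cw)}^{-rN}\,d\cw.
\]

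To bound the right-hand side by $\cM(F^r)(\cy)=\cM_r(F)(\cy)^r$, I split the integral into anisotropic dyadic annuli centered at $\cy$. For $R\in\cK_j$ Lemma \ref{le:cut}(a,b) gives $|R|\asymp j^{\nu(\beta-1)}$ and $|\delta_R\cz|_{\vec{a}}\asymp j^{\beta-1}|\cz|_{\vec{a}}$, so the anisotropic ball of $\vec{a}$-radius $2^{k}j^{-(\beta-1)}$ around $\cy$ has Lebesgue volume $\asymp 2^{k\nu}|R|^{-1}$. Choosing $N$ large enough that $rN>\nu$ makes the resulting geometric series convergent and produces $|R|\int F^r\brac{\delta_R(\cy-\cw)}^{-rN}d\cw\lesssim \cM(F^r)(\cy)$. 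Finally, the $\ell^r\hookrightarrow\ell^1$ embedding for $0<r\leq 1$, namely $\sum_\cn a_\cn\leq\bigl(\sum_\cn a_\cn^r\bigr)^{1/r}$ for nonnegative sequences, applied with $a_\cn=|s_{R,\cn}|\brac{\delta_R(\cy-\ce_{R,\cn})}^{-N}$, converts the $r$-power estimate into the claimed linear bound.

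The main technical obstacle is the dyadic-annulus step: keeping precise track of the anisotropic scaling so that the factor $|R|$ inherited from $|U(R,\cn)|^{-1}$ cancels exactly against the anisotropic volume $\asymp 2^{k\nu}|R|^{-1}$ of the $k$-th annulus. This amounts to a careful bookkeeping of how $\delta_R$ dilates the $\vec{a}$-norm by a factor $\asymp j^{\beta-1}$, and of the constraint $rN>\nu$ needed to sum the dyadic series — everything else is standard once the point-to-average passage has been executed.
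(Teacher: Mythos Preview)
Your proposal is correct and follows essentially the same approach as the paper: both arguments combine the $\ell^r\hookrightarrow\ell^1$ embedding, a point-to-average replacement of $\ce_{R,\cn}$ by an integral over $U(R,\cn)$, and a dyadic decomposition governed by the condition $rN>\nu$. The only cosmetic difference is the order of operations---the paper decomposes the index set $\{\cn\}$ into dyadic annuli $A_\ell=\{\cn:|\delta_R\cz-\pi(\cn+\ca)|_{\vec a}\sim 2^\ell\}$ and then averages within each shell, whereas you average first (producing the convolution-type integral $|R|\int F^r\brac{\delta_R(\cy-\cw)}^{-rN}\,d\cw$) and then decompose that integral into dyadic annuli in physical space; the resulting bookkeeping with the factor $|R|$ against the annulus volume $\asymp 2^{k\nu}|R|^{-1}$ is identical.
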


\begin{proof}
 From Eq.\ \eqref{eq:G}, we have the estimate, for any $N\in\bN$, 
  \begin{align}
      |w_{R,\cn}(\cx)|&\leq C_N2^{-d/2}|R|^{1/2}\sum_{m=1}^{2^d}\big(1+\big|
\delta_R(\cx-O_m\ce_{R,\cn})\big|_{\va}\big)^{-N}\nonumber\\       
      & =
  C_N2^{-d/2}|R|^{1/2}\sum_{m=1}^{2^d}\big(1+\big|
\delta_R O_m \cx-\pi(\cn+\ca)\big|_{\va}\big)^{-N}, \label{eq:west}
 \end{align} 
 with $C_N$ independent of $R$,
where we have used that $O_m$ is orthogonal with  $O_m^2=\text{Id}_{\bR^d}$, and that $\delta_R$ and $O_m$ commute as they are diagonal matrices.

 Fix $N>\nu/r$, and let $\cz\in\bR^d$. 
  Put $A_0=\{\cn\in \bN_0^d\colon |\delta_R \cz -\pi(\cn+\ca)|_{\vec{a}}\leq
    1\}$, and for $\ell\in \bN$, we let $A_\ell=\{ \cn\in \bN_0^d\colon  2^{\ell-1}<|\delta_R \cz -\pi(\cn+\ca)|_{\vec{a}}\leq 2^\ell\}$. Notice that $\cup_{\cn\in A_\ell}
    U(R,\cn)$ is a bounded set contained in the ball
    ${B}_{\vec{a}}(\cz,c2^{\ell+1}|R|^{-1/\nu})$. Now, since $|U(R,n)|\asymp |R|^{-1}$ uniformly,
    \begin{align*}
      \sum_{\cn\in A_\ell} &|s_{R,\cn}|\big(1+\big| 
\delta_R\cz-\pi(\cn+\ca)\big|_{\va}\big)^{-N} \\&\leq C2^{-\ell N} \sum_{\cn\in
        A_\ell} |s_{R,\cn}|\\
&      \leq C2^{-\ell N} \Bigl( \sum_{\cn\in
        A_\ell} |s_{R,\cn}|^r\Bigr)^{1/r}\\
      &\leq C2^{-\ell N} |R|^{1/r} \biggl( \int \sum_{\cn\in
        A_\ell} |s_{R,\cn}|^r\mathds{1}_{U(R,\cn)}(y)\, dy \biggr)^{1/r}\\
      &\leq CL^{1-r}2^{-\ell N} |R|^{1/r} \biggl( 
\int_{{B}( \cz,c2^{\ell+1}|R|^{-1/\nu})} \Bigl(\sum_{\cn\in
        A_\ell} |s_{R,\cn}|\mathds{1}_{U(R,\cn)}(y)\Bigr)^r \, dy\biggr)^{1/r}\\
      &\leq C'2^{-\ell(N-\nu/r)} \mathcal{M}_r \Bigl(\sum_{\cn\in
       \bN_0^d} |s_{R,\cn}|\mathds{1}_{U(R,\cn)} \Bigr)(\cz),
    \end{align*}
    with $L$ the uniform constant from Eq.\ \eqref{eq:L1}. 
We now perform the summation over $\ell\in \bN_0$, using that $N-\nu/r>0$, to obtain
$$    \sum_{\cn\in \bN_0^d} |s_{R,\cn}|\big(1+\big| 
\delta_R\cz-\pi(\cn+\ca)\big|\big)^{-N} 
\leq  C\mathcal{M}_r \Bigl(\sum_{\cn\in
       \bN_0^2} |s_{R,\cn}|\mathds{1}_{U(R,\cn)} \Bigr)(\cz).$$
We then use the substitutions $\cz=O_m \cx$, $m=1,\ldots,2^d$, to capture all $2^d$ terms on the RHS of \eqref{eq:west}. 
\end{proof}

\end{document}